\documentclass[12pt]{article}

\usepackage[colorlinks, linkcolor=blue,citecolor=blue]{hyperref}           
\usepackage{color}

\usepackage{graphicx,subfigure,amsmath,amssymb,amsfonts,bm,epsfig,epsf,url,dsfont}
\usepackage{amsthm,mathrsfs}
\usepackage{tikz}
\usepackage{multirow}
\usepackage{bbm}      
\usepackage{booktabs}
\usepackage{cases}
\usepackage{fullpage,enumitem}
\usepackage[small,bf]{caption}
\usepackage[top=1in,bottom=1in,left=1in,right=1in]{geometry}
\usepackage{fancybox}
\usepackage{hyperref}
\usepackage{algorithm}
\usepackage{verbatim}
\usepackage{amsthm}
\usepackage{algpseudocode}
\usepackage{mathtools}

\usepackage{scalerel,stackengine}
\stackMath
\newcommand\reallywidehat[1]{%
\savestack{\tmpbox}{\stretchto{%
0  \scaleto{%
    \scalerel*[\widthof{\ensuremath{#1}}]{\kern-.6pt\bigwedge\kern-.6pt}%
    {\rule[-\textheight/2]{1ex}{\textheight}}%WIDTH-LIMITED BIG WEDGE
  }{\textheight}% 
}{0.5ex}}%
\stackon[1pt]{#1}{\tmpbox}%
}

\newcommand{\ceil}[1]{\left \lceil #1 \right \rceil}
\newcommand{\bE}{\mathbb{E}}

\newtheorem{definition}{Definition}

\newtheorem{theorem}{Theorem}
\newtheorem{corollary}{Corollary}

\newtheorem{lemma}{Lemma}
\newtheorem{prop}{Proposition}
\newtheorem{problem}{Problem}

\newenvironment{fminipage}%
  {\begin{Sbox}\begin{minipage}}%
  {\end{minipage}\end{Sbox}\fbox{\TheSbox}}

\makeatletter
\newcommand*{\rom}[1]{\expandafter\@slowromancap\romannumeral #1@}
\makeatother

\newcommand{\Ind}{\mathbbm{1}}

\newcommand{\sd}{\mu}

\newcommand{\abs}[1]{\left|#1\right|}
 
\newcommand{\N}{\mathbb{N}}

\newcommand{\E}{\mathbb{E}}

\def\P{{\pr}}

\newcommand{\indep}{\perp \!\!\! \perp}

\newcommand {\pr} {\mathbb{P}}

\newcommand{\calA}{{\cal A}}
\newcommand{\calB}{{\cal B}}
\newcommand{\calC}{{\cal C}}

\newcommand{\calE}{{\cal E}}

\newcommand{\calG}{{\cal G}}
\newcommand{\calH}{{\cal H}}

\newcommand{\calK}{{\cal K}}
\newcommand{\calL}{{\cal L}}

\newcommand{\calO}{{\cal O}}

\newcommand{\calR}{{\cal R}}
\newcommand{\calS}{{\cal S}}

\DeclarePairedDelimiterX{\set}[1]{\{}{\}}{\setargs{#1}}
\DeclarePairedDelimiterX{\cond}[1]{[}{]}{\setargs{#1}}
\NewDocumentCommand{\setargs}{>{\SplitArgument{1}{;}}m}
{\setargsaux#1}
\NewDocumentCommand{\setargsaux}{mm}
{\IfNoValueTF{#2}{#1} {#1\,\delimsize|\,\mathopen{}#2}}%{#1\:;\:#2}

\newcommand{\be}{\begin{equation}}
\newcommand{\ee}{\end{equation}}
\newcommand{\beqna}{\begin{eqnarray}}
\newcommand{\eeqna}{\end{eqnarray}}

\newcommand{\p}[1]{\left(#1\right)}
\newcommand{\pp}[1]{\left[#1\right]}
\newcommand{\ppp}[1]{\left\{#1\right\}}

\usepackage{xspace}

\newcommand{\s}[1]{\mathsf{#1}}

\makeatletter
\def\thanks#1{\protected@xdef\@thanks{\@thanks
        \protect\footnotetext{#1}}}
\makeatother

\makeatletter
\renewcommand{\paragraph}{%
  \@startsection{paragraph}{4}%
  {\z@}{1.25ex \@plus 1ex \@minus .2ex}{-1em}%
  {\normalfont\normalsize\bfseries}%
}
\makeatother

\usepackage{varwidth}
\usepackage[most]{tcolorbox}

\usetikzlibrary{arrows.meta,positioning,calc}

\usepackage[most]{tcolorbox}

\newtcolorbox{algobox}{
  colback=white,
  colframe=black,
  boxrule=0.4pt,
  arc=1pt,
  left=6pt,
  right=6pt,
  top=4pt,
  bottom=4pt
}

\allowdisplaybreaks

\begin{document}
%\title{Detecting Planted Subgraphs under Query Constraints}
%\title{Subgraph Detection Query Complexity}
\title{Fundamental Limits of Query-Based Subgraph Detection}
%\title{Query Complexity of Detecting Planted Subgraphs}
\author{Wasim Huleihel\thanks{W. Huleihel is with the School of Electrical Engineering and Computer Engineering, at Tel Aviv University, {T}el {A}viv 6997801, Israel (e-mail:  \texttt{wasimh@tauex.tau.ac.il}).}}

\maketitle

\begin{abstract}
The planted subgraph detection problem asks whether a random graph contains a hidden structured subgraph. In the classical formulation, the entire adjacency matrix is observed and one distinguishes between an Erd\H{o}s--R\'enyi random graph and one obtained by planting a copy of a prescribed graph inside an Erd\H{o}s--R\'enyi random graph. The statistical and computational limits of this problem under full observation are now well understood, even for arbitrary planted subgraphs.

In this paper, we investigate an information-limited version of the problem in which the planted structure is an arbitrary sequence of graphs $\Gamma=(\Gamma_n)_{n\geq1}$, where $\Gamma_n$ is embedded in an ambient graph on $n$ vertices, but the observer does not have access to the full adjacency matrix. Instead, information is acquired through a limited number of non-adaptive edge queries. We study the minimum query complexity required for reliable detection.

We derive general information-theoretic lower bounds and complementary algorithmic upper bounds on the query complexity as functions of the query budget and structural properties of the planted graph. The proposed algorithms exploit three distinct structural mechanisms: dense local motifs, high-degree vertices, and global edge density. We establish matching bounds, up to polylogarithmic factors, for several broad families of planted graphs, including clique-like, bounded-cover, and hub-dominated graph classes. Our framework substantially generalizes existing query-complexity results for planted clique and planted dense subgraph models, providing a unified treatment of arbitrary planted subgraphs under restricted graph access.
\end{abstract}

%\newpage
%\tableofcontents

\section{Introduction}\label{sec:intro}

Identifying hidden structure in random graphs is a central problem in modern statistics, theoretical computer science, and network science. A standard model assumes that one observes a graph on $n$ vertices drawn either from an Erd\H{o}s--R\'enyi random graph or from a modified distribution in which a structured subgraph has been planted. The inferential goal may be either to decide whether such a planted structure is present, or to localize the planted vertices. Over the past three decades, this general paradigm has led to a rich body of work spanning hypothesis testing, estimation, average-case complexity, and algorithm design.

Much of the classical literature has focused on specific planted structures, most notably planted clique, planted dense subgraph, biclique, and related community-detection models. For these problems, the statistical and computational thresholds are by now fairly well understood in many parameter regimes; see, for example, \cite{arias2014community,butucea2013detection,verzelen2015community,chen2016statistical,montanari2015finding,candogan2018finding,hajek2016achieving,ma2015computational,hajek2015computational,brennan2018reducibility}. A recurring theme in this line of work is the appearance of statistical--computational gaps: detection may be information-theoretically possible well before any known polynomial-time algorithm succeeds.

More recently, there has been substantial progress toward a unified theory of planted subgraph inference beyond these ad hoc examples. On the \emph{detection} side, a sequence of works has studied arbitrary planted subgraphs and identified general graph-theoretic quantities that govern the testing problem in several regimes \cite{addario2010combinatorial,Huleihel2022,pmlr-v247-yu24a,elimelech2025detecting,ElimelechHuleihel2025SemiRandom}. In particular, the recent work \cite{elimelech2025detecting} develops a general full-observation theory for detecting arbitrary planted subgraphs in Erd\H{o}s--R\'enyi graphs, establishing sharp statistical and computational thresholds in the dense regime and broad results in sparse and critical regimes as well. These developments suggest that, at least when the full graph is observed, the detection problem is close to being understood at a rather high level of generality.

%The situation is less complete for the \emph{recovery} problem. Recent advances in \cite{pmlr-v195-mossel23a,LeePerniceRajaramanZadik2025} provide a general statistical characterization of weak recovery for arbitrary planted subgraphs through variational formulas for the limiting MMSE under suitable structural assumptions. These results are powerful, but they focus on approximate recovery in a weak sense. Exact recovery, partial recovery under alternative loss functions, and algorithmic aspects of recovery for general planted structures remain much less understood. In particular, unlike the detection problem, there is still no general picture describing when statistically optimal recovery is computationally feasible and when genuine computational barriers arise.

The landscape for the \emph{recovery} problem has also seen significant recent progress. Early works such as \cite{pmlr-v195-mossel23a,LeePerniceRajaramanZadik2025} provided a general statistical characterization of weak recovery for arbitrary planted subgraphs, via variational formulas describing the limiting MMSE under suitable structural assumptions. These results established a unified framework for approximate recovery. More recently, \cite{Huleihel26} develops a general theory for \emph{exact recovery} of arbitrary planted subgraphs in dense Erd\H{o}s-R\'{e}nyi graphs. It identifies sharp necessary and sufficient conditions for exact recovery, characterized by a novel graph-theoretic quantity termed the \emph{minimal maximum subgraph density}. In addition, it provides efficient recovery algorithms and establishes computational lower bounds using the low-degree framework, thereby initiating a systematic study of statistical--computational tradeoffs for recovery in this general setting.

In many applications, however, one does not have access to the full graph. Large networks may be too expensive to inspect exhaustively, privacy or storage constraints may restrict access to edges, and in interactive or crowdsourced settings one may only be allowed to probe selected pairs of items. This naturally leads to a query-limited version of planted subgraph inference: rather than observing the entire adjacency matrix, the algorithm chooses a set of pairs and only learns whether those queried pairs are connected. Understanding how many such queries are needed in order to detect hidden structure is a fundamental question, both for theory and for applications involving graph sampling, active data acquisition, and similarity-based exploration.

This query perspective has already attracted attention in several related settings. In random graphs, adaptive edge-query models were studied for finding large cliques and other target structures; see, e.g., \cite{Feige20,Alweiss2020,Ferber15,Ferber17,Conlon18}. Query models also arise naturally in clustering and community inference, where pairwise comparison or similarity queries serve as a primitive for learning latent labels or partitions \cite{Mazumdar2017ClusteringWN,Mazumdar2017QueryCO,Vinayak2016CrowdsourcedCQ,Hartmann2016ClusteringEN,Anagnostopoulos2016CommunityDO}. Most closely related to our work are the studies of planted clique under query access in \cite{racz2020finding,mardia2020finding,mardia2021space,mardia2024lowdegree,HuleihelMazumdarPal2024QueryPDS}, which characterize the statistical and computational barriers for detecting a planted clique (or, slightly more generally, a planted dense subgraph). Those works already indicate that restricted graph access creates a new layer of difficulty and can magnify statistical--computational tradeoffs.

The goal of the present paper is to develop a general theory of \emph{query complexity for detecting arbitrary planted subgraphs}. We study a model in which the planted structure is an arbitrary graph sequence $\Gamma_n$, embedded uniformly at random into an ambient Erd\H{o}s-R\'{e}nyi graph, and the observer is allowed only a prescribed number of \emph{non-adaptive edge queries}. Thus, the paper may be viewed as bringing together two previously separate directions: the recent full-observation theory for arbitrary planted subgraph detection, and the query-complexity theory developed so far mainly for planted clique and planted dense subgraph models.

Our main objective is to understand how the structural properties of $\Gamma_n$ determine the number of non-adaptive queries required for reliable detection. We derive several general lower bounds, each capturing a different obstruction to detection. The first is a direct ``edge-hit'' lower bound, showing that unless the query set intersects the planted structure often enough, the transcript under the alternative is statistically indistinguishable from that under the null. This bound is especially effective for clique-like or witness-rich graphs, where detection can in principle be triggered by observing even a small dense planted pattern. Our second, more structural lower bound is based on a vertex-cover decomposition of $\Gamma_n$. It separates a cover-induced core from a residual part and shows that if the core is itself undetectable and the residual attachment to the cover is sufficiently diffuse relative to the query profile, then detection remains impossible. This theorem is flexible and leads to sharp consequences for broad sparse families, including bounded-cover graphs, star forests, pendant-leaf constructions, bounded-degree trees, paths, and several bipartite regimes.

On the algorithmic side, we propose two complementary non-adaptive tests. The first is a \emph{witness scan test}, which samples a random vertex subset, queries all edges inside it, and searches for a suitable witness subgraph $H\subseteq \Gamma_n$. This test is effective when $\Gamma_n$ contains many sufficiently dense local patterns, and it recovers the optimal scale, up to polylogarithmic factors, for clique-like and balanced biclique-like structures. The second is a polynomial-time \emph{degree-on-a-cut} test, which samples a bipartite cut and looks for unusually large query degree on one side. Its performance is governed by the high-degree mass
\[
\kappa(\Gamma_n)\triangleq \max_{d\ge 1} m_d(\Gamma_n)d^2,
\qquad
m_d(\Gamma_n)\triangleq \big|\{u\in v(\Gamma_n):\deg_{\Gamma_n}(u)\ge d\}\big|,
\]
and it is near-optimal for hub-dominated families such as stars, star forests, and more generally bounded-cover graphs with a non-negligible high-degree layer. 

Taken together, our results show that query complexity for arbitrary planted subgraph detection is governed by a small collection of structural principles. Dense local witnesses favor scan-based procedures and are well captured by the edge-hit lower bound. Hub-dominated geometries are governed instead by degree exposure across suitably chosen cuts, and are captured by the cover-based lower bounds. This leads to a unified picture in which different algorithmic mechanisms are optimal for different structural classes of planted graphs.

At a broader level, our results reveal that the query model has its own statistical and computational landscape, distinct from the one arising under full observation. In some families, the information-theoretic threshold under query access remains close to the full-observation one, while in others the cost of restricted access is substantial. Likewise, there are regimes in which quasi-optimal non-adaptive detection is possible but the best known polynomial-time algorithms require many more queries. This provides further evidence that active access constraints can generate new statistical--computational tradeoffs even for inference tasks that are already well understood in the classical full-observation setting.

\paragraph{Notation.} We collect here the main notational conventions used throughout the paper. For any finite set $\calS$, we write $|\calS|$ for its cardinality. For $n\in\mathbb{N}$, we denote $[n]\triangleq\{1,\ldots,n\}$, and for any set $\calS$, we let $\binom{\calS}{k}\triangleq\{\calA\subseteq\calS:|\calA|=k\}$. For a subset $\calS\subseteq\mathbb{R}$, we denote by $\Ind\pp{\calS}$ its indicator function.

We denote by $\s{Bern}(p)$ and $\s{Binomial}(n,p)$ the Bernoulli and binomial distributions with success probability $p$, and by $\s{Hypergeometric}(N,K,n)$ the hypergeometric distribution with the usual parameters. For a finite or measurable set $\mathcal{X}$, $\s{Unif}[\mathcal{X}]$ denotes the uniform distribution over $\mathcal{X}$. For random variables $\s{X}$ and $\s{Y}$, we write $\s{X}\indep\s{Y}$ to indicate independence. For probability measures $\mathbb{P}$ and $\mathbb{Q}$, we write $d_{\s{TV}}(\mathbb{P},\mathbb{Q})$ and $\chi^2(\mathbb{P}||\mathbb{Q}),$ for the total variation distance and $\chi^2$-divergence, respectively. In particular, for Bernoulli distributions we use $\chi^2(p||q)=\frac{(p-q)^2}{q(1-q)}$. We will frequently use standard asymptotic notation $O(\cdot),o(\cdot),\Omega(\cdot),\omega(\cdot),\Theta(\cdot)$. We write $a_n\ll b_n$ to indicate that $a_n$ is polynomially smaller than $b_n$, namely $\liminf_{n\to\infty}\log_n a_n < \liminf_{n\to\infty}\log_n b_n$. We write $a_n\lesssim b_n$ if there exists an absolute constant $\s{C}>0$, independent of $n$ and of all problem parameters, such that $a_n\le \s{C}b_n$ for all sufficiently large $n$. The notation $a_n\gtrsim b_n$ is defined analogously. Throughout, $\s{C}$ denotes a generic constant that may change from line to line.

A graph $\s{G}$ is a pair $(v(\s{G}),e(\s{G}))$, where $v(\s{G})$ is the vertex set and $e(\s{G})\subseteq \binom{v(\s{G})}{2}$ is the edge set. We write $|v(\s{G})|$ for the number of vertices and $|e(\s{G})|$ (or $|\s{G}|$) for the number of edges. We say that $\s{H}\subseteq \s{G}$ is a subgraph if $v(\s{H})\subseteq v(\s{G})$ and $e(\s{H})\subseteq e(\s{G})$. When $\s{H}$ has no isolated vertices, we often identify it with its edge set and write $|\s{H}|$ for $|e(\s{H})|$. For a vertex subset $U\subseteq v(\s{G})$, we denote by $\s{G}_U$ the induced subgraph. For a vertex $v\in v(\s{G})$, we denote its neighborhood by $N_{\s{G}}(v)\triangleq \ppp{u\in v(\s{G}):{u,v}\in e(\s{G})}$, and its degree by $\deg_{\s{G}}(v)=|N_{\s{G}}(v)|$. The complete graph on $n$ vertices is denoted by $\calK_n$. Two graphs $\s{G}_1$ and $\s{G}_2$ are said to be isomorphic, denoted $\s{G}_1\cong \s{G}_2$, if there exists a bijection between their vertex sets preserving adjacency. The automorphism group of a graph $\s{G}$ is denoted by $\s{Aut}(\s{G})$. For a graph $\s{G}$ with $|v(\s{G})|\le n$, we denote by $\calS_{\s{G}}$ the collection of all labeled copies of $\s{G}$ in $\calK_n$. It is well known that $|\calS_{\s{G}}| = \binom{n}{|v(\s{G})|}\frac{|v(\s{G})|!}{|\s{Aut}(\s{G})|}$ (see, e.g., \cite[Lemma 5.1]{FriezeKaronski2015}).
Finally, for two graphs $\s{H}$ and $\Gamma$, we denote by $N(\s{H},\Gamma)$ the number of (labeled) copies of $\s{H}$ contained in $\Gamma$. More generally, for $t\in{1,\ldots,|v(\s{H})|-1}$, we let $N_t(\s{H},\Gamma)$ denote the number of ordered pairs $(\s{H}_1,\s{H}_2)$ of copies of $\s{H}$ in $\Gamma$ such that $|v(\s{H}_1)\cap v(\s{H}_2)|=t$. 

\paragraph{Organization.}
The remainder of the paper is organized as follows. In Section~\ref{sec:problem} we formalize the model and the query-based detection problem. Section~\ref{sec:main_results} states our main lower and upper bounds and compares them across different graph families. The general lower bounds are proved in Section~\ref{sec:lower_bounds}, while the algorithmic upper bounds are established in Section~\ref{sec:upper_bounds}. We then specialize the general theory to several representative examples---including cliques, stars, trees, triangle unions, bounded-cover graphs, and complete bipartite graphs---in order to identify regimes in which the lower and upper bounds coincide up to constant or polylogarithmic factors.

\section{Setup}\label{sec:problem}

In this section we describe the setting we plan to study alongside several important preliminaries. Let $\Gamma=(\Gamma_n)_{n\in\N}$ be a sequence of graphs such that for all $ n\in\mathbb{N}$, $\Gamma_n=(v(\Gamma_n),e(\Gamma_n))$ is an \emph{arbitrary} undirected graph without isolated vertices, and $|v(\Gamma_n)|\leq n$. %Throughout the paper $v(\Gamma_n)$ and $e(\Gamma_n)$ denote the set of vertices and edges of $\Gamma_n$, respectively; $|v(\Gamma_n)|\leq n$ and $|e(\Gamma_n)|=|\Gamma_n|$ denote the sizes of thereof. 
We let $\calS_{\Gamma_n}$ be the set of isomorphic copies of $\Gamma_n$ in $\calK_n$.
% a simple combinatorial counting argument reveals that $|\calS_{\Gamma_n}| = \binom{n}{|v(\Gamma_n)|}  \frac{|v(\Gamma_n)|!}{\vert\mathsf{Aut}{\p{\Gamma_n}}\vert}$ (see, e.g., \cite[Lemma 5.1]{FriezeKaronski2015})
We shall refer to $\Gamma_n$ as the \emph{planted/hidden} structure. 

We start with the formulation of the vanilla planted subgraph hypothesis testing problem. Here, under the null hypothesis $\calH_0$, the observed graph $\s{G}$ is an instance from the Erd\H{o}s--R\'enyi random distribution $\calG(n,q)$, with edge density $0<q<1$. Under the alternative hypothesis $\calH_1$, the observed graph $\s{G}$ is the union of an Erd\H{o}s--R\'enyi random graph with a uniform random copy of $\Gamma_n$. To wit, under $\calH_1$ the graph $\s{G}$ on $n$ vertices is constructed as follows: we first draw $\Gamma_n\sim\s{Unif}(\calS_{\Gamma_n})$. Then, each edge of $\Gamma_n$ is included in $\s{G}$ with probability one, while the leftover edges in $\s{G}$ are included with probability $q$. In short, we have the following hypothesis testing problem:
\begin{align}
\calH_0: \s{G} \sim \calG(n,q) \quad \s{vs.} \quad \calH_1 : \s{G} \sim \calG_{\Gamma_n}(n,q),\label{eqn:super_hypo}   
\end{align}
where $\calG_{\Gamma_n}(n,q)$ denotes the ensemble of planted graphs, as defined above. We study the above framework in the asymptotic regime where $n\to \infty$. Throughout, we focus on the dense regime where $q\in(0,1)$ is fixed.

In this work, we consider a variant of the detection problem above, where one can only inspect a small part of the graph by \emph{non-adaptive edge queries}, defined as follows.
\begin{definition}[Oracle/Edge queries]\label{def:oracle}
Consider a graph $\s{G}_n=([n],\calE)$ with $n$ vertices, where $\calE$ denotes the set of edges. An oracle $\calO:[n]\times[n]\to\{0,1\}$, takes as input a pair of vertices $i,j\in[n]\times[n]$, and if $(i,j)\in\calE$, namely, there exists an edges between the chosen vertices, then $\calO(i,j)=1$, otherwise, $\calO(i,j)=0$.
\end{definition}

We consider query mechanisms that evolve dynamically over $\s{Q}$ steps/queries in the following form: in step number $\ell\in[\s{Q}]$, the mechanism chooses a pair of vertices $\s{e}_\ell\triangleq (i_\ell,j_\ell)$ and asks the oracle whether these vertices are connected by an edge or not. Generally speaking, either \emph{adaptive} or \emph{non-adaptive} query mechanisms can be considered. In the former, the chosen $\ell$th pair may depend on the previously chosen pairs $\{\s{e}_i\}_{i<\ell}$, as well as on past responses $\{\calO(\s{e}_i)\}_{i<\ell}$. In non-adaptive mechanisms, on the other hand, all queries must be made upfront. In this paper we focus on non-adaptive mechanisms. The query detection problem is defined as follows.
\begin{problem}[Query-$\s{PDS}$ detection problem]\label{def:PDSquery}
Consider the detection problem in \eqref{eqn:super_hypo}. There is an oracle $\calO$ as defined in Definition~\ref{def:oracle}. Find a set of queries $\mathbb{Q}\subseteq\binom{[n]}{2}$ such that $|\mathbb{Q}|=\s{Q}$, and from the oracle responses it is possible to solve the detection problem in \eqref{eqn:super_hypo}.
\end{problem}

For a given (possibly random) query set $\mathbb{Q}_n\subseteq \binom{[n]}{2}$, we denote by
\begin{align}
\s{G}_{\mathbb{Q}_n} \triangleq (\s{G}_e)_{e\in \mathbb{Q}_n}\in\{0,1\}^{|\mathbb{Q}_n|}
\end{align}
the observed transcript of oracle responses. We let $\pr_{\calH_0,\mathbb{Q}_n}$ and $\pr_{\calH_1,\mathbb{Q}_n}$ denote the distributions of $\s{G}_{\mathbb{Q}_n}$ under the null and alternative hypotheses, respectively. A (possibly randomized) detection algorithm $\calA_n$ is a measurable function
\begin{align}
\calA_n:\{0,1\}^{|\mathbb{Q}_n|}\to\{0,1\},
\end{align}
which, based on the observed transcript $\s{G}_{\mathbb{Q}_n}$, outputs a decision in $\{0,1\}$. The \emph{risk} of $\calA_n$ (with respect to the query set $\mathbb{Q}_n$) is defined as
\begin{align}
\s{R}_n(\calA_n;\mathbb{Q}_n)
\triangleq 
\pr_{\calH_0,\mathbb{Q}_n}\p{\calA_n(\s{G}_{\mathbb{Q}_n})=1}
+
\pr_{\calH_1,\mathbb{Q}_n}\p{\calA_n(\s{G}_{\mathbb{Q}_n})=0},
\end{align}
that is, the sum of the Type-I and Type-II error probabilities. The \emph{optimal risk} associated with the query set $\mathbb{Q}_n$ is
\begin{align}
\s{R}_n^\star(\mathbb{Q}_n)
\triangleq 
\inf_{\calA_n}\s{R}_n(\calA_n;\mathbb{Q}_n).
\end{align}
It is well known (see, e.g., Le Cam's lemma \cite[Theorem 2.2]{tsybakov2004introduction}) that
\begin{align}
\s{R}_n^\star(\mathbb{Q}_n)
=
1 - d_{\s{TV}}\p{\pr_{\calH_1,\mathbb{Q}_n},\pr_{\calH_0,\mathbb{Q}_n}},
\label{eq:opt_risk_tv}
\end{align}
so that the statistical difficulty of the problem is fully characterized by the total variation distance between the two induced transcript distributions. Finally, the \emph{optimal risk under query budget $\s{Q}_n$} is defined by
\begin{align}
\s{R}_{n,\s{Q}_n}^\star
\triangleq
\inf_{\substack{\mathbb{Q}_n\subseteq \binom{[n]}{2}\\ |\mathbb{Q}_n|\le \s{Q}_n}}
\s{R}_n^\star(\mathbb{Q}_n)
=
1-
\sup_{\substack{\mathbb{Q}_n\subseteq \binom{[n]}{2}\\ |\mathbb{Q}_n|\le \s{Q}_n}}
d_{\s{TV}}\p{
\pr_{\calH_1,\mathbb{Q}_n},
\pr_{\calH_0,\mathbb{Q}_n}
}.
\label{eq:budget_opt_risk_tv}
\end{align}
We consider the following types of detection guarantees.
\begin{definition}[Strong and weak detection]\label{def:detection}
Let $\s{Q}_n$ be a sequence of query budgets. We say that a sequence of (possibly randomized) tests $\calA_n$, using query sets $\mathbb{Q}_n$ with $|\mathbb{Q}_n|\le \s{Q}_n$, achieves \emph{strong detection} if $\limsup_{n\to\infty}\s{R}_n(\calA_n;\mathbb{Q}_n)=0$, and achieves \emph{weak detection} if $\limsup_{n\to\infty}\s{R}_n(\calA_n;\mathbb{Q}_n)<1$. Conversely, we say that strong detection is \emph{impossible} under query budget $\s{Q}_n$ if $\liminf_{n\to\infty}\s{R}_{n,\s{Q}_n}^\star>0,$ and weak detection is \emph{impossible} if $\lim_{n\to\infty}\s{R}_{n,\s{Q}_n}^\star=1$.
\end{definition}

Our results will be expressed in terms of the following graph theoretic measures. We let $d_{\s{max}}\p{\Gamma_n}$ denote the maximum degree in $\Gamma_n$, and we define the density of $\Gamma_n$ as $\eta(\Gamma_n)\triangleq  |e(\Gamma_n)|/|v(\Gamma_n)|$. Finally, we recall the following definitions of the \emph{maximum subgraph density} and \emph{vertex cover}.
\begin{definition}[Maximum subgraph density \cite{bollobas_2001}]\label{def:maxsubden}
Let $\s{G}$ be an undirected graph. The maximum subgraph density of $\s{G}$ is
\begin{align}
\sd(\s{G})\triangleq \max\ppp{\eta(\s{H}):\s{H}\subseteq\s{G},\s{H}\neq\emptyset}.\label{eqn:maxDensity}
\end{align}
\end{definition}
\begin{definition}[Vertex cover \cite{elimelech2025detecting}]\label{def:vertex_cover}
Let $\s{G}=(v(\s{G}),e(\s{G}))$ be an undirected graph. A subset of vertices $U \subseteq v(\s{G})$ is called a \emph{vertex cover} of $\s{G}$ if every edge of $\s{G}$ is incident to at least one vertex in $U$, namely, $\forall\{u,v\}\in e(\s{G})$ it holds $\{u,v\}\cap U \neq \emptyset$. The \emph{vertex cover number} of $\s{G}$, denoted by $\tau(\s{G})$, is the minimum size of a vertex cover:
\begin{align}
\tau(\s{G}) \triangleq \min\{|U|: U \subseteq v(\s{G})\; \s{is}\; \s{a}\; \s{vertex}\; \s{cover}\; \s{of}\; \s{G} \}.
\end{align}
\end{definition}

\begin{figure}[t]
\centering

\scalebox{0.8}{\begin{tikzpicture}[
    >=Latex,
    blue node/.style={
        circle,
        draw=blue!50,
        fill=blue!70,
        thick,
        minimum size=2mm,
        inner sep=0pt},
    red node/.style={
        circle,
        draw=red!50,
        fill=red!70,
        thick,
        minimum size=2mm,
        inner sep=0pt},
    blue edge/.style={
        draw=gray!70,
        line width=0.8pt},
    red edge/.style={
        draw=gray!70,
        line width=0.6pt},
    query edge/.style={
        draw=black,dashed,
        line width=1pt},
    qmark/.style={
        fill=white,
        inner sep=0.5pt,
        font=\bfseries\footnotesize},
    box/.style={
        draw,
        rounded corners,
        thick,
        minimum width=2.5cm,
        minimum height=8mm,
        align=center,
        fill=gray!8}
]

%%%%%%%%%%%%%%%%%%%%%%%%%%%%%%%%%%%%%%%%%%%%%%%%%%%%%%%%%%%%
%% LEFT SIDE : GRAPH
%%%%%%%%%%%%%%%%%%%%%%%%%%%%%%%%%%%%%%%%%%%%%%%%%%%%%%%%%%%%

%----------------------------------------------------------
% Planted left partition
%----------------------------------------------------------

\node[blue node] (a1) at (-2.3,  2.2) {};
\node[blue node] (a2) at (-2.3,  1.3) {};
\node[blue node] (a3) at (-2.3,  0.4) {};
\node[blue node] (a4) at (-2.3, -0.5) {};
\node[blue node] (a5) at (-2.3, -1.4) {};
\node[blue node] (a6) at (-2.3, -2.3) {};

%----------------------------------------------------------
% Planted right partition
%----------------------------------------------------------

\node[blue node] (b1) at ( 2.3,  2.2) {};
\node[blue node] (b2) at ( 2.3,  1.3) {};
\node[blue node] (b3) at ( 2.3,  0.4) {};
\node[blue node] (b4) at ( 2.3, -0.5) {};
\node[blue node] (b5) at ( 2.3, -1.4) {};
\node[blue node] (b6) at ( 2.3, -2.3) {};

%----------------------------------------------------------
% Complete planted bipartite graph
%----------------------------------------------------------

\foreach \i in {1,...,6}
{
    \foreach \j in {1,...,6}
    {
        \draw[blue edge] (a\i)--(b\j);
    }
}

%%%%%%%%%%%%%%%%%%%%%%%%%%%%%%%%%%%%%%%%%%%%%%%%%%%%%%%%%%%%
%% Background vertices
%%%%%%%%%%%%%%%%%%%%%%%%%%%%%%%%%%%%%%%%%%%%%%%%%%%%%%%%%%%%

\node[red node] (r1)  at (-5.4,  3.4) {};
\node[red node] (r2)  at (-4.7,  2.3) {};
\node[red node] (r3)  at (-5.3,  1.1) {};
\node[red node] (r4)  at (-4.8, -0.3) {};
\node[red node] (r5)  at (-5.5, -1.7) {};
\node[red node] (r6)  at (-4.6, -3.0) {};

\node[red node] (r7)  at (-0.7,  3.7) {};
\node[red node] (r8)  at ( 0.9,  3.4) {};
\node[red node] (r9)  at ( 4.7,  3.2) {};
\node[red node] (r10) at ( 5.5,  2.0) {};
\node[red node] (r11) at ( 4.8,  0.8) {};
\node[red node] (r12) at ( 5.5, -0.6) {};
\node[red node] (r13) at ( 4.7, -2.0) {};
\node[red node] (r14) at ( 5.4, -3.3) {};

\node[red node] (r15) at ( 0.2,-3.8) {};
\node[red node] (r16) at (-1.3,-3.6) {};

%%%%%%%%%%%%%%%%%%%%%%%%%%%%%%%%%%%%%%%%%%%%%%%%%%%%%%%%%%%%
%% Background graph
%%%%%%%%%%%%%%%%%%%%%%%%%%%%%%%%%%%%%%%%%%%%%%%%%%%%%%%%%%%%

%%%%%%%%%%%%%%%%%%%%%%%%%%%%%%%%%%%%%%%%%%%%%%%%%%%%%%%%%%%%
%% Dense deterministic background graph
%%%%%%%%%%%%%%%%%%%%%%%%%%%%%%%%%%%%%%%%%%%%%%%%%%%%%%%%%%%%

% Left cluster
\draw[red edge] (r1)--(r2);
\draw[red edge] (r1)--(r3);
\draw[red edge] (r1)--(r4);
\draw[red edge] (r2)--(r3);
\draw[red edge] (r2)--(r5);
\draw[red edge] (r3)--(r4);
\draw[red edge] (r3)--(r6);
\draw[red edge] (r4)--(r5);
\draw[red edge] (r5)--(r6);
\draw[red edge] (r2)--(r6);

% Top
\draw[red edge] (r7)--(r8);
\draw[red edge] (r7)--(r9);
\draw[red edge] (r8)--(r10);
\draw[red edge] (r8)--(r11);

% Right cluster
\draw[red edge] (r9)--(r10);
\draw[red edge] (r9)--(r11);
\draw[red edge] (r9)--(r12);
\draw[red edge] (r10)--(r11);
\draw[red edge] (r10)--(r13);
\draw[red edge] (r10)--(r14);
\draw[red edge] (r11)--(r12);
\draw[red edge] (r11)--(r13);
\draw[red edge] (r12)--(r13);
\draw[red edge] (r12)--(r14);
\draw[red edge] (r13)--(r14);

% Bottom
\draw[red edge] (r15)--(r16);
\draw[red edge] (r14)--(r15);
\draw[red edge] (r13)--(r15);
\draw[red edge] (r6)--(r16);

% Long-range edges
\draw[red edge] (r2)--(r8);
\draw[red edge] (r3)--(r7);
\draw[red edge] (r4)--(r15);
\draw[red edge] (r5)--(r11);
\draw[red edge] (r6)--(r12);
\draw[red edge] (r7)--(r10);
\draw[red edge] (r8)--(r13);
\draw[red edge] (r3)--(r10);
\draw[red edge] (r4)--(r9);
\draw[red edge] (r1)--(r15);
\draw[red edge] (r5)--(r13);
\draw[red edge] (r2)--(r11);
\draw[red edge] (r6)--(r14);
\draw[red edge] (r8)--(r12);
\draw[red edge] (r7)--(r11);
\draw[red edge] (r9)--(r15);
\draw[red edge] (r10)--(r16);
\draw[red edge] (r5)--(r9);
\draw[red edge] (r1)--(r8);
\draw[red edge] (r4)--(r13);

%%%%%%%%%%%%%%%%%%%%%%%%%%%%%%%%%%%%%%%%%%%%%%%%%%%%%%%%%%%%
%% Sparse connections to planted graph
%%%%%%%%%%%%%%%%%%%%%%%%%%%%%%%%%%%%%%%%%%%%%%%%%%%%%%%%%%%%

\draw[red edge] (r2)--(a1);
\draw[red edge] (r4)--(a2);
\draw[red edge] (r6)--(a3);
\draw[red edge] (r5)--(a4);
\draw[red edge] (r3)--(a5);
\draw[red edge] (r1)--(a6);

\draw[red edge] (r7)--(a2);
\draw[red edge] (r8)--(a4);

\draw[red edge] (r8)--(b1);
\draw[red edge] (r9)--(b2);
\draw[red edge] (r10)--(b3);
\draw[red edge] (r11)--(b4);
\draw[red edge] (r12)--(b5);
\draw[red edge] (r13)--(b6);

\draw[red edge] (r14)--(b2);
\draw[red edge] (r15)--(b4);
\draw[red edge] (r16)--(b6);

\draw[red edge] (r5)--(b2);
\draw[red edge] (r3)--(b5);
\draw[red edge] (r6)--(b1);
\draw[red edge] (r2)--(b6);

\draw[red edge] (r11)--(a1);
\draw[red edge] (r13)--(a3);
\draw[red edge] (r10)--(a5);
\draw[red edge] (r15)--(a6);

%%%%%%%%%%%%%%%%%%%%%%%%%%%%%%%%%%%%%%%%%%%%%%%%%%%%%%%%%%%%
%% QUERIED EDGES
%%%%%%%%%%%%%%%%%%%%%%%%%%%%%%%%%%%%%%%%%%%%%%%%%%%%%%%%%%%%

% inside planted graph
% inside planted
\draw[query edge] (a1)--(b2);
\draw[query edge] (a3)--(b4);
\draw[query edge] (a5)--(b6);
\draw[query edge] (a1)--(b6);
\draw[query edge] (a4)--(b6);
\draw[query edge] (a4)--(b2);
\draw[query edge] (a4)--(b3);
\draw[query edge] (a5)--(b6);

% inside background
\draw[query edge] (r1)--(r5);
\draw[query edge] (r3)--(r10);
\draw[query edge] (r8)--(r13);
\draw[query edge] (r1)--(r6);
\draw[query edge] (r6)--(r10);
\draw[query edge] (r5)--(r9);
\draw[query edge] (r5)--(r10);
\draw[query edge] (r5)--(r2);
\draw[query edge] (r5)--(r3);
\draw[query edge] (r9)--(r1);
\draw[query edge] (r7)--(r9);
\draw[query edge] (r1)--(r2);
\draw[query edge] (r1)--(r4);
\draw[query edge] (r1)--(r8);
\draw[query edge] (r13)--(r8);
\draw[query edge] (r13)--(r2);
\draw[query edge] (r15)--(r5);
\draw[query edge] (r15)--(r9);

% planted-background
\draw[query edge] (a2)--(r7);
\draw[query edge] (b5)--(r15);
\draw[query edge] (a6)--(r16);

%%%%%%%%%%%%%%%%%%%%%%%%%%%%%%%%%%%%%%%%%%%%%%%%%%%%%%%%%%%%
%% Question marks
%%%%%%%%%%%%%%%%%%%%%%%%%%%%%%%%%%%%%%%%%%%%%%%%%%%%%%%%%%%%

\node[qmark] at ($(a1)!0.55!(b2)+(0.18,0.18)$) {?};
\node[qmark] at ($(a3)!0.5!(b4)+(-0.18,0.18)$) {?};
\node[qmark] at ($(a5)!0.45!(b6)+(0.15,-0.18)$) {?};

\node[qmark] at ($(r1)!0.5!(r3)+(-0.18,0.12)$) {?};
\node[qmark] at ($(r10)!0.5!(r12)+(0.18,0.15)$) {?};

\node[qmark] at ($(a2)!0.5!(r7)+(-0.15,0.18)$) {?};
\node[qmark] at ($(b5)!0.5!(r13)+(0.18,-0.15)$) {?};
\node[qmark] at ($(a6)!0.5!(r16)+(-0.15,-0.15)$) {?};

%%%%%%%%%%%%%%%%%%%%%%%%%%%%%%%%%%%%%%%%%%%%%%%%%%%%%%%%%%%%
%% ORACLE DIAGRAM
%%%%%%%%%%%%%%%%%%%%%%%%%%%%%%%%%%%%%%%%%%%%%%%%%%%%%%%%%%%%

\end{tikzpicture}}

\caption{Illustration of the edge-query model. The graph contains a planted complete bipartite subgraph supported on the blue vertices, while the remaining vertices are colored red. Dashed black edges denote the queried vertex pairs, and gray edges denote unqueried edges. The algorithm selects all queried pairs in advance, before observing any responses from the edge oracle.}
\label{fig:recovery-figs}
\end{figure}

\section{Main results}\label{sec:main_results}

In this section we present our main results. We begin with general statistical lower bounds that apply to any planted graph sequence $(\Gamma_n)$, followed by general upper bounds achieved by concrete algorithms. We then summarize how these bounds compare across different structural regimes. 

\subsection{Statistical lower bounds}

Our first result is a universal lower bound based only on the total number of edges.
\begin{theorem}[Edge-hit lower bound]
\label{thm:edge_hit_lower_bound}
Fix $q\in(0,1)$, and let $(\Gamma_n)_{n\ge 1}$ be any sequence of graphs with
$|v(\Gamma_n)|\le n$. If
\begin{align}
\s{Q}_n=o\p{\frac{n^2}{|e(\Gamma_n)|}},
\label{eq:edge_hit_condition}
\end{align}
then weak detection is impossible with query budget $\s{Q}_n$.
\end{theorem}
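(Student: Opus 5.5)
Fix an arbitrary query set $\mathbb{Q}_n$ with $|\mathbb{Q}_n|\le \s{Q}_n$. By \eqref{eq:budget_opt_risk_tv}, it suffices to show that $d_{\s{TV}}(\pr_{\calH_1,\mathbb{Q}_n},\pr_{\calH_0,\mathbb{Q}_n})\to0$ uniformly over such query sets.

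The plan is a coupling, or ``edge-hit'', argument. We realize both hypotheses on a common probability space. Draw $\s{G}_0\sim\calG(n,q)$ and an independent uniform copy $\Gamma_n\sim\s{Unif}(\calS_{\Gamma_n})$, and set $\s{G}_1=\s{G}_0\cup\Gamma_n$. Then $\s{G}_1\sim\calG_{\Gamma_n}(n,q)$. To see this, note that edges of $\Gamma_n$ are present with probability one, and the remaining edges are independent $\s{Bern}(q)$ independent of $\Gamma_n$. On the event
\[
\calE_n\triangleq\{e(\Gamma_n)\cap\mathbb{Q}_n=\emptyset\},
\]
the transcripts $(\s{G}_1)_{\mathbb{Q}_n}$ and $(\s{G}_0)_{\mathbb{Q}_n}$ coincide. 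The coupling inequality then gives
\[
d_{\s{TV}}\p{\pr_{\calH_1,\mathbb{Q}_n},\pr_{\calH_0,\mathbb{Q}_n}}\le \pr\p{(\s{G}_1)_{\mathbb{Q}_n}\neq(\s{G}_0)_{\mathbb{Q}_n}}\le\pr(\calE_n^c).
\]

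The key step is to bound $\pr(\calE_n^c)$ by a union bound over queries:
\[
\pr(\calE_n^c)\le\sum_{e\in\mathbb{Q}_n}\pr(e\in e(\Gamma_n)).
\]
The random copy is uniform over $\calS_{\Gamma_n}$, and this set is invariant under relabelings by the symmetric group on $[n]$. The symmetric group acts transitively on $\binom{[n]}{2}$, so $\pr(e\in e(\Gamma_n))$ is the same for every pair $e$. Summing over all pairs gives $\E|e(\Gamma_n)|=|e(\Gamma_n)|$, and therefore
\[
\pr(e\in e(\Gamma_n))=\frac{|e(\Gamma_n)|}{\binom n2}.
\]
It follows that
\[
d_{\s{TV}}\le \frac{\s{Q}_n|e(\Gamma_n)|}{\binom n2}=O\p{\frac{\s{Q}_n|e(\Gamma_n)|}{n^2}}=o(1)
\]
by \eqref{eq:edge_hit_condition}.

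This bound does not depend on $\mathbb{Q}_n$ beyond its cardinality, so the supremum over admissible query sets also tends to zero, and $\s{R}^\star_{n,\s{Q}_n}\to1$. If randomized query sets are allowed, the same bound holds by conditioning on $\mathbb{Q}_n$, since $\mathbb{Q}_n$ is chosen independently of the graph (non-adaptivity). No step is a real obstacle; the only point needing care is the symmetry argument establishing the per-edge marginal, which relies on the uniformity over labeled copies.
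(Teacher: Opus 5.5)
Your proposal is correct and follows essentially the same route as the paper: bound the total variation by the probability that some queried pair is a planted edge, then use a union bound together with the symmetry identity $\pr[e\in e(\Gamma^\star)]=|e(\Gamma_n)|/\binom{n}{2}$. The only cosmetic difference is that you build an explicit coupling $\s{G}_1=\s{G}_0\cup\Gamma_n$. The paper instead observes that the $\calH_1$ transcript, conditioned on the no-hit event, has exactly the null law. The two arguments are equivalent.
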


The proof of Theorem~\ref{thm:edge_hit_lower_bound} is based on a simple but fundamental observation: unless the query set $\mathbb{Q}$ intersects the planted edges, the two models are statistically indistinguishable. Indeed, under the alternative, the only difference from the null model occurs on the planted edges. If none of the queried edges belongs to the planted copy $\Gamma^\star$, then all queried edges are distributed as $\s{Bern}(q)$ exactly as under the null. Therefore, conditioned on the event that $\mathbb{Q}\cap e(\Gamma^\star)=\emptyset$, the observed transcript has the same distribution under $\calH_0$ and $\calH_1$. The probability of observing at least one planted edge is at most $\s{Q}_n |e(\Gamma_n)|/\binom{n}{2}$, since each queried edge hits the planted subgraph with probability $|e(\Gamma_n)|/\binom{n}{2}$. When $\s{Q}_n \ll n^2/|e(\Gamma_n)|$, this probability tends to zero, meaning that with high probability the algorithm never ``touches'' the planted structure at all. In this regime, no test can perform better than random guessing.

Roughly speaking, the edge-hit lower bound is tight (up to polylogarithmic factors) whenever detection can be achieved by identifying a single planted edge. This occurs when the graph $\Gamma_n$ contains many dense local structures, or more generally when there exists a subgraph $H_n\subseteq \Gamma_n$ that appears sufficiently frequently and can be detected via brute-force scanning; this will be made precise in the upper bounds section. Typical examples include clique-like or biclique-like graphs, where dense substructures allow detection as soon as one planted region is queried. In contrast, for hub-dominated or bounded-cover graphs, detection requires aggregating information across many edges, and the edge-hit bound is no longer tight. This motivates the need for a more refined lower bound that captures structural properties beyond the total number of edges. To this end, we next present a general structural lower bound based on a vertex-cover decomposition of $\Gamma_n$. 

To present this bound, we introduce a few important notations. For each $n$, choose a vertex cover $U_n\subseteq v(\Gamma_n)$, and denote by $\tau_n\triangleq |U_n|$ its size. Define $W_n\triangleq v(\Gamma_n)\setminus U_n$ and $H_n\triangleq \Gamma_n[U_n]$ to be a \emph{cover-induced subgraph} of $\Gamma_n$ on the vertex cover $U_n$. 
For each $w\in W_n$, define its neighborhood inside the chosen cover by
\begin{align}
N_n(w)\triangleq N_{\Gamma_n}(w)\cap U_n.
\end{align}
For each integer $r\in\{1,\ldots,\tau_n\}$, define
\begin{align}
\Theta_{r,n}
\triangleq
\sum_{w,w'\in W_n}
\binom{|N_n(w)\cap N_n(w')|}{r}.
\label{eq:Theta_r_cover_reduction_equiv}
\end{align}
For each $n$, let $\mathbb{Q}_n\subseteq \binom{[n]}{2}$ be a deterministic non-adaptive query set of size $\s{Q}_n\triangleq |\mathbb{Q}_n|$. For each vertex $i\in[n]$, define its query degree
\begin{align}
t_i^{(n)}(\mathbb{Q}_n)
\triangleq 
\abs{\{j\in[n]\setminus\{i\}:\{i,j\}\in\mathbb{Q}_n\}},
\end{align}
and for each integer $r\in\{1,\ldots,\tau_n\}$ set
\begin{align}
\zeta_{r,n}(\mathbb{Q}_n)
\triangleq
\sum_{i=1}^n
\frac{(t_i^{(n)})_r}{(n)_r},
\label{eq:zeta_r_cover_reduction}
\end{align}
where $(a)_r\triangleq a(a-1)\cdots(a-r+1)$ denotes the falling factorial. Finally, define an auxiliary planted model $\pr_{H_n,\mathbb{Q}_n}$ as follows: sample a uniformly random injective map $\varphi_n:U_n\hookrightarrow [n]$, and plant only the graph $H_n$ on the image $\varphi_n(U_n)$, and then draw every remaining ambient edge independently as $\s{Bern}(q)$. We have the following result.
\begin{comment}
\begin{theorem}[Cover-based lower bound]
\label{thm:cover_reduction_query_lb}
Fix $q\in(0,1)$, let $(\Gamma_n)_{n\ge 1}$ be a sequence of finite simple graphs, and $H_n$ be a cover induced subgraph of $\Gamma_n$ on a vertex cover $U_n\subset v(\Gamma_n)$, as defined above. Let $\mathbb{Q}_n\subseteq \binom{[n]}{2}$ be a deterministic non-adaptive query set of size $\s{Q}_n\triangleq |\mathbb{Q}_n|$. Assume that
\begin{align}
d_{\s{TV}}(\pr_{H_n,\mathbb{Q}_n},\pr_{\calH_0,\mathbb{Q}_n})&\to 0,\label{eq:assump_core_undetectable}\\
\frac{1}{(n-\tau_n)^2}
\sum_{r=1}^{\tau_n}
(q^{-1}-1)^r
\Theta_{r,n}\zeta_{r,n}
&\to 0,
\label{eq:assump_remainder_negligible}
\end{align}
as $n\to\infty$. Then $d_{\s{TV}}(\pr_{\calH_1,\mathbb{Q}_n},\pr_{\calH_0,\mathbb{Q}_n})\to 0$, and weak detection is impossible.
\end{theorem}
\end{comment}
\begin{theorem}[Core reduction via vertex covers]
\label{thm:cover_reduction_query_lb}
Fix $q\in(0,1)$, let $(\Gamma_n)_{n\ge 1}$ be a sequence of finite simple graphs,
and let $H_n$ be the cover-induced subgraph of $\Gamma_n$ on a vertex cover
$U_n\subseteq v(\Gamma_n)$, as defined above. Let $\s{Q}_n$ be a query budget. Assume that
\begin{align}
&\sup_{\mathbb{Q}_n\subseteq \binom{[n]}{2}:\ |\mathbb{Q}_n|\le \s{Q}_n}
d_{\s{TV}}\p{
\pr_{H_n,\mathbb{Q}_n},
\pr_{\calH_0,\mathbb{Q}_n}
}
\to 0,\label{eq:assump_core_undetectable}\\
&\sup_{\mathbb{Q}_n\subseteq \binom{[n]}{2}:\ |\mathbb{Q}_n|\le \s{Q}_n}
\frac{1}{(n-\tau_n)^2}
\sum_{r=1}^{\tau_n}
(q^{-1}-1)^r
\Theta_{r,n}\zeta_{r,n}(\mathbb{Q}_n)
\to 0.
\label{eq:assump_remainder_negligible}
\end{align}
Then weak detection is impossible with $\s{Q}_n$ non-adaptive queries.
%\begin{align}
%\sup_{\mathbb{Q}_n\subseteq \binom{[n]}{2}:\ |\mathbb{Q}_n|\le \s{Q}_n}
%d_{\s{TV}}\p{
%\pr_{\calH_1,\mathbb{Q}_n},
%\pr_{\calH_0,\mathbb{Q}_n}
%}
%\to 0.
%\end{align}
%Consequently,
%\begin{align}
%\s{R}_{n,\s{Q}_n}^{\star}\to 1,
%\end{align}
%and weak detection is impossible with $\s{Q}_n$ non-adaptive queries.
\end{theorem}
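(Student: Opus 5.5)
Fix a deterministic query set $\mathbb{Q}_n$ with $|\mathbb{Q}_n|\le \s{Q}_n$. By \eqref{eq:budget_opt_risk_tv} it suffices to show $d_{\s{TV}}(\pr_{\calH_1,\mathbb{Q}_n},\pr_{\calH_0,\mathbb{Q}_n})\to0$ uniformly over such sets. We first use the triangle inequality,
\[
d_{\s{TV}}(\pr_{\calH_1,\mathbb{Q}_n},\pr_{\calH_0,\mathbb{Q}_n})\le d_{\s{TV}}(\pr_{\calH_1,\mathbb{Q}_n},\pr_{H_n,\mathbb{Q}_n})+d_{\s{TV}}(\pr_{H_n,\mathbb{Q}_n},\pr_{\calH_0,\mathbb{Q}_n}).
\]
The second term vanishes by \eqref{eq:assump_core_undetectable}. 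For the first term we condition on the embedding of the core. Write a uniformly random copy of $\Gamma_n$ as a pair $(\varphi,\psi)$, where $\varphi:U_n\hookrightarrow[n]$ and $\psi:W_n\hookrightarrow[n]\setminus\varphi(U_n)$. Conditionally on $\varphi$, $\varphi$ is uniform and $\psi$ is uniform. Then $\pr_{\calH_1}=\E_\varphi\,\pr_{\calH_1\mid\varphi}$ and $\pr_{H_n}=\E_\varphi\,\pr_{H_n\mid\varphi}$, and convexity of TV gives
\[
d_{\s{TV}}(\pr_{\calH_1},\pr_{H_n})\le \E_\varphi\, d_{\s{TV}}(\pr_{\calH_1\mid\varphi},\pr_{H_n\mid\varphi}).
\]
Given $\varphi$, both models fix the core edges $\varphi(H_n)$ and make all other queried pairs i.i.d.\ $\s{Bern}(q)$. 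The only difference is that $\pr_{\calH_1\mid\varphi}$ additionally plants the edges $\{\varphi(u),\psi(w)\}$ for $w\in W_n$ and $u\in N_n(w)$. Since $W_n$ is independent in $\Gamma_n$ (because $U_n$ is a cover), these are the only extra edges. Next we bound TV by $\sqrt{\chi^2}$ with reference measure $\pr_{H_n\mid\varphi}$. On the coordinates not fixed by the core, the reference is a product of $\s{Bern}(q)$; the fixed core coordinates are identical in both measures and drop out.

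We then apply the Ingster second-moment trick. With two independent copies $\psi,\psi'$, the likelihood ratio for a single planted edge $e$ is $\Ind[e\in\mathbb{Q}]\,\s{G}_e/q$. This gives
\[
1+\chi^2=\E_{\psi,\psi'}\,q^{-|\mathbb{Q}_n\cap E_\varphi(\psi)\cap E_\varphi(\psi')\setminus \varphi(H_n)|}.
\]
Here $E_\varphi(\psi)$ is the set of cross edges. A shared cross edge has the form $\{\varphi(u),x\}$ with $x=\psi(w)=\psi'(w')$ and $u\in N_n(w)\cap N_n(w')$. The exponent therefore decomposes over pairs $(w,w')$ with $\psi(w)=\psi'(w')$, where each pair contributes $|\{u\in N_n(w)\cap N_n(w'):\{\varphi(u),x\}\in\mathbb{Q}_n\}|$.

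The main obstacle is controlling this exponential moment, because the collisions $\psi(w)=\psi'(w')$ are dependent. I would handle it in three steps.
\begin{enumerate}[label=(\roman*)]
\item Dominate the dependence. Use negative association of uniform injections, or a Poissonization/coupling argument, to bound the expectation by $\prod_{w,w'}\E[\cdot]$ as though the collision events were independent, each with probability about $1/(n-\tau_n)$ for $\psi$ and the same for $\psi'$. This is where the factor $(n-\tau_n)^{-2}$ should come from: a collision at a common target $x$ requires both $\psi(w)=x$ and $\psi'(w')=x$.
\item Expand each factor. For a fixed pair $(w,w')$ with common neighborhood $S=N_n(w)\cap N_n(w')$, the factor is $1+\sum_x (n-\tau_n)^{-2}(q^{-|A_x|}-1)$, where $A_x=\{u\in S:\{\varphi(u),x\}\in\mathbb{Q}_n\}$. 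Writing $q^{-|A|}-1=\sum_{r\ge1}(q^{-1}-1)^r\binom{|A|}{r}$, we then need to average $\sum_x\binom{|A_x|}{r}$ over $\varphi$.
\item Average over $\varphi$. For each $x$, the quantity $\binom{|A_x|}{r}$ counts $r$-subsets of $S$ that $\varphi$ maps into the query neighborhood of $x$. This has expectation $\binom{|S|}{r}(t_x)_r/(n)_r$, up to negligible corrections. Summing over $x$ gives $\binom{|S|}{r}\zeta_{r,n}$.
\end{enumerate}
Multiplying the factors and using $\log(1+a)\le a$ yields
\[
1+\chi^2\le\exp\Big(\frac{1+o(1)}{(n-\tau_n)^2}\sum_r(q^{-1}-1)^r\Theta_{r,n}\zeta_{r,n}\Big).
\]
This tends to $1$ by \eqref{eq:assump_remainder_negligible}.

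One point needs care: I took the expectation over $\varphi$ inside the exponential. To justify this, I would either apply Jensen/Cauchy–Schwarz the other way, bounding $\E_\varphi\sqrt{\chi^2_\varphi}\le\sqrt{\E_\varphi\chi^2_\varphi}$ and then controlling $\E_\varphi\exp(\cdot)$, or first truncate on a high-probability event for $\varphi$ on which every $|A_x|$ is bounded. I expect this to work because the condition \eqref{eq:assump_remainder_negligible} already sums over all $r$.
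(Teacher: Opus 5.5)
Your overall plan is the paper's: a triangle inequality through $\pr_{H_n,\mathbb{Q}_n}$, conditioning on the embedding $\varphi$ of $U_n$, a conditional second moment for the attachment layer, and an exponential bound whose exponent has $\varphi$-mean of order $(n-\tau_n)^{-2}\sum_r(q^{-1}-1)^r\Theta_{r,n}\zeta_{r,n}$. However, the two steps that carry the difficulty do not work as you propose them.

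\textbf{Step (i): the decoupling.} The indicators $\Ind\{\psi(w)=x\}$ of a uniform injection are \emph{not} negatively associated. Collisions at distinct targets are positively correlated: $\pr[\psi(w_1)=x_1,\psi(w_2)=x_2]=1/(m(m-1))>1/m^2$, with $m=n-\tau_n$. The product-over-pairs bound you want is in fact false. For a planted matching $\{u_jw_j\}_{j\le k}$ under full observation, $1+\chi^2_\varphi=\sum_{s}\binom{k}{s}c^s/(m)_s$ with $c=q^{-1}-1$. This strictly exceeds your $(1+c/m)^k=\sum_s\binom{k}{s}c^s/m^s$ once $k\ge 2$.

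The fix, which is what the paper does, is to factor over \emph{target vertices} $x$ rather than over pairs $(w,w')$. The paper phrases this as $1+\chi^2=\sum_S c^{|S|}p_A(S)^2$ plus a union bound on $p_A(S)$. By injectivity, at most one pair collides at each $x$, so $q^{-X}=\prod_x\bigl(1+\sum_{w,w'}\Ind\{\psi(w)=\psi'(w')=x\}(q^{-a_{w,w'}(x)}-1)\bigr)$, where $a_{w,w'}(x)$ is your $|A_x|$ for the pair $(w,w')$. Expand this and use $\pr[\psi(w_j)=x_j,\ j\le s]=1/(m)_s\le (e/m)^s$. This gives $1+\chi^2_\varphi\le\prod_x\bigl(1+\tfrac{e^2}{m^2}\sum_{\emptyset\neq R\subseteq N_A(x)}c^{|R|}|\mathcal W_n(R)|^2\bigr)$, with $N_A(x)=\{u\in U_n:\{\varphi(u),x\}\in\mathbb{Q}_n\}$. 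That is your exponent with $e^2$ in place of $1+o(1)$, which is harmless.

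\textbf{The averaging over $\varphi$.} You leave this step open, and neither of your suggestions works. Take a single star $K_{1,k}$ with $k=n^{0.9}$, budget $\s{Q}_n=n-1$, and $\mathbb{Q}_n=\{\{i_0,j\}:j\neq i_0\}$. Both hypotheses hold: the core is a single vertex, and the left side of \eqref{eq:assump_remainder_negligible} is about $2ck^2/n^2\to 0$. But on the event $\{\varphi(u)=i_0\}$, which has probability $1/n$, the conditional $\chi^2$ equals $\bE[(1+c)^X]-1$ with $X$ hypergeometric of mean about $n^{0.8}$. Hence $\bE_\varphi\chi^2_\varphi\to\infty$, so Cauchy--Schwarz over $\varphi$, or any control of $\bE_\varphi e^{\eta_n Z_\varphi}$, is hopeless. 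Truncating on bounded $|A_x|$ also removes nothing here, since $|A_x|\le\tau_n=1$ always.

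What is needed is Markov's inequality applied to the exponent itself. Write the bound above as $1+\chi^2_\varphi\le e^{\eta_n Z_\varphi}$ with $\eta_n=e^2/m^2$, and extend the sum over $x$ to all of $[n]$ so that $\bE_\varphi[\eta_nZ_\varphi]=\Lambda_n\to0$ exactly. The event $\{\eta_nZ_\varphi>\sqrt{\Lambda_n}\}$ then has probability at most $\sqrt{\Lambda_n}$, and on it you use $d_{\s{TV}}\le 1$. Off that event, $d_{\s{TV}}\le\frac12(e^{\sqrt{\Lambda_n}}-1)^{1/2}$. Convexity of $d_{\s{TV}}$ over the mixture in $\varphi$ then concludes.
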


Theorem~\ref{thm:cover_reduction_query_lb} has several noteworthy features. The theorem is based on a structural decomposition of the planted graph $\Gamma_n$ via a vertex cover $U_n$. The vertex cover isolates a \emph{core subgraph} $H_n=\Gamma_n[U_n]$, while the remaining vertices $W_n=v(\Gamma_n)\setminus U_n$ form an independent set. Consequently, every edge of $\Gamma_n$ is either contained in the core or connects a vertex in $W_n$ to the core. The theorem compares the original planted model to an auxiliary model in which only the core $H_n$ is planted, while all edges incident to $W_n$ are generated according to the null model. The first condition, \eqref{eq:assump_core_undetectable}, requires that the core itself be statistically indistinguishable from the null under the available query budget. The second condition, \eqref{eq:assump_remainder_negligible}, quantifies the additional information carried by the attachment layer. Here, $\Theta_{r,n}$ measures the amount of overlap between neighborhoods of vertices in $W_n$, while $\zeta_{r,n}$ measures the ability of the query mechanism to simultaneously probe $r$ vertices of the core. Their weighted combination therefore captures the total second-moment contribution of the attachment layer.

Theorem~\ref{thm:cover_reduction_query_lb} should therefore be viewed as a \emph{reduction principle} rather than as a standalone lower bound. It separates the problem of proving undetectability into two simpler tasks: establishing that a suitably chosen core $H_n$ is itself undetectable, and showing that the remaining attachment layer contributes negligibly. This viewpoint makes it possible to combine lower bounds tailored to different graph families within a single unified framework. For example, clique-like or biclique-like cores can be handled using the edge-hit lower bound, whereas for hub-dominated graph families the core is often trivial and the attachment layer completely determines the statistical threshold. More generally, different lower-bound techniques may be used to verify the first condition, while the second condition provides a universal characterization of the contribution of the attachment layer. As will be demonstrated later, this decomposition can yield substantially sharper lower bounds than the direct edge-hit argument, and in many examples an appropriate choice of the vertex cover eliminates all but a few terms in the sum defining \eqref{eq:assump_remainder_negligible}, leading to simple and nearly tight impossibility results.

At first sight, condition~\eqref{eq:assump_remainder_negligible} may appear difficult to verify because it depends on the query mechanism through the quantities $\zeta_{r,n}(\mathbb{Q}_n)$. Fortunately, a simple combinatorial argument shows that $\zeta_{r,n}(\mathbb{Q}_n)\le 2\s{Q}_n/n$ for every $r$ and every query set $\mathbb{Q}_n$. Consequently, \eqref{eq:assump_remainder_negligible} admits a simple sufficient condition depending only on graph-theoretic parameters of $\Gamma_n$ and the query budget $\s{Q}_n$, completely eliminating the optimization over $\mathbb{Q}_n$. This simplified form is presented in the supplementary material and is the version used throughout the corollaries and examples in the remainder of the paper.

\subsection{Upper bounds}\label{subsec:queryAlg}

We now present three detection procedures, each tailored to a different structural feature of the planted graph $\Gamma_n$. All procedures are non-adaptive. We start with the \emph{witness scan test}, which attempts to detect a fixed subgraph $H\subseteq \Gamma_n$ by querying a random subset of vertices and checking whether the induced queried graph contains a copy of $H$; see Algorithm~\ref{alg:witness_scan}. We have the following result.
%\paragraph{Witness-scan procedure.}
%Given a query budget $Q$ and a witness graph $H$ with $v=|v(H)|$, let
%\begin{align}
%M \triangleq \max\ppp{m\in\mathbb{N}:\binom{m}{2}\le Q}.
%\end{align}
%Sample a subset $S\subseteq [n]$ uniformly at random with $|S|=M$, and query all $\binom{M}{2}$ edges inside $S$. Let $\s{G}_S$ be the induced queried subgraph. The test outputs $1$ if $\s{G}_S$ contains a copy of $H$, and outputs $0$ otherwise.
\begin{theorem}[Scan test upper bound]
\label{thm:scan_upper}
Fix $q\in(0,1)$ and let $H_n\subseteq \Gamma_n$ be a sequence of witness subgraphs with $v_n\triangleq |v(H_n)|$ and $e_n\triangleq |e(H_n)|$. Let $\mathbb{Q}_n$ be the query set generated by Algorithm~\ref{alg:witness_scan}, and define
\begin{align}
M_n\triangleq \max\ppp{m\in\mathbb{N}:\binom{m}{2}\le \s{Q}_n}.
\end{align}
Denote
\begin{align}
\mu_n
&\triangleq
N(H_n,\Gamma_n)\frac{(M_n)_{v_n}}{(n)_{v_n}},\\
\Delta_n
&\triangleq
\sum_{t=1}^{v_n-1}
N_t(H_n,\Gamma_n)\frac{(M_n)_{2v_n-t}}{(n)_{2v_n-t}}.
\end{align}
If the following conditions hold:
\begin{align}
M_n^{v_n}q^{e_n}\to 0,
\qquad
\mu_n\to\infty,
\qquad
\Delta_n=o(\mu_n^2),
\end{align}
then the non-adaptive scan test $\calA_{\s{scan}}$ achieves strong detection, i.e., $\s{R}_n(\calA_{\s{scan}};\mathbb{Q}_n)\to 0$, as $n\to\infty$.
\end{theorem}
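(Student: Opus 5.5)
The proof controls the two error types separately. The test $\calA_{\s{scan}}$ samples $S\subseteq[n]$ uniformly with $|S|=M_n$, queries all pairs inside $S$, and outputs $1$ iff the observed graph $\s{G}_S$ contains a copy of $H_n$. Type-I error is bounded by a first-moment union bound under $\calH_0$. Type-II error is bounded by a second-moment (Paley--Zygmund/Chebyshev) argument that counts planted copies of $H_n$ falling entirely inside $S$, using only planted edges.

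\textbf{Type-I error.} Under $\calH_0$, $\s{G}_S\sim\calG(M_n,q)$. The number of labeled copies of $H_n$ in $\calK_{M_n}$ is at most $(M_n)_{v_n}\le M_n^{v_n}$, and each is present with probability $q^{e_n}$. Markov's inequality then gives
\[
\pr_{\calH_0}(\text{copy of } H_n\text{ in }\s{G}_S)\le M_n^{v_n}q^{e_n}\to 0 .
\]

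\textbf{Type-II error.} Under $\calH_1$, every edge of the planted copy $\Gamma^\star$ is present. It therefore suffices to show that, with probability $1-o(1)$, some copy of $H_n$ inside $\Gamma^\star$ has all its vertices in $S$. By symmetry, $\Gamma^\star$ can be taken as a fixed labeled copy of $\Gamma_n$ with $S$ uniformly random; alternatively, keep $S$ fixed and randomize the embedding, which is equivalent.

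Let $Z=\sum_{H'}\Ind\{v(H')\subseteq S\}$, where the sum runs over the $N(H_n,\Gamma_n)$ copies $H'$ of $H_n$ in $\Gamma^\star$. For a fixed set of $k$ vertices,
\[
\pr(\text{the set lies in } S)=\frac{\binom{n-k}{M_n-k}}{\binom{n}{M_n}}=\frac{(M_n)_k}{(n)_k}.
\]
Hence $\E Z=\mu_n$. For the second moment, split the ordered pairs $(H_1,H_2)$ by the overlap $t=|v(H_1)\cap v(H_2)|$:
\[
\E Z^2=\sum_{t=0}^{v_n}\sum_{|v(H_1)\cap v(H_2)|=t}\frac{(M_n)_{2v_n-t}}{(n)_{2v_n-t}} .
\]
The terms split three ways. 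The $t=v_n$ terms are pairs of copies on the same vertex set, and they contribute at most $c\,\mu_n$, where $c$ is a bounded count of copies per vertex set. More carefully, I would work with vertex sets instead: let $Z$ count the distinct vertex sets supporting copies, or simply note that $t=v_n$ pairs with $H_1=H_2$ contribute exactly $\mu_n$. The $1\le t\le v_n-1$ terms contribute exactly $\Delta_n$. The $t=0$ terms satisfy $\frac{(M_n)_{2v}}{(n)_{2v}}\le\big(\frac{(M_n)_v}{(n)_v}\big)^2$, since $\frac{M-j}{n-j}$ is decreasing in $j$ for $M\le n$; so they contribute at most $\mu_n^2$.

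Chebyshev then gives
\[
\pr(Z=0)\le\frac{\operatorname{Var}Z}{\mu_n^2}\le \frac{\mu_n+\Delta_n+R_n}{\mu_n^2}\to0,
\]
where $R_n$ collects the $t=v_n$, $H_1\neq H_2$ pairs.

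\textbf{Main obstacle.} The delicate point is the $t=v_n$ term with $H_1\ne H_2$, meaning distinct copies on the same vertex set; the hypotheses in the statement do not explicitly control it. I would handle it by redefining $Z$ to count vertex sets $A\in\binom{v(\Gamma^\star)}{v_n}$ for which $\Gamma^\star[A]$ contains a copy of $H_n$. The test still succeeds when $Z>0$. The overlap-$v_n$ term then becomes at most $\E Z\le\mu_n$, while $\E Z\ge \mu_n/v_n!$ and the overlap $1\le t<v_n$ terms remain bounded by $\Delta_n$. This costs only a factor depending on $v_n!$. If $v_n$ grows, I would instead interpret $N_t$ with $t$ up to $v_n$ as implicitly covering this case, or absorb it via $\mu_n\to\infty$ together with the copies-per-set ratio. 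Either way, the conclusion is $\pr_{\calH_1}(\text{no copy})\to0$, and therefore $\s{R}_n\to0$.
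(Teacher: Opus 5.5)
Your Type-I bound is the paper's first-moment bound. For Type-II you take a different route. The paper invokes a Janson-type inequality for the uniform $M$-subset (Lemma~\ref{lem:janson_hypergeom}). You instead use Chebyshev with the exact second moment, and control disjoint pairs through $(M)_{2v}/(n)_{2v}\le((M)_v/(n)_v)^2$, which is exactly the negative correlation of sampling without replacement. Apart from the issue below, both routes need only $\mu_n\to\infty$ and $\Delta_n=o(\mu_n^2)$. Janson would add an exponential rate, which is irrelevant for $\s{R}_n\to 0$. Your argument has the advantage of being self-contained: the appendix proof of the Janson lemma moves the expectation inside the exponential using Jensen's inequality in the wrong direction.

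The genuine gap is the one you flagged, and it cannot be closed for the statement as written. Your repair is fine for bounded $v_n$. More directly: if $\rho_n$ is the largest number of copies of $H_n$ on a single $v_n$-subset of $v(\Gamma_n)$, then $R_n\le(\rho_n-1)\mu_n$. Your sentences about growing $v_n$, however, are not an argument. Letting $N_t$ run up to $t=v_n$ strengthens the hypothesis rather than interpreting it. The paper's proof has the identical hole: the $\Delta$ of Lemma~\ref{lem:janson_hypergeom} includes every pair $c\neq c'$ with $B_c\cap B_{c'}\neq\emptyset$, yet \eqref{eq:Delta_t} keeps only $t\le v-1$.

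In fact the statement is false. Take the following:
\begin{itemize}
\item $v_n=\lceil C\log n\rceil$;
\item $H_n$ connected on $v_n$ vertices, with trivial $\s{Aut}(H_n)$ and $e_n\ge v_n^2/4$ (a typical sample of $\calG(v_n,3/4)$ works);
\item $\Gamma_n$ the disjoint union of $\lfloor n/v_n\rfloor$ copies of $\calK_{v_n}$;
\item $M_n=\lceil 3n/v_n\rceil$.
\end{itemize}
The hypotheses hold. Every copy of $H_n$ spans a whole component, so $N_t(H_n,\Gamma_n)=0$ for $1\le t\le v_n-1$ and $\Delta_n=0$. Each component carries $v_n!$ copies, so $\mu_n\to\infty$ because $v_n!\ge(v_n/e)^{v_n}$. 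Finally $M_n^{v_n}q^{e_n}\to 0$ once $C\log(1/q)>4$.

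Yet the test fails under $\calH_1$. Each component meets $S$ in about $3$ vertices. A union bound, $n\binom{v_n}{\lceil v_n/4\rceil}(M_n/n)^{\lceil v_n/4\rceil}\to 0$, shows that with high probability every component meets $S$ in at most $v_n/4$ vertices. Then every $v_n$-subset of $S$ spans at most $v_n^2/8$ planted edges. So every embedding of $H_n$ into $\s{G}_S$ needs at least $v_n^2/8$ independent background edges, and $\pr_{\calH_1}(\calA_{\s{scan}}=1)\le o(1)+M_n^{v_n}q^{v_n^2/8}\to 0$ once $C\log(1/q)>8$. The Type-II error therefore tends to $1$.

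An extra hypothesis is needed, for example $\rho_n=o(\mu_n)$, or $\Delta_n$ redefined to include ordered pairs of distinct copies with $t=v_n$. With either one, your bound $\pr(Z=0)\le\mu_n^{-1}+(\Delta_n+R_n)\mu_n^{-2}$ goes to zero. The extra hypothesis holds automatically when $v_n$ is bounded. It also holds in all of the paper's applications (cliques inside cliques, $K_{s,s}$ inside $K_{a,b}$), where each vertex set carries at most one copy.
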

The scan test succeeds by finding a \emph{planted witness} inside the queried subgraph. The first condition controls false positives under the null (requiring $H_n$ to be sufficiently dense), while the second ensures that at least one planted copy of $H_n$ falls entirely inside the queried set. The third condition guarantees that different planted copies do not overlap too heavily. Ignoring lower-order effects, the scan test operates at the scale
\begin{align}
\s{Q}_n \asymp \frac{n^2}{N(H_n,\Gamma_n)^{2/v_n}},
\end{align}
and is therefore most effective when $\Gamma_n$ contains many dense substructures. In particular, for clique-like or biclique-like graphs, the scan test matches the edge-hit lower bound up to polylogarithmic factors.

We next present the \emph{degree-on-a-cut} test, a polynomial-time algorithm that exploits high-degree vertices; see Algorithm~\ref{alg:degcut_unsat}. We have the following result.
\begin{theorem}[Degree-on-a-cut upper bound]
\label{thm:degree_cut_general_unsat_main}
Fix $q\in(0,1)$ and let $\Gamma_n$ be a sequence of graphs. For $d\ge 1$, define
\begin{align}
m_d(\Gamma_n)
&\triangleq \big|\{u\in v(\Gamma_n):\deg_{\Gamma_n}(u)\ge d\}\big|,
\\
\kappa(\Gamma_n)
&\triangleq \max_{d\ge 1} m_d(\Gamma_n) d^2.
\end{align}
Fix $d\in\{1,\dots,d_{\s{max}}(\Gamma_n)\}$ and define
\begin{align}
m \triangleq \left\lceil C_1\frac{n^2}{d^2}\log n\right\rceil,
\quad
M \triangleq \min\left\{
n,\left\lceil C_2\frac{n}{m_d(\Gamma_n)}\log n\right\rceil
\right\},
\quad
\s{Q}_{\s{cut}}\triangleq Mm,
\label{eq:degcut_params_unsat}
\end{align}
where $C_1,C_2>0$ are sufficiently large constants depending only on $q$. Assume the feasibility conditions
\begin{align}
m\le \frac{n}{2}
\qquad\s{and}\qquad
\frac{|v(\Gamma_n)|\,m}{n}\ge 10\log n.
\label{eq:degcut_feas_unsat}
\end{align}
Then there exists a non-adaptive query set $\mathbb{Q}_n$ of size $|\mathbb{Q}_n|\le \s{Q}_{\s{cut}}$ such that the degree-on-a-cut test $\calA_{\s{cut}}$ in Algorithm~\ref{alg:degcut_unsat} satisfies $\s{R}_n(\calA_{\s{cut}};\mathbb{Q}_n)\to 0$, as $n\to\infty$. 
\end{theorem}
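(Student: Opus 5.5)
We have not seen Algorithm~\ref{alg:degcut_unsat}, so we fix a concrete version of it and prove the theorem for that version. Sample uniformly a set $A\subseteq[n]$ with $|A|=M$ and, disjoint from it, a set $B\subseteq[n]\setminus A$ with $|B|=m$. Query all $Mm$ pairs in $A\times B$; this uses $\s{Q}_{\s{cut}}$ queries. For each $a\in A$ let $D_a$ be its observed degree into $B$. The test outputs $1$ iff
\[
\max_{a\in A}D_a\ge qm+\tau,\qquad \tau\triangleq \tfrac{1}{2}(1-q)\,\frac{d\,m}{n}.
\]
Note that $dm/n\approx C_1 (n/d)\log n$. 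The plan is to show that the signal $(1-q)dm/n$ carried by a planted hub dominates the null fluctuation $\sqrt{m\log n}$.

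\textbf{Step 1 (null).} Under $\calH_0$ each $D_a$ is $\s{Binomial}(m,q)$. Hoeffding's inequality gives
\[
\pr(D_a\ge qm+\tau)\le \exp(-2\tau^2/m).
\]
Since $d\le n$, we have $m\ge C_1\log n$, and therefore
\[
\frac{\tau^2}{m}=\frac{(1-q)^2 d^2 m}{4n^2}\ge \frac{(1-q)^2C_1\log n}{4}.
\]
A union bound over the at most $n$ vertices of $A$ then makes the Type-I error $o(1)$ once $C_1$ is large in terms of $q$.

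\textbf{Step 2 (alternative, hub in $A$).} Let $S$ be the set of planted vertices of degree at least $d$; then $|S|=m_d(\Gamma_n)$. Since $A$ is a uniform $M$-set, $|A\cap S|$ is hypergeometric with mean $M m_d/n\ge C_2\log n$, unless $M=n$, in which case $S\subseteq A$ trivially. So with probability $1-n^{-\Omega(C_2)}$ some hub $u\in S$ lies in $A$.

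\textbf{Step 3 (alternative, hub degree).} Condition on $u\in A$. The set $B$ is uniform of size $m$ in $[n]\setminus A$. Among the at least $d$ planted neighbours of $u$, at most $M$ can fall in $A$.

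This is the main obstacle: $M$ could exceed $d$, and then $u$ might have essentially no planted neighbours outside $A$. The planned fix is to sample $B$ first and $A$ independently from $[n]$, discarding $A\cap B$, so that $B$ is a uniform $m$-subset of $[n]$ independent of $u$'s neighbourhood. Alternatively one can use the symmetry of the random embedding.

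With that independence, the number $X$ of planted neighbours of $u$ in $B$ is hypergeometric with mean at least $dm/n$. Because $dm/n\gtrsim C_1\log n$, $X$ concentrates: $X\ge(1-\epsilon)dm/n$ with high probability. The remaining $m-X$ pairs from $u$ into $B$ are independent $\s{Bern}(q)$, with the caveat that edges to other planted vertices only increase the count, so we may lower bound them by Bernoulli variables. Hence
\[
D_u\ge X+\s{Binomial}(m-X,q)\ge qm+(1-q)X-O\big(\sqrt{m\log n}\big).
\]
This exceeds $qm+\tau$, using $(1-q)(1-\epsilon)dm/n-\tau\ge \tfrac14(1-q)dm/n\gg\sqrt{m\log n}$, which is the same computation as in Step 1.

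\textbf{Where the feasibility conditions enter.} The condition $m\le n/2$ guarantees that $A$ and $B$ can be chosen disjoint. The condition $|v(\Gamma_n)|m/n\ge 10\log n$ guarantees that $B$ meets the planted vertex set in $\Omega(\log n)$ vertices with high probability, which supports the concentration in Step 3. The main care is thus in Step 3: arranging the independence of $B$ from the hub's location while keeping the query count at most $Mm$ and the cut disjoint.
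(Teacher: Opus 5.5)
Your argument is essentially the paper's proof. Algorithm~\ref{alg:degcut_unsat} samples the $m$-set $S$ first and then $U\subseteq[n]\setminus S$ with $|U|=M$, which gives exactly the same uniform law on disjoint pairs as your order ($A$, then $B\subseteq[n]\setminus A$). Its analysis also mirrors your Steps 1--3: Bernstein plus a union bound under $\calH_0$, and under $\calH_1$ a hypergeometric bound placing a degree-$\ge d$ vertex in $U$, a hypergeometric count of its planted neighbours in $S$, and a binomial lower tail.

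The Step~3 ``obstacle'' is illusory, and the algorithm needs no modification. The embedding is uniform and independent of the cut, so given the hub's image $i\in A$, its neighbours' images form a uniform subset of $[n]\setminus\{i\}$. Their number in $B$ is therefore $\s{Hypergeometric}(n-1,\deg,m)$ with mean at least $dm/n\ge C_1\log n$, however large $A$ is. This direct count is slightly cleaner than the paper's two-stage route through $|S\cap K|$, and it shows that the second feasibility condition is not actually needed.
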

The DegreeCut algorithm detects vertices whose degree toward a random subset is significantly larger than expected under the null. This method is particularly effective for \emph{hub-dominated} structures, such as stars or bounded-cover graphs, where a small number of vertices carry a large fraction of the edges. In these regimes, DegreeCut matches the structural lower bounds up to logarithmic factors. However, for dense homogeneous graphs (e.g., cliques), it is suboptimal.

To better appreciate Theorem~\ref{thm:degree_cut_general_unsat_main} let us extract a closed-form condition for the query complexity of DegreeCut algorithm. For a fixed $d\in\{1,\dots,d_{\s{max}}(\Gamma_n)\}$, the query budget is
\begin{align}
\s{Q}_{\s{cut}}(d)
\lesssim
\frac{n^3\log^2 n}
{d^2\max\{m_d(\Gamma_n),\log n\}},
\label{eq:degcut_saturated_budget}
\end{align}
subject to $d^2 \gtrsim n\log n$ (which follows from the constraint that $m\leq n/2$). Equivalently, define the saturated high-degree mass
\begin{align}
\kappa(\Gamma_n)
\triangleq
\max_{\substack{1\le d\leq d_{\max}(\Gamma_n)\ d^2\gtrsim n\log n}}
d^2\max\{m_d(\Gamma_n),\log n\}.
\end{align}
Then the degree-on-a-cut upper bound is
\begin{align}
\s{Q}_n
\gtrsim
\frac{n^3\log^2 n}{\kappa_{\mathrm{sat}}(\Gamma_n)}
\end{align}
as a sufficient condition for strong detection, provided the maximizing degree level also satisfies the remaining feasibility condition in Theorem~\ref{thm:degree_cut_general_unsat_main}.

Finally, we propose the \emph{edge count} test, defined as follows. Let $\mathbb{Q}_n$ be a set of $\s{Q}$ edges drawn uniformly at random from $\binom{[n]}{2}$. Define the statistic
\begin{align}
\s{X}_{\s{count}}\triangleq \sum_{e\in\mathbb{Q}_n}\mathds{1}\{e\in e(\s{G})\},
\end{align}
and consider the test $\calA_{\s{count}}\triangleq \mathds{1}\{\s{X}_{\s{count}}\ge q\s{Q} + \lambda\sqrt{q(1-q)\s{Q}}\}$, for any $\lambda=\lambda_n\to\infty$. 
\begin{theorem}[Edge-count]\label{thm:edgecount_query}
If
\begin{align}
\chi^2(p||q)\cdot \s{Q}\cdot \p{\frac{|e(\Gamma)|}{\binom{n}{2}}}^2\to\infty,
\label{eq:edgecount_snr_thm}
\end{align}
then $\s{R}_n(\calA_{\s{count}};\mathbb{Q}_n)\to0$, as $n\to\infty$.
\end{theorem}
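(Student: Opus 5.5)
Throughout, $p$ denotes the edge probability on planted edges; in the present model $p=1$, and the argument below works verbatim for any $p>q$. Write $\rho_n\triangleq |e(\Gamma_n)|/\binom{n}{2}$. The plan is a first-and-second moment (Chebyshev) argument for $\s{X}_{\s{count}}$, computed jointly over the randomness of the graph and of the uniformly random query set $\mathbb{Q}_n$. I will take $\mathbb{Q}_n$ to be a uniformly random $\s{Q}$-subset of $\binom{[n]}{2}$, independent of $\s{G}$.

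\textbf{Null hypothesis.} Under $\calH_0$ the queried entries are i.i.d.\ $\s{Bern}(q)$ regardless of $\mathbb{Q}_n$, so $\s{X}_{\s{count}}\sim\s{Binomial}(\s{Q},q)$. Chebyshev then gives a Type-I error of at most $1/\lambda^2\to0$.

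\textbf{Alternative hypothesis: mean.} Condition on the planted copy $\Gamma^\star$. Let $\s{K}\triangleq|\mathbb{Q}_n\cap e(\Gamma^\star)|$, which is $\s{Hypergeometric}(\binom n2,|e(\Gamma_n)|,\s{Q})$ with mean $\s{Q}\rho_n$ and variance at most $\s{Q}\rho_n$. Given $\s{K}$, we have $\s{X}_{\s{count}}=\s{Binomial}(\s{K},p)+\s{Binomial}(\s{Q}-\s{K},q)$, so
\[
\E\s{X}_{\s{count}}=q\s{Q}+(p-q)\s{Q}\rho_n .
\]

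\textbf{Alternative hypothesis: variance.} By the law of total variance,
\[
\mathrm{Var}(\s{X}_{\s{count}})\le \s{Q}/4+(p-q)^2\mathrm{Var}(\s{K})\lesssim \s{Q}+\s{Q}\rho_n\lesssim \s{Q}.
\]

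\textbf{Type-II error.} The test fails under $\calH_1$ only if $\s{X}_{\s{count}}-\E\s{X}_{\s{count}}\le \lambda\sqrt{q(1-q)\s{Q}}-(p-q)\s{Q}\rho_n$. The key claim is that the hypothesis \eqref{eq:edgecount_snr_thm} says exactly $(p-q)\s{Q}\rho_n/\sqrt{q(1-q)\s{Q}}\to\infty$. Indeed, $\chi^2(p||q)\,\s{Q}\rho_n^2=\big((p-q)\s{Q}\rho_n\big)^2/\big(q(1-q)\s{Q}\big)$, which is the square of that ratio. Since $\lambda_n$ is free apart from $\lambda\to\infty$, choose it to grow slowly, for instance $\lambda_n=\tfrac12$ of the square root of the SNR in \eqref{eq:edgecount_snr_thm}. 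Then the deviation needed is at least $\tfrac12(p-q)\s{Q}\rho_n$, and Chebyshev bounds the Type-II error by
\[
\frac{4\,\mathrm{Var}(\s{X}_{\s{count}})}{(p-q)^2\s{Q}^2\rho_n^2}\lesssim \frac{1}{\mathrm{SNR}}\to0 .
\]
This constant-factor bookkeeping is routine.

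\textbf{Main obstacle.} The only delicate point is the requirement on $\lambda$. The statement says ``for any $\lambda\to\infty$,'' but the result is only true if $\lambda=o(\sqrt{\mathrm{SNR}})$. I would make this explicit by requiring $\lambda_n\to\infty$ with $\lambda_n^2=o\big(\chi^2(p||q)\,\s{Q}\rho_n^2\big)$. The other point to check carefully is that the variance of $\s{K}$ coming from the random query set does not dominate, which the bound $\mathrm{Var}(\s{K})\le\s{Q}\rho_n\le\s{Q}$ above settles.
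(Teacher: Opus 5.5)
Your proof is correct, but it takes a different route from the paper. The paper works conditionally on $(\Gamma_n^\star,\mathbb{Q}_n)$. It applies Bernstein's inequality to $\s{X}_{\s{count}}$ given the number $Z_n$ of queried planted edges, restricts to the good event $(p-q)Z_n\ge 2\lambda_n\sqrt{q(1-q)\s{Q}_n}$, and controls that event with a Chernoff bound for the hypergeometric $Z_n$. That last step tacitly uses $\s{Q}\rho_n\ge \s{Q}\rho_n^2\to\infty$, with your $\rho_n$. You instead average over the planted copy and the random query set at once, using only first and second moments. The law of total variance shows that the fluctuation of $\s{K}$ adds at most $(p-q)^2\s{Q}\rho_n=O(\s{Q})$ to the $O(\s{Q})$ binomial term, so one Chebyshev bound suffices. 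You need no good-event decomposition and no hypergeometric tail lemma. The paper's extra machinery buys exponentially small errors, of order $e^{-c\lambda_n^2}+e^{-c\s{Q}\rho_n}+e^{-c\,\mathrm{SNR}}$, where $\mathrm{SNR}$ is the left-hand side of \eqref{eq:edgecount_snr_thm}. Your argument gives the polynomial rate $O(\lambda_n^{-2}+\mathrm{SNR}^{-1})$, which is enough for $\s{R}_n\to0$.

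Your caveat about $\lambda_n$ matches the paper, which likewise picks $\lambda_n\to\infty$ ``sufficiently slowly'' inside its proof. One small inconsistency: you say the result holds only if $\lambda_n=o(\sqrt{\mathrm{SNR}})$, yet your own choice $\lambda_n=\tfrac12\sqrt{\mathrm{SNR}}$ violates this and still works. The right requirement is that the standardized margin $\sqrt{\mathrm{SNR}}-\lambda_n$ diverge, since the alternative standard deviation is $O(\sqrt{\s{Q}})$. For example, $\lambda_n\le c\sqrt{\mathrm{SNR}}$ for a fixed $c<1$ suffices.
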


As we will show in the next section, each of these algorithms is optimal (up to polylogarithmic factors) in a different regime.

\begin{algorithm}[t!]
\caption{\textsc{Witness-Scan}$(Q,H)$}\label{alg:witness_scan}
\begin{algorithmic}[1]
\Require Budget $Q$, witness graph $H$ with $v=|v(H)|$ and $e=|e(H)|$
\State Set $M \gets \max\{m\in\mathbb N:\binom{m}{2}\leq Q\}$
\State Sample $S\subseteq[n]$ uniformly at random subject to $|S|=M$
\State Query all pairs $\{i,j\}\subseteq S$; let $\s{G}_S$ be the induced queried graph on $S$
\If{$\s{G}_S$ contains a copy of $H$ as a subgraph}
    \State \Return $1$
\Else
    \State \Return $0$
\EndIf
\end{algorithmic}
\end{algorithm}

\begin{algorithm}[t!]
\caption{\textsc{DegreeCut}$(M,m,d)$}
\label{alg:degcut_unsat}
\begin{algorithmic}[1]
\State Sample $S\subset[n]$ uniformly at random with $|S|=m$.
\State Sample $U\subset[n]\setminus S$ uniformly at random with $|U|=M$, independently of $S$.
\State Query all pairs $(u,s)$ with $u\in U$ and $s\in S$ (total $Mm$ queries).
\State For each $u\in U$ compute $D(u)\triangleq \sum_{s\in S}\mathds{1}\{(u,s)\in e(\s{G})\}$.
\State Set the threshold $T \triangleq qm + \frac{(1-q)dm}{8n}$.
\State Output $1$ if $\max_{u\in U}D(u)\ge T$; otherwise output $0$.
\end{algorithmic}
\end{algorithm}

\subsection{Corollaries and examples}

We now illustrate the general lower and upper bounds on several natural graph families. These examples show that the upper bounds are each optimal in different structural regimes.

We begin with a convenient simplification of Theorem~\ref{thm:cover_reduction_query_lb}. A simple combinatorial bound shows that $\zeta_{r,n}\le 2\s{Q}_n/n$ for every $r$, and therefore the weighted remainder condition in Theorem~\ref{thm:cover_reduction_query_lb} may often be reduced to a more explicit expression.
\begin{corollary}
\label{cor:weighted_cover_reduction_simplified_main}
Consider the same setting in Theorem~\ref{thm:cover_reduction_query_lb}. Then the condition in \eqref{eq:assump_remainder_negligible} can be replaced with
\begin{align}
\frac{\s{Q}_n}{n(n-\tau_n)^2}
\sum_{r=1}^{\tau_n}
(q^{-1}-1)^r\Theta_{r,n}
\to 0.\label{eq:assump_weighted_simplified}
\end{align}
\end{corollary}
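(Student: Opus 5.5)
The plan is to show that, for every query set $\mathbb{Q}_n$ with $|\mathbb{Q}_n|\le \s{Q}_n$ and every $r\in\{1,\ldots,\tau_n\}$,
\begin{align}
\zeta_{r,n}(\mathbb{Q}_n)\le \frac{2\s{Q}_n}{n}.
\end{align}
Once this holds, each summand in \eqref{eq:assump_remainder_negligible} satisfies
\[
(q^{-1}-1)^r\Theta_{r,n}\zeta_{r,n}(\mathbb{Q}_n)\le (q^{-1}-1)^r\Theta_{r,n}\cdot 2\s{Q}_n/n ,
\]
since all factors are nonnegative ($q\in(0,1)$ and $\Theta_{r,n}\ge 0$). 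This bound is uniform in $\mathbb{Q}_n$, so the supremum in \eqref{eq:assump_remainder_negligible} is at most twice the left-hand side of \eqref{eq:assump_weighted_simplified}. Hence \eqref{eq:assump_weighted_simplified} implies \eqref{eq:assump_remainder_negligible}, and Theorem~\ref{thm:cover_reduction_query_lb} applies unchanged together with \eqref{eq:assump_core_undetectable}.

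For the combinatorial bound, fix $i$ and $r\ge 1$. If $t_i<r$, then $(t_i)_r=0$. Otherwise $t_i\le n-1<n$, so
\[
\frac{(t_i)_r}{(n)_r}=\prod_{j=0}^{r-1}\frac{t_i-j}{n-j}.
\]
Every factor lies in $[0,1]$, because $t_i-j\le n-j$ and $t_i-j\ge 0$ when $t_i\ge r$. Bounding the product by its first factor gives $(t_i)_r/(n)_r\le t_i/n$. Summing over $i$ and using the handshake identity $\sum_{i}t_i^{(n)}=2|\mathbb{Q}_n|\le 2\s{Q}_n$ yields $\zeta_{r,n}(\mathbb{Q}_n)\le 2\s{Q}_n/n$.

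There is no substantive obstacle here. The only points needing care are that each factor $(t_i-j)/(n-j)$ is at most one, which uses $t_i\le n-1$, and that the bound does not depend on $r$. That independence is what lets us pull $2\s{Q}_n/n$ out of the whole sum uniformly, regardless of the choice of $\mathbb{Q}_n$.
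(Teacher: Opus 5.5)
Your proposal is correct and follows the paper's argument essentially verbatim. The paper proves the same uniform bound $\zeta_{r,n}(\mathbb{Q}_n)\le 2\s{Q}_n/n$ (Lemma~\ref{lem:zeta_basic_bound}) via $(t_i^{(n)})_r\le t_i^{(n)}(n-1)_{r-1}$, which is exactly your step of bounding the product by its first factor, and then pulls $2\s{Q}_n/n$ out of the sum so that \eqref{eq:assump_weighted_simplified} implies \eqref{eq:assump_remainder_negligible}.
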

This is the main tool used below. In most examples, one chooses a vertex cover $U_n$ so that the induced core $H_n$ is extremely simple (often edgeless or a matching), and then computes the quantities $\Theta_{r,n}$ explicitly. In many cases only the first one or two terms in the sum over $r$ survive.

Before proceeding, it is worth noting that in the full-observation regime, where $\mathbb{Q}n=\binom{[n]}{2}$, the three query algorithms introduced in Subsection~\ref{subsec:queryAlg} reduce to the corresponding optimal full-observation tests. In particular, the sufficient conditions in Theorems~\ref{thm:scan_upper}--\ref{thm:edgecount_query} simplify to
\begin{align}
\mu(\Gamma_n)=\Omega(\log n)\quad\s{or}\quad|e(\Gamma_n)|=\omega(n)\quad\s{or}\quad d_{\s{max}}(\Gamma_n)=\Omega(\sqrt{n\log n}).
\end{align}
These coincide (up to constant factors) with the (tight) statistical and computational thresholds established for the full-observation model in \cite{elimelech2025detecting}.

\paragraph{Hub-dominated graphs.}

Our first example correspond to the regime where the graph is dominated by high-degree vertices. This includes, as a special case, arbitrary star forests. In this regime, the lower bound is captured by the cover-reduction theorem, while the matching upper bound is achieved by the degree-on-a-cut test.

\begin{corollary}[Pendant-leaf graphs]
\label{prop:pendant_layer_tight_main}
Fix $q\in(0,1)$ and let $(\Gamma_n)_{n\ge 1}$ be a sequence of graphs with a vertex cover $U_n$ such that:
\begin{enumerate}
\item $H_n=\Gamma_n[U_n]$ is edgeless;
\item every vertex $w\in W_n\triangleq V_n\setminus U_n$ has exactly one neighbor in $U_n$.
\end{enumerate}
For each $u\in U_n$, define
\begin{align}
d_{W_n}(u)\triangleq |\{w\in W_n:\{u,w\}\in e(\Gamma_n)\}|,
\quad
\Delta_n\triangleq \max_{u\in U_n} d_{W_n}(u),
\quad
S_{2,n}\triangleq \sum_{u\in U_n} d_{W_n}^2(u).
\end{align}
If $\s{Q}_n=o\p{\frac{n^3}{S_{2,n}}}$ then weak detection is impossible. Conversely, if the feasibility conditions of Theorem~\ref{thm:degree_cut_general_unsat_main} hold at a maximizing degree level in the definition of $\kappa_n$, then $\s{Q}_n\gtrsim \frac{n^3}{\kappa_n}\log^2 n$ implies strong detection. Moreover,
\begin{align}
\kappa_n
\le
S_{2,n}
\le
4\p{1+\lfloor \log_2 \Delta_n\rfloor}\kappa_n.
\end{align}
Consequently, the lower and upper bounds differ by at most a factor of order $\log \Delta_n\cdot \log^2 n$. In particular, if the values $d_{W_n}(u)$ are all of the same order, then the two bounds match up to a factor of $\log^2 n$.
\end{corollary}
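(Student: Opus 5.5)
The proof has three parts: the lower bound, the upper bound, and the comparison between $\kappa_n$ and $S_{2,n}$.

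\emph{Lower bound.} We apply Corollary~\ref{cor:weighted_cover_reduction_simplified_main} with the given cover $U_n$. Since $H_n=\Gamma_n[U_n]$ is edgeless, the auxiliary model $\pr_{H_n,\mathbb{Q}_n}$ coincides with $\pr_{\calH_0,\mathbb{Q}_n}$ for every query set. Condition~\eqref{eq:assump_core_undetectable} therefore holds trivially.

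For the remainder term, each $w\in W_n$ has $|N_n(w)|=1$, so $|N_n(w)\cap N_n(w')|\in\{0,1\}$. Hence $\Theta_{r,n}=0$ for all $r\ge 2$, and
\[
\Theta_{1,n}=\sum_{w,w'\in W_n}\Ind\{N_n(w)=N_n(w')\}=\sum_{u\in U_n}d_{W_n}(u)^2=S_{2,n}.
\]
The condition \eqref{eq:assump_weighted_simplified} then reduces to
\[
(q^{-1}-1)\,\frac{\s{Q}_n S_{2,n}}{n(n-\tau_n)^2}\to 0 .
\]
This follows from $\s{Q}_n=o(n^3/S_{2,n})$, provided $n-\tau_n\gtrsim n$. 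The proviso holds when $\tau_n\le n/2$. If that fails, we observe that $\tau_n\le |W_n|$, since every cover vertex has at least one pendant neighbour (there are no isolated vertices and $H_n$ is edgeless). Then $\tau_n\le n/2$ automatically, because $\tau_n+|W_n|\le n$. This step is where I expect the only real care to be needed.

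\emph{Upper bound.} This part is a direct invocation of Theorem~\ref{thm:degree_cut_general_unsat_main}. At the maximizing degree level $d$ with $m_d d^2=\kappa_n$, we have $\s{Q}_{\s{cut}}=Mm\lesssim (n/m_d)\log n\cdot (n^2/d^2)\log n = n^3\log^2 n/\kappa_n$, using the feasibility hypotheses.

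\emph{Comparison of $\kappa_n$ and $S_{2,n}$.} The degree of $u\in U_n$ in $\Gamma_n$ is exactly $d_{W_n}(u)$. Vertices of $W_n$ have degree $1$ and contribute only to $m_1$; I will treat $m_1$ by noting that $m_1\cdot 1\le |W_n|+\tau_n\le 2|W_n|\le 2S_{2,n}$, or restrict the maximum to $d\ge 2$, adjusting the constant if needed.

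For the lower inequality: for $d\ge 2$, every vertex counted in $m_d$ lies in $U_n$ and satisfies $d_{W_n}(u)\ge d$, so $m_d d^2\le \sum_u d_{W_n}(u)^2=S_{2,n}$.

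For the upper inequality, I use dyadic layering. Let $L_k=\{u: 2^k\le d_{W_n}(u)<2^{k+1}\}$ for $k=0,\dots,\lfloor\log_2\Delta_n\rfloor$. Then
\[
S_{2,n}\le\sum_k |L_k|\,4^{k+1}\le \sum_k 4\,m_{2^k}\,(2^k)^2\le 4\p{1+\lfloor\log_2\Delta_n\rfloor}\kappa_n .
\]
The $k=0$ term uses $m_1$, which again is bounded by $\kappa_n$ through its definition.

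\emph{Conclusion.} Combining the two bounds, the gap between the lower and upper bounds is a factor $O(\log\Delta_n\log^2 n)$. When all $d_{W_n}(u)$ are of comparable order, only $O(1)$ dyadic layers are nonempty, so $S_{2,n}\asymp\kappa_n$ and the gap is $O(\log^2 n)$.
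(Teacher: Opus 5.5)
Your proposal is correct and follows the paper's route: the pendant cover leaves only $\Theta_{1,n}=S_{2,n}$, the upper bound is a direct call to Theorem~\ref{thm:degree_cut_general_unsat_main}, and the comparison is the same dyadic layering (the paper runs it for star forests and says it transfers verbatim). Your observation that $\tau_n\le |W_n|$, hence $n-\tau_n\ge n/2$, supplies a step the paper leaves implicit.

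Your care with $m_1$ is also warranted. If leaves are counted in $m_d$, then $\kappa_n\le S_{2,n}$ can fail: a perfect matching has $\kappa_n=2\tau_n$ but $S_{2,n}=\tau_n$. So keep your first option, which gives $\kappa_n\le 2S_{2,n}$, or count only cover vertices in $m_d$ as the paper does in its star-forest computation. Drop the alternative of restricting to $d\ge 2$, since it leaves the $k=0$ dyadic layer uncontrolled (for the matching, that restricted maximum is $0$).
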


This corollary is obtained by choosing the high-degree layer $U_n$ as the vertex cover. Since the induced core $H_n$ is edgeless, the core is automatically undetectable. Moreover, every vertex in $W_n$ contributes to exactly one singleton neighborhood inside the cover, so only the $r=1$ term survives in the cover-reduction lower bound. This yields the lower-bound scale
\begin{align}
\s{Q}_n \asymp \frac{n^3}{S_{2,n}},
\qquad
S_{2,n}=\sum_{u\in U_n} d_{W_n}^2(u).
\end{align}
On the upper-bound side, the degree-on-a-cut test is controlled by $\kappa_n=\max_d m_d(\Gamma_n)d^2$ which measures the mass of the high-degree layer. A dyadic decomposition shows that $S_{2,n}$ and $\kappa_n$ differ by at most a logarithmic factor, so the two bounds essentially coincide.

Star forests are a special case of this proposition: if
\begin{align}
\Gamma_n=\bigsqcup_{j=1}^{r_n} K_{1,d_{j,n}},
\end{align}
then one takes $U_n$ to be the set of star centers, and obtains
\begin{align}
S_{2,n}=\sum_{j=1}^{r_n} d_{j,n}^2.
\end{align}
In the equal-degree case $\Gamma_n=\bigsqcup_{j=1}^{r_n} K_{1,\Delta_n}$, this becomes
\begin{align}
S_{2,n}=r_n\Delta_n^2,\quad m_{\Delta_n}(\Gamma_n)=r_n,
\end{align}
and therefore
\begin{align}
\kappa(\Gamma_n)
=
\Delta_n^2\max\{r_n,\log n\},
\end{align}
provided $\Delta_n^2\gtrsim n\log n$; otherwise no degree level satisfies the feasibility condition and the degree-on-a-cut bound does not apply. Consequently, the sufficient query complexity becomes
\begin{align}
\s{Q}_n
\gtrsim
\frac{n^3\log^2n}
{\Delta_n^2\max\{r_n,\log n\}}.
\end{align}
Thus, we see that the lower and upper bounds match up to the $\log^2 n$ factor of the degree-on-a-cut test:
\begin{align}
\s{Q}_n \asymp \frac{n^3}{\Delta_n^2\max\{r_n,\log n\}}
\qquad\text{up to polylogarithmic factors.}
\end{align}
Finally, note that in the full-observation regime, where $Q_n=\binom{n}{2}$, this condition reduces to $\Delta_n\gtrsim\sqrt{n\log n}$, independently of the number of stars $r_n$, recovering the optimal maximum-degree threshold established in~\cite{elimelech2025detecting}.

\paragraph{Sparse low-density structures.} The next examples show that for certain sparse families, the problem is already impossible under full observation. In such cases, no query-based procedure can be sharp, since even seeing the entire graph does not help.

\begin{corollary}[Paths]
\label{cor:path_weighted_main}
Fix $q\in(0,1)$ and let $\Gamma_n=P_{k_n}$ be the path on $k_n$ vertices. If $\frac{\s{Q}_n k_n}{n^3}\to 0$,
then weak detection is impossible. In particular, if $k_n=o(n)$, then weak detection is impossible even under full observation.
\end{corollary}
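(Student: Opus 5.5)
The plan is to apply Theorem~\ref{thm:cover_reduction_query_lb} in the simplified form of Corollary~\ref{cor:weighted_cover_reduction_simplified_main}, with a vertex cover chosen so that the core $H_n$ is edgeless. Label the path $v_1,\dots,v_{k_n}$ and take $U_n=\{v_i: i \text{ even}\}$. Every edge $\{v_i,v_{i+1}\}$ has exactly one even endpoint, so $U_n$ is a vertex cover, $\tau_n=\lfloor k_n/2\rfloor$, and $H_n=\Gamma_n[U_n]$ has no edges. Planting an edgeless graph does not change the law of the transcript, so $\pr_{H_n,\mathbb{Q}_n}=\pr_{\calH_0,\mathbb{Q}_n}$ for every $\mathbb{Q}_n$, and condition \eqref{eq:assump_core_undetectable} holds trivially.

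Next I compute $\Theta_{r,n}$. Each $w=v_i\in W_n$ (odd $i$) has $N_n(w)\subseteq\{v_{i-1},v_{i+1}\}$, so $|N_n(w)|\le 2$. For $w\neq w'$ with $w=v_i$, $w'=v_j$, the two neighborhoods overlap only when $|i-j|=2$, and then the overlap is the single vertex $v_{(i+j)/2}$. The sum in \eqref{eq:Theta_r_cover_reduction_equiv} ranges over ordered pairs including $w=w'$, so I count those diagonal terms too. This gives
\[
\Theta_{1,n}\le \sum_{w}|N_n(w)| + 2\,\#\{\text{odd } i: i+2\le k_n\}\le 2k_n+k_n=O(k_n),
\qquad
\Theta_{2,n}\le \#\{w: |N_n(w)|=2\}\le k_n,
\]
and $\Theta_{r,n}=0$ for $r\ge 3$. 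Hence $\sum_r (q^{-1}-1)^r\Theta_{r,n}\le C_q k_n$ for a constant $C_q$ depending only on $q$.

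To finish, split into two cases. If $k_n\le n/2$, then $n-\tau_n\ge n/2$, and the left side of \eqref{eq:assump_weighted_simplified} is at most $4C_q\,\s{Q}_nk_n/n^3\to 0$ by hypothesis, so Theorem~\ref{thm:cover_reduction_query_lb} yields impossibility. The main obstacle is the regime $k_n$ comparable to $n$, where $\tau_n$ could approach $n/2$. Since $\tau_n\le k_n/2\le n/2$, we still have $n-\tau_n\ge n/2$ in all cases, so the same bound applies uniformly. In that regime the hypothesis forces $\s{Q}_n=o(n^2)$, which is consistent with the bound.

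For the last claim, full observation corresponds to $\s{Q}_n=\binom{n}{2}\le n^2$. Then $\s{Q}_nk_n/n^3\le k_n/n\to 0$ whenever $k_n=o(n)$, so the first part gives impossibility of weak detection even when the whole graph is observed.
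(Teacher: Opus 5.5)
Your proof is correct and follows essentially the same route as the paper: the even-indexed vertex cover with edgeless core, the bounds $\Theta_{1,n},\Theta_{2,n}=O(k_n)$ with $\Theta_{r,n}=0$ for $r\ge 3$, and then Corollary~\ref{cor:weighted_cover_reduction_simplified_main} using $n-\tau_n\ge n/2$. The only difference is cosmetic: you evaluate $\Theta_{r,n}$ through the pairwise-overlap form $\sum_{w,w'}\binom{|N_n(w)\cap N_n(w')|}{r}$ rather than through $\sum_{R}|\mathcal W_n(R)|^2$. Your case split on $k_n$ is unnecessary, since $\tau_n\le k_n/2\le n/2$ always holds, as you yourself note.
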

Choose every other vertex of the path as a cover. Then the induced core is edgeless, and each uncovered vertex has at most two neighbors in the cover. As a result, only the terms $r=1,2$ contribute in the cover-reduction bound, both at total scale $O(k_n)$. Under full observation this gives impossibility whenever $k_n=o(n)$.
\begin{corollary}[Disjoint unions of triangles]
\label{cor:triangles_full_observation_main}
Fix $q\in(0,1)$ and let
\begin{align}
\Gamma_n=\bigsqcup_{i=1}^{m_n} K_3,
\qquad
k_n\triangleq |v(\Gamma_n)|=3m_n.
\end{align}
If $k_n=o(n)$ then weak detection is impossible.
\end{corollary}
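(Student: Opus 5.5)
The plan is to apply Theorem~\ref{thm:cover_reduction_query_lb} twice, each time in the simplified form of Corollary~\ref{cor:weighted_cover_reduction_simplified_main}, with the trivial budget $\s{Q}_n\le\binom{n}{2}$. Once the full-observation budget is handled, any smaller budget is handled too, since the suprema in \eqref{eq:assump_core_undetectable}--\eqref{eq:assump_remainder_negligible} are monotone in $\s{Q}_n$. This gives impossibility whenever $k_n=o(n)$, regardless of the query budget.

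\emph{First application (triangles to matching).} In each triangle, put two of its three vertices into the cover and leave the third out. So $U_n$ has $\tau_n=2m_n\le k_n=o(n)$ vertices, and $W_n$ has exactly one vertex $w_i$ per triangle. The core $H_n=\Gamma_n[U_n]$ is then a perfect matching on $U_n$ with $m_n$ edges. Each $w_i$ satisfies $|N_n(w_i)|=2$, and distinct $w_i,w_j$ lie in different triangles, so $N_n(w_i)\cap N_n(w_j)=\emptyset$. In the sum defining $\Theta_{r,n}$ only the diagonal pairs $w=w'$ survive, and only for $r\le 2$:
\[
\Theta_{1,n}=2m_n,\qquad \Theta_{2,n}=m_n,\qquad \Theta_{r,n}=0\ \ (r\ge 3).
\]
Since $n-\tau_n\ge n/2$ for large $n$, condition \eqref{eq:assump_weighted_simplified} with $\s{Q}_n\le n^2/2$ is bounded by
\[
\frac{n^2}{2n(n/2)^2}\Big[2(q^{-1}-1)+(q^{-1}-1)^2\Big]m_n \lesssim \frac{k_n}{n}\to 0 .
\]
This verifies the attachment condition.

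\emph{Second application (matching to nothing).} It remains to check \eqref{eq:assump_core_undetectable}, namely that a planted matching with $m_n=o(n)$ edges is undetectable. The edge-hit bound (Theorem~\ref{thm:edge_hit_lower_bound}) is too weak here at $\s{Q}_n\asymp n^2$, which I expect to be the main obstacle. I would instead apply the theorem once more, now to the matching. The auxiliary model $\pr_{H_n,\mathbb{Q}_n}$ is exactly the planted model for $H_n$ under a uniformly random injective placement, so the theorem applies to it. Take as cover one endpoint of each matching edge, so $\tau_n=m_n$. The new core is edgeless, hence its planted model coincides with the null and \eqref{eq:assump_core_undetectable} holds trivially. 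Each uncovered endpoint has a single cover neighbor, these neighbors are distinct, and so $\Theta_{1,n}=m_n$ while $\Theta_{r,n}=0$ for $r\ge2$. Condition \eqref{eq:assump_weighted_simplified} then reads $\lesssim \s{Q}_n m_n/n^3\lesssim m_n/n\to0$. Hence the matching core is undetectable for every query set of size at most $\binom n2$.

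Feeding this back into the first application completes both hypotheses of Theorem~\ref{thm:cover_reduction_query_lb}, so weak detection is impossible. As a sanity check, or as an alternative to the second application, one can run a direct second-moment computation for the matching. The $\chi^2$ divergence equals $\E[q^{-X}]-1$, where $X$ counts the common edges of two independent uniform placements. Its $j$-th factorial-moment-type bound is at most $\binom{m_n}{j}\big(m_n/\binom n2\big)^j$, which gives $\E[q^{-X}]\le\exp\!\big(m_n^2/(q\binom n2)\big)\to1$. Data processing then transfers this bound to any query transcript.
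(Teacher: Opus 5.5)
Your argument is correct. Its first half coincides with the paper's proof: the same two-vertices-per-triangle cover, the same values $\Theta_{1,n}=2m_n$ and $\Theta_{2,n}=m_n$, and a remainder of order $k_n/n$. The only difference there is minor: the paper evaluates $\zeta_{r,n}=n-r$ exactly for $\mathbb{Q}_n=\binom{[n]}{2}$, whereas you use the bound of Corollary~\ref{cor:weighted_cover_reduction_simplified_main}. Both give the same order.

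The genuine difference is in how you verify \eqref{eq:assump_core_undetectable} for the matching core.
\begin{itemize}
\item The paper does not prove this inside its own framework. It imports the full-observation lower bound of \cite{elimelech2025detecting}, after checking $\mu(H_n)=1/2$, $|e(H_n)|=o(n)$ and $d_{\s{max}}^2(H_n)=o(n)$.
\item You instead apply Theorem~\ref{thm:cover_reduction_query_lb} a second time. This is exactly Corollary~\ref{cor:star_forest_weighted} with all star degrees equal to $1$, so that $S_{2,n}=r_n=m_n$. Your alternative is a direct computation of $\E[q^{-X}]$ via Lemma~\ref{lem:chi2_identity_dense}.
\end{itemize}
The nesting is legitimate. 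The auxiliary model $\pr_{H_n,\mathbb{Q}_n}$ is precisely the planted model for $H_n$. Moreover, the theorem's bound depends on $\mathbb{Q}_n$ only through $\Lambda_n$, so its conclusion is uniform over $|\mathbb{Q}_n|\le\s{Q}_n$ and can be fed back into \eqref{eq:assump_core_undetectable}.

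Your route makes the corollary self-contained and quantitative at every step. The paper's route is shorter, but it rests on an external theorem whose regime conditions have to be matched.

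For your alternative computation, state why $\pr(S\subseteq e(\Gamma))\le \bigl(m_n/\binom n2\bigr)^{j}$ for a set $S$ of $j$ disjoint edges. The exact probability is $(m_n)_j\,2^j/(n)_{2j}$. This is at most $\bigl(m_n/\binom n2\bigr)^j$ once $m_n\le (n-1)/4$, which holds for large $n$ because $m_n=o(n)$.
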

For each triangle, place two vertices in the cover and one outside. Then the induced core becomes a matching of size $m_n$, which is already undetectable under full observation when $m_n=o(n)$. The remainder term is also of order $k_n/n$, since only the terms $r=1,2$ contribute. Hence the full planted graph is undetectable as well.

\begin{corollary}[Bounded-degree trees]
\label{cor:trees_full_observation_main}
Fix $q\in(0,1)$ and let $\Gamma_n=T_n$ be a tree on $k_n$ vertices. Assume that there exists a constant $D<\infty$ such that $d_{\max}(T_n)\le D$ for all $n$. If $k_n=o(n)$, then weak detection is impossible.
\end{corollary}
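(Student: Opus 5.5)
The plan is to apply Theorem~\ref{thm:cover_reduction_query_lb} in its simplified form, Corollary~\ref{cor:weighted_cover_reduction_simplified_main}, with the maximal budget $\s{Q}_n=\binom{n}{2}$. Impossibility under full observation then implies impossibility for every smaller budget. Since $T_n$ is a tree, it is bipartite. I will take $U_n$ to be one of its two color classes, say the smaller one, so $\tau_n\le k_n$.

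\textbf{Choice of cover and the core condition.} With this choice, $U_n$ is a vertex cover, and the cover-induced core $H_n=T_n[U_n]$ is edgeless. Planting an edgeless graph changes nothing, so $\pr_{H_n,\mathbb{Q}_n}=\pr_{\calH_0,\mathbb{Q}_n}$ for every query set. Hence condition \eqref{eq:assump_core_undetectable} holds trivially. Also $\tau_n\le k_n=o(n)$, so $n-\tau_n=(1-o(1))n$.

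\textbf{Bounding $\Theta_{r,n}$.} Each $w\in W_n$ has $N_n(w)=N_{T_n}(w)$, of size at most $D$. Therefore $\binom{|N_n(w)\cap N_n(w')|}{r}=0$ for $r>D$, and only $r\le D$ contributes. I split the double sum in \eqref{eq:Theta_r_cover_reduction_equiv} into two parts.
\begin{itemize}
\item \emph{Diagonal terms $w=w'$.} Each contributes $\binom{\deg(w)}{r}\le 2^D$, and there are at most $k_n$ of them.
\item \emph{Off-diagonal terms $w\neq w'$.} Two distinct vertices of a tree have at most one common neighbor, since two common neighbors would close a $4$-cycle. So each such term is at most $1$, and it is nonzero only if $w,w'$ share a neighbor $u\in U_n$. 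Each $u$ is shared by at most $D^2$ ordered pairs, so there are at most $k_nD^2$ such pairs.
\end{itemize}
Altogether $\Theta_{r,n}\le k_n(2^D+D^2)$ for all $r$, and $\Theta_{r,n}=0$ for $r>D$.

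\textbf{Verifying \eqref{eq:assump_weighted_simplified}.} Plugging in, we need
\[
\frac{\s{Q}_n}{n(n-\tau_n)^2}\sum_{r=1}^{D}(q^{-1}-1)^r\Theta_{r,n}
\le \frac{n^2}{n\cdot (1-o(1))^2n^2}\, D\max\{1,(q^{-1}-1)^D\}\,k_n(2^D+D^2)
\lesssim_{q,D}\frac{k_n}{n}\to 0 .
\]
Both conditions of Theorem~\ref{thm:cover_reduction_query_lb} hold, so weak detection is impossible even with full observation. The only step needing real care is the combinatorial bound on $\Theta_{r,n}$. There the tree structure (no $4$-cycles) and bounded degree are exactly what keep the off-diagonal overlaps linear in $k_n$ rather than quadratic.
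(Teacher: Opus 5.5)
Your proposal is correct and follows essentially the same route as the paper: full observation, the smaller bipartition class as the cover so that the core is edgeless, $\Theta_{r,n}=0$ for $r>D$ and $\Theta_{r,n}=O_D(k_n)$ otherwise, giving a remainder of order $k_n/n$ (invoking Corollary~\ref{cor:weighted_cover_reduction_simplified_main} instead of computing $\zeta_{r,n}=n-r$ directly makes no difference). Your diagonal/off-diagonal count is in fact more careful than the paper's shortcut $\binom{m}{r}\le m$, which fails for $r\ge 2$ (e.g.\ $\binom{4}{2}=6$) though only by a $D$-dependent constant; on the other hand, the no-$4$-cycle observation is not needed, since bounded degree alone limits the overlapping ordered pairs to $\sum_{u\in U_n} d_{W_n}(u)^2\le D(k_n-1)$.
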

Choose the smaller side of a bipartition of the tree as the cover. Then the induced core is edgeless. Since degrees are uniformly bounded by $D$, only finitely many terms in the cover-reduction sum contribute, and each contributes at scale $O(k_n)$. Under full observation this yields impossibility whenever $k_n=o(n)$.

\paragraph{Bounded-cover graphs.} We next obtain a general impossibility result for all graph families with bounded vertex-cover number, such as a star graph.

\begin{corollary}[Bounded vertex cover]
\label{cor:bounded_vertex_cover_weighted_main}
Fix $q\in(0,1)$, and let $(\Gamma_n)$ be a sequence of graphs such that $\tau(\Gamma_n)\le \tau_0$ for some constant $\tau_0<\infty$. Let $\Delta_n\triangleq d_{\max}(\Gamma_n)$. If
\begin{align}
\s{Q}_n=o\p{\frac{n^3}{\Delta_n^2}},
\end{align}
then weak detection is impossible.
\end{corollary}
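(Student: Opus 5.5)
The plan is to apply Corollary~\ref{cor:weighted_cover_reduction_simplified_main} with a minimum vertex cover $U_n$ of $\Gamma_n$, so that $\tau_n=\tau(\Gamma_n)\le\tau_0$. Two conditions must then be checked: the core condition \eqref{eq:assump_core_undetectable} for $H_n=\Gamma_n[U_n]$, and the simplified remainder condition \eqref{eq:assump_weighted_simplified}.

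\emph{Core condition.} The core $H_n$ lives on at most $\tau_0$ vertices, so $|e(H_n)|\le\binom{\tau_0}{2}=O(1)$. The auxiliary model $\pr_{H_n,\mathbb{Q}_n}$ is exactly the planted model with planted graph $H_n$ (after discarding isolated vertices of $H_n$, which do not affect the law; if $H_n$ is edgeless the two laws coincide). Theorem~\ref{thm:edge_hit_lower_bound} applied to $H_n$ gives $d_{\s{TV}}\to0$ uniformly over query sets of size $\s{Q}_n$, whenever $\s{Q}_n=o(n^2)$. This holds in our regime: $\s{Q}_n\le\binom{n}{2}$ always, and $\s{Q}_n=o(n^3/\Delta_n^2)$ with $\Delta_n\ge\sqrt n$ forces $\s{Q}_n=o(n^2)$. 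In the remaining regime $\Delta_n<\sqrt n$ the edge-hit argument does not suffice. There I would use the edge-hit proof itself: the transcript differs only if some queried pair lands on one of the $O(1)$ planted core edges, which has probability at most $\s{Q}_n\,O(1)/\binom n2$. That bound is not small when $\s{Q}_n\asymp n^2$, so a different argument is needed. I would fall back on a direct $\chi^2$ computation for a planted graph with $O(1)$ edges: the second moment $\E[(1+(q^{-1}-1))^{|e(H)\cap e(H')\cap\mathbb{Q}|}]-1$ is $O(1/n^2)\cdot O(1)$ after summing, since two random copies of a graph with $O(1)$ vertices share an edge with probability $O(1/n^2)$. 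That calculation is the key step for this regime.

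\emph{Remainder condition.} Since $|N_n(w)\cap N_n(w')|\le\tau_0$, the sum over $r$ has at most $\tau_0$ terms, and $(q^{-1}-1)^r\le C(q,\tau_0)$. It therefore suffices to bound each $\Theta_{r,n}$ by $C\sum_{w,w'}|N_n(w)\cap N_n(w')|$. Exchanging the order of summation gives
\[
\sum_{w,w'\in W_n}|N_n(w)\cap N_n(w')|=\sum_{u\in U_n}d_{W_n}(u)^2\le\tau_0\Delta_n^2 .
\]
For the prefactor, note $|v(\Gamma_n)|\le\tau_0(\Delta_n+1)$, since each edge meets the cover. If $\Delta_n\le n/2$, say, then $n-\tau_n\ge n/2$ and the left side of \eqref{eq:assump_weighted_simplified} is $O(\s{Q}_n\Delta_n^2/n^3)\to0$.

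\emph{Main obstacle.} The hardest part is the boundary behavior of $n-\tau_n$ together with the core condition in the $\s{Q}_n\asymp n^2$ regime. Since $\tau_n\le\tau_0$ is fixed, $n-\tau_n\ge n/2$ always holds for large $n$, so the prefactor causes no real trouble. The remaining difficulty is verifying the core condition for every $\s{Q}_n\le\binom n2$ when $\Delta_n$ is small. I expect the $O(1)$-edge $\chi^2$ argument above to handle it, possibly citing the full-observation undetectability of graphs with $O(1)$ edges from \cite{elimelech2025detecting}, since full observation dominates any query set.
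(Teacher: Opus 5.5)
Your proposal is correct and follows the paper's route. You take a minimum vertex cover, bound $\chi^2$ directly for the $O(1)$-edge core, and then apply Corollary~\ref{cor:weighted_cover_reduction_simplified_main} with $\Theta_{r,n}=O_{q,\tau_0}(\Delta_n^2)$. The paper writes the core step as $q^{-X}-1\le (q^{-R_0}-1)X$ with $\E[X]\le R_0^2/\binom n2$. That bound holds for every budget, so your edge-hit case split at $\Delta_n\ge\sqrt n$ is unnecessary. One small difference: the paper gets $\Theta_{r,n}\le\binom{\tau_0}{r}\Delta_n^2$ directly from $|\mathcal W_n(R)|\le\Delta_n$. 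Your route goes through $\sum_u d_{W_n}(u)^2$, which needs the true but unstated inequality $\binom{k}{r}\le 2^{\tau_0}k$ for $0\le k\le\tau_0$.
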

The proof uses a minimum vertex cover $U_n$. Since $|U_n|$ is uniformly bounded, the core $H_n=\Gamma_n[U_n]$ has bounded size and is therefore undetectable. The remainder term is controlled by $\Delta_n^2$, since each uncovered vertex can only attach to the bounded cover. This shows that bounded-cover families have lower-bound scale $n^3/\Delta_n^2$. This lower bound is complemented by the degree-on-a-cut upper bound whenever the high-degree mass is concentrated. In particular, if $\kappa(\Gamma_n)\asymp \Delta_n^2$, then DegreeCut achieves strong detection with
\begin{align}
\s{Q}_n\gtrsim \frac{n^3}{\Delta_n^2}\log^2 n.
\end{align}

\paragraph{Clique-like structures.} For clique-like graphs, the direct edge-hit lower bound is the relevant one, and it is complemented by the scan upper bound.
\begin{corollary}[Cliques]
\label{cor:cliques_direct_main}
Fix $q\in(0,1)$ and let $\Gamma_n=K_{k_n}$ be the clique on $k_n$ vertices. If
\begin{align}
\s{Q}_n=o\p{\frac{n^2}{k_n^2}},
\end{align}
then weak detection is impossible.
\end{corollary}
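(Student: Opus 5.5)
This is a direct specialization of Theorem~\ref{thm:edge_hit_lower_bound}. For $\Gamma_n=K_{k_n}$ we have $|v(\Gamma_n)|=k_n\le n$ and $|e(\Gamma_n)|=\binom{k_n}{2}$. The whole task is to check that the hypothesis \eqref{eq:edge_hit_condition} holds, namely $\s{Q}_n=o\p{n^2/\binom{k_n}{2}}$.

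\textbf{Main case, $k_n\ge 2$.} Here $\binom{k_n}{2}=k_n(k_n-1)/2\le k_n^2/2$. Hence
\begin{align*}
\frac{n^2}{\binom{k_n}{2}}\ \ge\ \frac{2n^2}{k_n^2},
\end{align*}
so $\s{Q}_n=o(n^2/k_n^2)$ implies $\s{Q}_n=o\p{n^2/|e(\Gamma_n)|}$. Theorem~\ref{thm:edge_hit_lower_bound} then gives that weak detection is impossible.

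\textbf{Degenerate case, $k_n\le 1$.} The paper assumes $\Gamma_n$ has no isolated vertices, so $k_n\ge 2$ is implicit. If one nevertheless allowed $k_n\le1$, then $\Gamma_n$ has no edges. In that case $\pr_{\calH_1,\mathbb{Q}_n}=\pr_{\calH_0,\mathbb{Q}_n}$ for every query set, and detection is trivially impossible.

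For intuition, one can also unpack Theorem~\ref{thm:edge_hit_lower_bound} directly in this setting. Couple the null and the alternative on the event that no queried pair lies in the planted clique. A union bound gives
\begin{align*}
d_{\s{TV}}\p{\pr_{\calH_1,\mathbb{Q}_n},\pr_{\calH_0,\mathbb{Q}_n}}\le \s{Q}_n\binom{k_n}{2}\Big/\binom{n}{2}.
\end{align*}
Under the assumed budget the right-hand side tends to $0$, uniformly over all $\mathbb{Q}_n$ with $|\mathbb{Q}_n|\le\s{Q}_n$.

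There is no real obstacle in this argument. The only point needing care is that the $o(\cdot)$ statement transfers between $k_n^2$ and $\binom{k_n}{2}$, which holds because they are within a constant factor of each other once $k_n\ge 2$.
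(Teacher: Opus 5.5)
Your proof is correct and is essentially the paper's argument. The paper formally invokes Theorem~\ref{thm:cover_reduction_query_lb} with the trivial cover $U_n=v(\Gamma_n)$, so that $W_n=\emptyset$ and the remainder condition is vacuous; the only substantive step is then the edge-hit bound applied to $K_{k_n}$ via $\s{Q}_n\binom{k_n}{2}/\binom{n}{2}\to 0$, which is exactly what your direct appeal to Theorem~\ref{thm:edge_hit_lower_bound} does without the detour.
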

For cliques, the direct edge-hit lower bound already gives the correct scale, since
\begin{align}
|e(K_{k_n})|\asymp k_n^2.
\end{align}
On the upper-bound side, one can choose a smaller clique witness $H_n=K_{h_n}$ with $h_n\asymp C\log n$ and apply the scan test. This yields strong detection at query complexity
\begin{align}
\s{Q}_n\asymp \frac{n^2}{k_n^2}\cdot \mathrm{polylog}(n),
\end{align}
so the lower and upper bounds match up to polylogarithmic factors, coinciding with the classical results \cite{racz2020finding} (see also \cite{mardia2020finding,mardia2021space,mardia2024lowdegree,HuleihelMazumdarPal2024QueryPDS}).

\sloppy
\begin{corollary}[Clique with pendant leaves]
\label{cor:clique_pendant_main}
Fix $q\in(0,1)$ and let $\Gamma_n=K_{k_n}\cup
\{\Delta_n\text{ pendant leaves attached to each clique vertex}\}$. If
\begin{align}
\s{Q}_n
=
o\left(
\min\left\{
\frac{n^2}{k_n^2},
\,
\frac{n^3}{k_n\Delta_n^2}
\right\}
\right),
\end{align}
then weak detection is impossible.
\end{corollary}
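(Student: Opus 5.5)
We apply Theorem~\ref{thm:cover_reduction_query_lb} through its simplified form, Corollary~\ref{cor:weighted_cover_reduction_simplified_main}. The vertex cover is $U_n\triangleq v(K_{k_n})$, the clique vertices, so $\tau_n=k_n$. Every pendant leaf is adjacent only to its clique vertex, so $U_n$ covers all edges. The residual set $W_n$ consists of the $k_n\Delta_n$ leaves, which form an independent set. The core $H_n=\Gamma_n[U_n]=K_{k_n}$ is exactly the clique. Since $|v(\Gamma_n)|=k_n(1+\Delta_n)\le n$, we have $k_n\Delta_n\le n$. We may also assume $k_n=o(n)$, because otherwise $n^2/k_n^2=O(1)$ and the claim is trivial. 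Hence $n-\tau_n\asymp n$.

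\emph{Core condition \eqref{eq:assump_core_undetectable}.} The auxiliary model $\pr_{H_n,\mathbb{Q}_n}$ is exactly the planted clique model with $\Gamma_n$ replaced by $K_{k_n}$ on a uniformly random vertex set. Theorem~\ref{thm:edge_hit_lower_bound} gives $d_{\s{TV}}\to 0$ uniformly over query sets of size $\s{Q}_n$ whenever $\s{Q}_n=o(n^2/|e(K_{k_n})|)=o(n^2/k_n^2)$. To get the uniform-in-$\mathbb{Q}_n$ supremum, we use the proof mechanism rather than just the statement. The probability that a fixed query set hits a planted edge is at most $\s{Q}_n\binom{k_n}{2}/\binom{n}{2}$, and this bound does not depend on $\mathbb{Q}_n$. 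Conditioned on no hit, the transcript has the null law, so the TV distance is bounded by this hitting probability.

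\emph{Remainder condition.} We compute $\Theta_{r,n}$. Each leaf $w$ has $N_n(w)=\{u(w)\}$, a single vertex, so $|N_n(w)\cap N_n(w')|\in\{0,1\}$. Consequently only $r=1$ contributes, and
\[
\Theta_{1,n}=\#\{(w,w'):u(w)=u(w')\}=\sum_{u\in U_n}\Delta_n^2=k_n\Delta_n^2,
\]
counting ordered pairs including $w=w'$. Condition \eqref{eq:assump_weighted_simplified} then reads
\[
\frac{\s{Q}_n}{n(n-k_n)^2}(q^{-1}-1)k_n\Delta_n^2\asymp \frac{\s{Q}_nk_n\Delta_n^2}{n^3}\to 0.
\]
This holds under the second term of the minimum, $\s{Q}_n=o(n^3/(k_n\Delta_n^2))$. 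This is the pendant-leaf computation of Corollary~\ref{prop:pendant_layer_tight_main}, with $S_{2,n}=k_n\Delta_n^2$, except that here the core is nonempty.

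Both conditions hold under the stated budget, so Theorem~\ref{thm:cover_reduction_query_lb} yields impossibility of weak detection. The only delicate point is the core step, which needs a uniform edge-hit bound for the clique core, and the explicit hitting-probability argument supplies it. The degenerate case $\Delta_n=0$ reduces directly to Corollary~\ref{cor:cliques_direct_main}.
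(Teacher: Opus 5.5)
Your proposal is correct and follows the paper's proof essentially verbatim. Both take the clique vertices as the cover, both dispose of the core $K_{k_n}$ via the edge-hit bound, and both compute $\Theta_{1,n}=k_n\Delta_n^2$ with $\Theta_{r,n}=0$ for $r\ge 2$, combined with $\zeta_{1,n}\le 2\s{Q}_n/n$. Your reduction to $k_n=o(n)$ is also a cleaner justification of $n-\tau_n\asymp n$ than the paper's appeal to the ``nontrivial regime''; stated precisely, on indices with $\s{Q}_n\ge 1$ one has $k_n^2/n^2\le \s{Q}_nk_n^2/n^2\to 0$, while $\s{Q}_n=0$ is trivial.
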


Unlike the clique example, the direct edge-hit lower bound applied to the entire graph only yields
\begin{align}
\s{Q}_n
=
o\left(n^2/[k_n^2+k_n\Delta_n]
\right),
\end{align}
which can be substantially weaker. The improvement comes from Theorem~\ref{thm:cover_reduction_query_lb}: choosing the clique as the core separates the dense part of the graph from the pendant leaves. The clique itself is handled by the edge-hit lower bound, while the contribution of the leaves is controlled independently through the attachment term in Theorem~\ref{thm:cover_reduction_query_lb}. Thus, the decomposition recognizes that the pendant edges do not carry independent statistical information and avoids paying for their total number.

The upper bounds exhibit exactly the same two competing mechanisms. Using the clique as a witness, the scan test achieves strong detection with
\begin{align}
\s{Q}_n
\asymp n^2/k_n^2,
\end{align}
up to polylogarithmic factors. On the other hand, the degree-on-a-cut test exploits the large degrees of the clique vertices and succeeds with
\begin{align}
\s{Q}_n
\asymp
\frac{n^3}
{(k_n+\Delta_n)^2
\max\{k_n,\log n\}}
\log^2 n.
\end{align}
Consequently, the query complexity is determined by whichever of these two mechanisms is stronger, and complements Corollary~\ref{cor:clique_pendant_main}. This example illustrates the advantage of the vertex-cover reduction in Theorem~\ref{thm:cover_reduction_query_lb}: by separating the statistically informative clique from the low-complexity attachment layer, it yields a substantially sharper lower bound than treating the planted graph as a whole. Moreover, it highlights the complementary nature of the scan and degree-on-a-cut algorithms, whose performances are governed by the dense core and the high-degree attachment layer, respectively.

\paragraph{Complete bipartite graphs.} Complete bipartite graphs interpolate between the clique-like scan regime and the bounded-cover degree regime.
\begin{corollary}[Complete bipartite graphs]
\label{prop:complete_bipartite_tradeoff_main}
Fix $q\in(0,1)$ and let $\Gamma_n = K_{a_n,b_n}$ with $1\le a_n\le b_n$. If $\s{Q}_n=o\p{\frac{n^2}{a_n b_n}}$, then weak detection is impossible. Furthermore,
\begin{enumerate}
\item If the feasibility conditions of Theorem~\ref{thm:degree_cut_general_unsat_main} hold with $d=b_n$, then
\begin{align}
\s{Q}_n\gtrsim \frac{n^3}{a_n b_n^2}\log^2 n
\end{align}
implies strong detection.
\item If $b_n\le \rho a_n$ for some fixed $\rho<\infty$ and $a_n\gtrsim \log n$, then
\begin{align}
\s{Q}_n\gtrsim \frac{n^2}{a_n b_n}\log^4 n
\end{align}
implies strong detection via the scan test.
\end{enumerate}
\end{corollary}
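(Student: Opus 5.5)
The corollary has three parts, each following from an earlier theorem. For the lower bound we apply Theorem~\ref{thm:edge_hit_lower_bound} directly. Since $|e(K_{a_n,b_n})|=a_nb_n$, the hypothesis $\s{Q}_n=o(n^2/(a_nb_n))$ is exactly condition \eqref{eq:edge_hit_condition}, so weak detection is impossible. No cover reduction is needed for this part.

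For item 1, we invoke Theorem~\ref{thm:degree_cut_general_unsat_main} at degree level $d=b_n$. In $K_{a_n,b_n}$ each of the $a_n$ vertices on the small side has degree $b_n$, and the $b_n$ vertices on the large side have degree $a_n\le b_n$.
\begin{itemize}
\item If $a_n<b_n$, then $m_{b_n}(\Gamma_n)=a_n$.
\item If $a_n=b_n$, then $m_{b_n}(\Gamma_n)=2a_n$.
\end{itemize}
In either case $m_{b_n}\ge a_n$. With $m=\lceil C_1 n^2\log n/b_n^2\rceil$ and $M\le \lceil C_2 n\log n/a_n\rceil$, the budget satisfies
\[
Mm\lesssim \frac{n^3\log^2 n}{a_nb_n^2},
\]
once we check that the ceilings are harmless. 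Feasibility, namely $m\le n/2$ (equivalently $b_n^2\gtrsim n\log n$) and $|v(\Gamma_n)|m/n\ge 10\log n$, is assumed. If $M$ is capped at $n$, the budget only decreases. Hence any $\s{Q}_n\gtrsim n^3\log^2 n/(a_nb_n^2)$ suffices, and we may discard surplus queries.

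For item 2, we apply Theorem~\ref{thm:scan_upper} with a balanced biclique witness $H_n=K_{h,h}$, where $h=\lceil C\log n\rceil\le a_n$. This is possible because $a_n\gtrsim\log n$. Then $v_n=2h$ and $e_n=h^2$. Set $M_n\asymp \sqrt{\s{Q}_n}\asymp n\log^2 n/\sqrt{a_nb_n}$.
\begin{itemize}
\item \emph{False positives.} We have $M_n^{2h}q^{h^2}=\exp(2h\log M_n-h^2\log(1/q))\to0$ provided $C\log(1/q)>2\cdot 2$. This holds for $C$ large depending on $q$, since $\log M_n\le 2\log n$.
\item \emph{Expected number of copies.} $N(H_n,\Gamma_n)\ge \binom{a_n}{h}\binom{b_n}{h}$, up to the $2$ or $(h!)^2$ labeling factors, which only help. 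Hence
\[
\mu_n\gtrsim \Big(\frac{a_nM_n}{hn}\Big)^h\Big(\frac{b_nM_n}{hn}\Big)^h=\Big(\frac{a_nb_nM_n^2}{h^2n^2}\Big)^h\approx(\log^2 n)^h\to\infty.
\]
The condition $b_n\le\rho a_n$ lets us treat $a_nM_n/n$ and $b_nM_n/n$ symmetrically, each $\gtrsim \log^2 n/\sqrt\rho\gg h$. This also requires $M_n\le n$, which we may assume since otherwise we observe everything.
\end{itemize}

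The main obstacle is the overlap condition $\Delta_n=o(\mu_n^2)$. I would bound $N_t$ by splitting the overlap $t=s_1+s_2$ according to how many shared vertices lie on each side. This gives
\[
N_t\le N(H_n,\Gamma_n)\sum_{s_1+s_2=t}\binom{h}{s_1}\binom{h}{s_2}\binom{a_n}{h-s_1}\binom{b_n}{h-s_2}\cdot(\text{labeling factors}).
\]
Each term of $\Delta_n/\mu_n^2$ is then at most roughly
\[
\binom{h}{s_1}\binom{h}{s_2}\Big(\frac{hn}{a_nM_n}\Big)^{s_1}\Big(\frac{hn}{b_nM_n}\Big)^{s_2}\le \Big(\frac{h^2 n}{\min(a_n,b_n)M_n}\Big)^{t}.
\]
Here $h^2n/(a_nM_n)\lesssim \sqrt\rho\, h^2/\log^2 n$, which is a small constant for suitable constants. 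This is exactly where the $\log^4 n$ in the budget is spent. Summing the geometric series over $t\ge1$ gives $\Delta_n/\mu_n^2=o(1)$. Some care is needed with the falling-factorial ratios $(M_n)_{2v_n-t}/(n)_{2v_n-t}$ and with automorphism and labeling factors, but these only contribute $(1+o(1))^{O(h)}$ or cancel between $N_t$ and $N^2$. Theorem~\ref{thm:scan_upper} then gives strong detection.
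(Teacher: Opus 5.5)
Your route is the paper's: Theorem~\ref{thm:edge_hit_lower_bound} for the lower bound (citing it directly is cleaner than the paper's appeal to Theorem~\ref{thm:cover_reduction_query_lb}), DegreeCut at $d=b_n$ for item~1, and the scan test for item~2. For item~1, your observation that $m_{b_n}(\Gamma_n)=2a_n$ when $a_n=b_n$ is correct and only helps. For item~2 you use a $K_{h,h}$ witness, $h=\lceil C\log n\rceil$, and $M_n\asymp n\log^2 n/\sqrt{a_nb_n}$. The lower bound and item~1 are fine. The gap is the final sentence of item~2.

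A geometric series whose ratio is a \emph{fixed} small constant $\epsilon\asymp\sqrt{\rho}\,C^2/A$ sums to $O(\epsilon)$, not $o(1)$. Here $A$ is the constant in $M_n$. This is not slack in your estimates. When $M_n\le n$ (which forces $a_n$ to be a large multiple of $h^2$), the ordered pairs of copies sharing exactly one vertex already contribute a term of order $\frac{h^2 n}{M_n}\bigl(\frac{1}{a_n}+\frac{1}{b_n}\bigr)\asymp C^2/A$ to $\Delta_n/\mu_n^2$. Hence, at a budget $C_3\frac{n^2}{a_nb_n}\log^4 n$ with $C_3$ fixed, the hypothesis $\Delta_n=o(\mu_n^2)$ of Theorem~\ref{thm:scan_upper} is false. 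Janson's bound $\exp\bigl(-\mu_n^2/(\mu_n+\Delta_n)\bigr)$ then only caps the Type-II error by a constant of order $e^{-\Theta(A/C^2)}$, which is not strong detection. The paper's proof makes the same leap (``by taking $A$ larger if necessary \ldots\ hence $\Delta_n/\mu_n^2\to 0$'').

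Letting $A=A_n\to\infty$ slowly repairs your argument, but only for $\s{Q}_n=\omega\bigl(\frac{n^2}{a_nb_n}\log^4 n\bigr)$. A cleaner fix drops Janson altogether. Under $\calH_1$ the planted graph inside $S$ is exactly $K_{X,Y}$, where $L,R$ are the images of the two sides, $X=|S\cap L|\sim\s{Hypergeometric}(n,a_n,M_n)$ and $Y=|S\cap R|\sim\s{Hypergeometric}(n,b_n,M_n)$. Take $M_n=\lceil 2hn/a_n\rceil$; this is at most $n$ once $a_n\ge 2h$, which is where $a_n\ge C_0\log n$ with $C_0$ large is used. Both means are then at least $2h$, so Lemma~\ref{lem:ChernoffChv} gives $X,Y\ge h$ with probability at least $1-2e^{-h/4}\to 1$. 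On that event $\s{G}_S$ contains $K_{h,h}$ and the test outputs $1$. The Type-I bound $M_n^{2h}q^{h^2}\le n^{2h}q^{h^2}\to 0$ holds as before for $C\log(1/q)>2$.

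This uses $\binom{M_n}{2}\lesssim\frac{n^2}{a_n^2}\log^2 n\le\rho\,\frac{n^2}{a_nb_n}\log^2 n$ queries with all constants fixed. So it proves item~2 as stated, even with $\log^2 n$ in place of $\log^4 n$. It also disposes of your ``otherwise we observe everything'' case. That case needs its own argument in any event, since the overlap condition of Theorem~\ref{thm:scan_upper} can fail at $M_n=n$ when $a_n\asymp\log n$.
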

There are two different upper-bound mechanisms here: 1) If one side is much smaller than the other, the graph behaves like a hub-dominated structure, and DegreeCut is the relevant algorithm. 2) If the two sides are comparable and both at least logarithmic, then one can scan for a logarithmic complete bipartite witness, and the scan test matches the lower bound up to polylogarithmic factors. In particular, the balanced regime behaves like the clique case, while the highly unbalanced regime behaves like a star.
\begin{corollary}[Complete bipartite graphs with bounded smaller side]
\label{prop:biclique_bounded_left_main}
Fix $q\in(0,1)$ and let $\Gamma_n = K_{a_n,b_n}$ with $1\le a_n\le b_n$, where $a_n\le a_0$ for some constant $a_0<\infty$. If $\s{Q}_n=o\p{\frac{n^3}{b_n^2}}$, then weak detection is impossible, while if $\s{Q}_n\gtrsim \frac{n^3}{b_n^2}\log^2 n$, then strong detection is possible, provided the feasibility conditions of Theorem~\ref{thm:degree_cut_general_unsat_main} hold with $d=b_n$. 
\end{corollary}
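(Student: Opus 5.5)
The lower bound comes from Corollary~\ref{cor:bounded_vertex_cover_weighted_main} and the upper bound from Theorem~\ref{thm:degree_cut_general_unsat_main}.

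\textbf{Lower bound.} Take $U_n$ to be the smaller side of $K_{a_n,b_n}$. This is a vertex cover with $\tau(\Gamma_n)\le a_n\le a_0$, so the cover number is bounded. The maximum degree is $\Delta_n=d_{\max}(K_{a_n,b_n})=\max\{a_n,b_n\}=b_n$. Corollary~\ref{cor:bounded_vertex_cover_weighted_main} then gives that $\s{Q}_n=o(n^3/b_n^2)$ makes weak detection impossible.

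For a self-contained check via Corollary~\ref{cor:weighted_cover_reduction_simplified_main}, proceed as follows.
\begin{itemize}
\item The core $H_n=\Gamma_n[U_n]$ is edgeless, so $\pr_{H_n,\mathbb{Q}_n}=\pr_{\calH_0,\mathbb{Q}_n}$ and condition \eqref{eq:assump_core_undetectable} holds trivially.
\item Every $w\in W_n$ has $N_n(w)=U_n$, so $\Theta_{r,n}=b_n^2\binom{a_n}{r}$.
\item The weighted sum is therefore $b_n^2\bigl[(1+(q^{-1}-1))^{a_n}-1\bigr]=b_n^2(q^{-a_n}-1)$. Since $a_n\le a_0$, this is at most $C(q,a_0)\,b_n^2$.
\end{itemize}
Condition \eqref{eq:assump_weighted_simplified} thus reduces to $\s{Q}_n b_n^2/(n(n-a_n)^2)\to0$. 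Because $n-a_n\ge n/2$ for large $n$, this is equivalent to $\s{Q}_n=o(n^3/b_n^2)$.

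\textbf{Upper bound.} Apply Theorem~\ref{thm:degree_cut_general_unsat_main} at degree level $d=b_n$, which is assumed to satisfy the feasibility conditions.
\begin{itemize}
\item The $a_n$ vertices on the small side each have degree $b_n$, so $m_{b_n}(\Gamma_n)\ge a_n\ge1$. If $a_n=b_n$, all vertices qualify and $m_{b_n}$ is larger, which only helps.
\item With $m=\lceil C_1 n^2\log n/b_n^2\rceil$ and $M\le \lceil C_2 n\log n/m_{b_n}\rceil\lesssim n\log n$, the budget is $\s{Q}_{\s{cut}}=Mm\lesssim (n^3/b_n^2)\log^2 n$, using that $m_{b_n}\ge1$ and $a_n$ is bounded.
\item For any $\s{Q}_n\ge \s{C}\, n^3\log^2 n/b_n^2$ with a large enough constant, we have $\s{Q}_n\ge\s{Q}_{\s{cut}}$. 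The theorem then supplies a query set of size at most $\s{Q}_n$ on which $\calA_{\s{cut}}$ has vanishing risk.
\end{itemize}

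\textbf{Main obstacle.} The only delicate point is the bookkeeping of constants. The factor $q^{-a_n}$ in the lower bound stays bounded only because $a_n\le a_0$. In the upper bound, the $\lceil\cdot\rceil$ and the minimum with $n$ in $M$ must not spoil the $\log^2 n$ scaling. The bound $M\le n\log n\cdot C_2+1$ handles the latter directly.
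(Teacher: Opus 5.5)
Your proposal is correct and follows the paper's proof: the lower bound is Corollary~\ref{cor:bounded_vertex_cover_weighted_main} with $\tau(\Gamma_n)=a_n\le a_0$ and $d_{\max}(\Gamma_n)=b_n$, and the upper bound is Theorem~\ref{thm:degree_cut_general_unsat_main} at $d=b_n$. Two of your steps are sound and a bit cleaner than the paper's bookkeeping: the explicit check with an edgeless core and $\Theta_{r,n}=b_n^2\binom{a_n}{r}$, and comparing $\s{Q}_{\s{cut}}$ with $n^3\log^2 n/b_n^2$ via $m_{b_n}(\Gamma_n)\ge 1$, which also covers the case $a_n=b_n$ (the paper instead invokes $a_n\le a_0$ there, which is not what that comparison needs).
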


To conclude, for complete bipartite graphs $\Gamma_n=K_{a_n,b_n}$, the present results leave only one unresolved regime is $1\ll a_n\ll \log n$. Indeed:
\begin{itemize}
\item if $a_n=O(1)$, then the bounded-cover lower bound matches the degree-on-a-cut upper bound up to $\log^2 n$;
\item if $a_n,b_n\gtrsim \log n$ and $b_n\le \rho a_n$, then the edge-hit lower bound matches the scan upper bound up to polylogarithmic factors.
\end{itemize}
Thus the only remaining gap is the intermediate regime where the smaller side is too large for bounded-cover arguments to be sharp, but too small to support logarithmic biclique witnesses.

The examples above illustrate the general picture: the \emph{scan test} is sharp for clique-like and dense local structures; the \emph{degree-on-a-cut test} is sharp for hub-dominated and bounded-cover structures. At the opposite extreme, for sparse low-density structures such as bounded-degree trees, paths, or sparse unions of triangles, detection is already impossible under full observation.

\section{Proofs of lower bounds}\label{sec:lower_bounds}

\subsection{Preliminaries}

We start with some preliminaries. Our goal is to establish lower bounds on the optimal risk, thereby ruling out the possibility of successful detection. We begin by introducing the likelihood ratio,
\begin{align}
\s{L}(\s{G}) \triangleq \frac{\mathrm{d}\P_{\calH_1}}{\mathrm{d}\P_{\calH_0}}(\s{G}),
\label{eqn:LIKlihood}
\end{align}
which is the Radon--Nikodym derivative of $\P_{\calH_1}$ with respect to $\P_{\calH_0}$. It is well known (see, e.g., \cite[Theorem 2.2]{tsybakov2004introduction}) that the test minimizing the risk $\s{R}n$ is the likelihood ratio test
\begin{align}
\phi^{\star}(\s{G}) \triangleq
\begin{cases}
1, & \text{if } \s{L}(\s{G}) \ge 1,\\
0, & \text{otherwise},
\end{cases}
\end{align}
and the corresponding optimal risk is given by
\begin{align}
\s{R}_n^\star &= \inf_{\calA_n}\pr_{\calH_0}\p{\calA_n(\s{G})=1}
+
\pr_{\calH_1}\p{\calA_n(\s{G})=0} \\
&=1 - d_{\s{TV}}(\P_{\calH_0}, \P_{\calH_1}).
\end{align}
Recalling that
\begin{align}
\chi^2(\P_{\calH_1}\|\P_{\calH_0}) = \E_{\calH_0}[\s{L}(\s{G})^2] - 1,
\end{align}
standard inequalities relating total variation and $\chi^2$ divergence (see, e.g., \cite[Sec. 2]{tsybakov2004introduction}) imply that
\begin{align}
\s{R}_n^\star
\ge
1 - \frac{1}{2}\sqrt{\chi^2(\P_{\calH_1}\|\P_{\calH_0})}.
\label{eqn:lowerBoundSecond}
\end{align}
In particular, if $\E_{\calH_0}[\s{L}(\s{G})^2] = 1 + o(1)$, then $\chi^2(\P_{\calH_1}\|\P_{\calH_0}) \to 0$, which implies $\s{R}_n^\star \to 1$, and hence weak detection is impossible. This shows that controlling the second moment of the likelihood ratio under the null is sufficient to establish impossibility. In the remainder of this section, we derive bounds on $\E_{\calH_0}[\s{L}(\s{G})^2]$ and identify conditions under which it converges to one.

The following result is a straightforward characterization of the chi-square distance between the null and alternative distributions, given any non-adaptive query mechanism $\mathbb{Q}$.
\begin{lemma}
\label{lem:chi2_identity_dense}
Fix $q\in(0,1)$ and a deterministic query set $\mathbb{Q}\subseteq \binom{[n]}{2}$ of size $|\mathbb{Q}|=\s{Q}$.
Let $\pr_{\calH_0,\mathbb{Q}}$ and $\pr_{\calH_1,\mathbb{Q}}$ be the laws of the transcript
$\s{G}_{\mathbb{Q}}=(\s{G}_e)_{e\in\mathbb{Q}}\in\{0,1\}^Q$ under $\calH_0$ and $\calH_1$, respectively. Let $\pi$ denote the uniform distribution over labeled copies of $\Gamma$ into $\mathcal K_n$.
Let $\Gamma,\Gamma'\stackrel{\s{i.i.d.}}{\sim}\pi$ be two independent random planted copies, and define
\begin{align}
\s{X}_{\mathbb{Q}}\triangleq \big|e(\Gamma)\cap e(\Gamma')\cap \mathbb{Q}\big|.
\end{align}
Then
\begin{align}
1+\chi^2(\pr_{\calH_1,\mathbb{Q}}\|\pr_{\calH_0,\mathbb{Q}})
=
\bE_{\Gamma,\Gamma'}\pp{q^{-\s{X}_{\mathbb{Q}}}}.
\label{eq:chi2_identity_dense}
\end{align}
\end{lemma}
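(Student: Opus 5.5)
We compute the likelihood ratio of the transcript directly and then apply the standard second-moment trick. Since $\mathbb{Q}$ is deterministic, under $\calH_0$ the transcript $\s{G}_{\mathbb{Q}}$ has independent $\s{Bern}(q)$ coordinates. Under $\calH_1$, conditionally on the planted copy $\Gamma\sim\pi$, the coordinates are still independent. Each $e\in\mathbb{Q}\cap e(\Gamma)$ equals $1$ deterministically, and each $e\in\mathbb{Q}\setminus e(\Gamma)$ is $\s{Bern}(q)$. Hence, for $x\in\{0,1\}^{\mathbb{Q}}$, the conditional likelihood ratio is
\begin{align}
\s{L}_\Gamma(x)\triangleq\frac{\pr_{\calH_1,\mathbb{Q}}(x\mid\Gamma)}{\pr_{\calH_0,\mathbb{Q}}(x)}=\prod_{e\in\mathbb{Q}\cap e(\Gamma)}\frac{x_e}{q}.
\end{align}
Averaging over $\Gamma$ gives $\s{L}(x)=\bE_{\Gamma}[\s{L}_\Gamma(x)]$. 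Here $\pr_{\calH_0,\mathbb{Q}}$ has full support, because $q\in(0,1)$, so the ratio is well defined.

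Next we use the identity $1+\chi^2(\pr_{\calH_1,\mathbb{Q}}\|\pr_{\calH_0,\mathbb{Q}})=\bE_{\calH_0}[\s{L}^2]$. We write $\s{L}^2$ as a product over two independent copies $\Gamma,\Gamma'\sim\pi$ and apply Fubini. This is justified because everything is a finite sum, so no integrability issue arises. The result is
\begin{align}
\bE_{\calH_0}[\s{L}^2]=\bE_{\Gamma,\Gamma'}\Bigl[\bE_{\calH_0}\Bigl[\prod_{e\in\mathbb{Q}\cap e(\Gamma)}\frac{x_e}{q}\prod_{e\in\mathbb{Q}\cap e(\Gamma')}\frac{x_e}{q}\Bigr]\Bigr].
\end{align}
For fixed $\Gamma,\Gamma'$, the inner expectation factorizes over $e\in\mathbb{Q}$ by independence under the null. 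A coordinate lying in exactly one of the two edge sets contributes $\bE[x_e/q]=1$. A coordinate lying in both contributes $\bE[x_e^2/q^2]=\bE[x_e]/q^2=q^{-1}$, using $x_e^2=x_e$. Coordinates in neither set contribute $1$. The product is therefore $q^{-|e(\Gamma)\cap e(\Gamma')\cap\mathbb{Q}|}=q^{-\s{X}_{\mathbb{Q}}}$, and this gives \eqref{eq:chi2_identity_dense}.

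There is no real obstacle here. The only point requiring care is the conditional-independence description of $\calH_1$ restricted to the queried coordinates. Planted edges are forced to $1$ and all other queried pairs stay i.i.d. $\s{Bern}(q)$, independently of $\Gamma$. The unqueried coordinates are simply marginalized out and play no role.
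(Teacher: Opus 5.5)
Your proposal is correct and follows the same route as the paper. You compute the transcript likelihood ratio as an average over $\Gamma$ of $\prod_{e\in\mathbb{Q}\cap e(\Gamma)} x_e/q$, expand $\bE_{\calH_0}[\s{L}^2]$ over two independent copies $\Gamma,\Gamma'$, and factorize edge by edge to obtain $q^{-\s{X}_{\mathbb{Q}}}$; your remarks on full support and finiteness for Fubini are extra care that the paper leaves implicit.
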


\begin{proof}
Under $\mathcal H_0$ the queried edges $\{\s{G}_e\}_{e\in\mathbb{Q}}$ are i.i.d.\ $\s{Bern}(q)$.
Under $\mathcal H_1$, condition on the planted copy $\Gamma$:
if $e\in e(\Gamma)$ then $\s{G}_e\equiv 1$, while if $e\notin e(\Gamma)$ then $\s{G}_e\sim\s{Bern}(q)$ independently.
Thus the likelihood ratio on the transcript is
\begin{align}
\calL_{\mathbb{Q}}(\s{G}_{\mathbb{Q}})
=
\bE_{\Gamma\sim\pi}\pp{\prod_{e\in \mathbb{Q}\cap e(\Gamma)}\frac{\mathds{1}\{\s{G}_e=1\}}{q}}.
\end{align}
Therefore,
\begin{align}
1+\chi^2(\pr_{\calH_1,\mathbb{Q}}\|\pr_{\calH_0,\mathbb{Q}})
&=
\bE_{\pr_{\calH_0,\mathbb{Q}}}[\calL_{\mathbb{Q}}^2]\\
&=
\bE_{\Gamma,\Gamma'\sim\pi} 
\bE_{\pr_{\calH_0,\mathbb{Q}}}\pp{
\prod_{e\in\mathbb{Q}\cap e(\Gamma)}\frac{\mathds{1}\{\s{G}_e=1\}}{q}\cdot
\prod_{e\in\mathbb{Q}\cap e(\Gamma')}\frac{\mathds{1}\{\s{G}_e=1\}}{q}
}.
\end{align}
Fix $\Gamma,\Gamma'$. For any queried edge $e\in\mathbb{Q}$:
\begin{itemize}
\item if $e\in e(\Gamma)\cap e(\Gamma')$, the factor is $\mathds{1}\{\s{G}_e=1\}/q^2$, whose $\pr_{\calH_0,\mathbb{Q}}$-expectation equals $q/q^2=1/q$;
\item otherwise the factor has expectation $1$.
\end{itemize}
Hence the inner expectation equals $\prod_{e\in \mathbb{Q}\cap e(\Gamma)\cap e(\Gamma')} (1/q)=q^{-\s{X}_{\mathbb{Q}}}$, and \eqref{eq:chi2_identity_dense} follows.
\end{proof}

\subsection{Proof of Theorem~\ref{thm:edge_hit_lower_bound}}
Fix an arbitrary deterministic non-adaptive query set
$\mathbb{Q}_n\subseteq\binom{[n]}{2}$ with $|\mathbb{Q}_n|\le \s{Q}_n$. We will couple the transcript under $\mathcal H_1$ to that under $\mathcal H_0$ on an event of high probability.
Let $\Gamma^\star$ be the planted copy under $\mathcal H_1$.
Define the event
\begin{align}
\mathcal E \triangleq  \{\mathbb{Q}\cap e(\Gamma^\star)=\emptyset\},
\end{align}
i.e., none of the queried edges is planted. On the event $\mathcal E$, every queried edge $e\in\mathbb{Q}$ is \emph{not} a planted edge. Under the alternative, all non-planted edges are distributed as $\s{Bern}(q)$ exactly as under $\mathcal H_0$, and are independent across queried edges. Hence the conditional law of $\s{G}_{\mathbb{Q}}$ given $\mathcal E$ under $\mathcal H_1$ coincides with the unconditional law of $\s{G}_{\mathbb{Q}}$ under $\mathcal H_0$, namely,
\begin{align}
\pr_{\calH_1,\mathbb{Q}}(\cdot\vert \mathcal E)=\pr_{\calH_0,\mathbb{Q}}(\cdot).
\end{align}
Therefore,
\begin{align}
d_{\s{TV}}(\pr_{\calH_0,\mathbb{Q}},\pr_{\calH_1,\mathbb{Q}})
\leq \pr_{\calH_1,\mathbb{Q}}(\mathcal E^c),
\end{align}
which implies that
\begin{align}
\s{R}^\star_{\mathbb{Q}}\ge 1-d_{\s{TV}}(\pr_{\calH_0,\mathbb{Q}},\pr_{\calH_1,\mathbb{Q}})
\ge 1-\pr_{\calH_1,\mathbb{Q}}(\mathcal E^c).
\end{align}
Thus it suffices to show that $\pr_{\calH_1,\mathbb{Q}}(\mathcal E^c)\to 0$. Since $\Gamma^\star$ is a uniform random copy of $\Gamma$ in $\mathcal K_n$, its planted edge set $e(\Gamma^\star)$ has size $m_n$ and, by symmetry over the $N\triangleq \binom{n}{2}$ unordered vertex pairs, each fixed edge $e\in\binom{[n]}{2}$ satisfies
\begin{align}
\pr[e\in e(\Gamma^\star)]=\frac{m_n}{N}.
\end{align}
Therefore, by a union bound,
\begin{align}
\pr_{\calH_1,\mathbb{Q}_n}(\mathcal E^c)
&=
\pr\pp{\exists e\in\mathbb{Q}_n:\; e\in e(\Gamma^\star)}\\
&\le
\sum_{e\in\mathbb{Q}_n}\pr[e\in e(\Gamma^\star)]\\
&=
|\mathbb{Q}_n|\cdot \frac{m_n}{N}\\
&\le
\s{Q}_n\cdot \frac{m_n}{N}.
\label{eqn:boundBudgError}
\end{align}
Under \eqref{eq:edge_hit_condition}, the right-hand side tends to $0$. Since \eqref{eqn:boundBudgError} holds uniformly over all deterministic non-adaptive query sets $\mathbb{Q}_n$ with $|\mathbb{Q}_n|\le \s{Q}_n$, we obtain
\begin{align}
\sup_{|\mathbb{Q}_n|\le \s{Q}_n}
d_{\s{TV}}\p{\pr_{\calH_1,\mathbb{Q}_n},\pr_{\calH_0,\mathbb{Q}_n}}
\to 0.
\end{align}
Consequently, $\s{R}_{n,\s{Q}_n}^{\star}\to 1$, which completes the proof.

\subsection{Proof of Theorem~\ref{thm:cover_reduction_query_lb}}

We recall the following definitions. For each $n$, we let $\Gamma_n=(V_n,E_n)$, and $|V_n|\leq n$, where $V_n = v(\Gamma_n)$ and $E_n = e(\Gamma_n)$. For each $n$, choose a vertex cover $U_n\subseteq V_n$, and denote by $\tau_n\triangleq |U_n|$ its size. Define $W_n\triangleq V_n\setminus U_n$ and $H_n\triangleq \Gamma_n[U_n]$ to be a \emph{cover-induced subgraph} of $\Gamma_n$ on the vertex cover $U_n$. 
For each $w\in W_n$, define its neighborhood inside the chosen cover by
\begin{align}
N_n(w)\triangleq N_{\Gamma_n}(w)\cap U_n.
\end{align}
For every nonempty subset $R\subseteq U_n$, define
\begin{align}
\mathcal W_n(R)\triangleq \{w\in W_n:R\subseteq N_n(w)\}.
\end{align}
For each integer $r\in\{1,\ldots,\tau_n\}$, define
\begin{align}
\Theta_{r,n}
\triangleq 
\sum_{\substack{R\subseteq U_n\\ |R|=r}}
|\mathcal W_n(R)|^2=
\sum_{w,w'\in W_n}
\binom{|N_n(w)\cap N_n(w')|}{r}.
\label{eq:Theta_r_cover_reduction}
\end{align}
For each $n$, let $\mathbb{Q}_n\subseteq \binom{[n]}{2}$ be a deterministic non-adaptive query set of size $\s{Q}_n\triangleq |\mathbb{Q}_n|$. 

Fix $n$; to lighten notation we remove the subscript $n$ from the various variables below. If $W=\emptyset$, then $U=V_n$ and hence $H=\Gamma_n$. In that case the full planted model and the auxiliary planted model coincide $\pr_{\calH_1,\mathbb{Q}_n}=\pr_{H_n,\mathbb{Q}_n}$. Therefore the conclusion follows immediately from \eqref{eq:assump_core_undetectable}. Thus, for the remainder of the proof, we may and shall assume that $W\neq\emptyset$, in particular, $\tau\leq n-1$. Our goal is to prove
\begin{align}
d_{\s{TV}}(\pr_{\calH_1,\mathbb{Q}_n},\pr_{H_n,\mathbb{Q}_n})\to 0.
\label{eq:goal_full_vs_aux}
\end{align}
Indeed, once \eqref{eq:goal_full_vs_aux} is established, the triangle inequality gives
\begin{align}
d_{\s{TV}}(\pr_{\calH_1,\mathbb{Q}_n},\pr_{\calH_0,\mathbb{Q}_n})
\le
d_{\s{TV}}(\pr_{\calH_1,\mathbb{Q}_n},\pr_{H_n,\mathbb{Q}_n})
+
d_{\s{TV}}(\pr_{H_n,\mathbb{Q}_n},\pr_{\calH_0,\mathbb{Q}_n}),
\end{align}
and the right-hand side tends to $0$ by \eqref{eq:goal_full_vs_aux} and \eqref{eq:assump_core_undetectable}. Now, under the full planted model, let $\Phi_n:V_n\hookrightarrow [n]$ be the random injective embedding of $\Gamma_n$ into $K_n$. Define the ordered image of the chosen cover by
\begin{align}
A_n&\triangleq (a_u)_{u\in U},\\
a_u&\triangleq \Phi_n(u),\quad u\in U.
\end{align}
Under the auxiliary planted model, the cover vertices are embedded by the uniformly random injective map $\varphi_n:U\hookrightarrow [n]$. Define similarly
\begin{align}
A_n'\triangleq (\varphi_n(u))_{u\in U},\quad u\in U.
\end{align}
Since in both models the map from $U$ to $[n]$ is uniformly random and injective, the random ordered tuples $A_n$ and $A_n'$ have the same distribution. Fix an ordered $\tau$-tuple $A=(a_u)_{u\in U}$ of distinct vertices of $[n]$. We define two conditional laws on the queried transcript.

\begin{itemize}
\item $\pr^\Gamma_{A,\mathbb{Q}}$ is the law of the queried transcript under the full planted model, conditioned on $\Phi_n(u)=a_u$, for all $u\in U$.
\item $\pr^H_{A,\mathbb{Q}}$ is the law of the queried transcript under the auxiliary planted model, conditioned on $\varphi_n(u)=a_u$, for all $u\in U$.
\end{itemize}
By conditioning on the random ordered image of the cover, both original laws are mixtures of these conditional laws:
\begin{align}
\pr_{\calH_1,\mathbb{Q}_n}
&=
\bE_{A_n}\bigl[\pr^\Gamma_{A_n,\mathbb{Q}}\bigr],
\\
\pr_{H_n,\mathbb{Q}_n}
&=
\bE_{A_n}\bigl[\pr^H_{A_n,\mathbb{Q}}\bigr].
\label{eq:mixture_representation_cover}
\end{align}
Now fix the tuple $A$. Since $U$ is a vertex cover of $\Gamma_n$, the set $W=V_n\setminus U$ is an independent set in $\Gamma_n$. Consequently, in both models, the planted edges \emph{inside} the set $A$ are exactly the edges of $H=\Gamma_n[U]$, and in both models, no edges are planted with both endpoints outside $A$. Therefore the conditional laws $\pr^\Gamma_{A,\mathbb{Q}}$ and $\pr^H_{A,\mathbb{Q}}$ can differ only on queried edges joining $A$ to vertices outside $A$. Define
\begin{align}
\calB_A
\triangleq 
\ppp{
v\in[n]\setminus A:
\exists u\in U\ \s{s.t.}\ \{a_u,v\}\in \mathbb{Q}},
\end{align}
and
\begin{align}
M_A\triangleq |\calB_A|.
\end{align}
Also define
\begin{align}
\mathcal E_A
\triangleq 
\ppp{
\{a_u,v\}\in \mathbb{Q}:
u\in U,\ v\in \calB_A
}.
\end{align}
Thus $\mathcal E_A$ is precisely the set of queried edges on which the two conditional laws may differ.

Fix $A$ as above. Let $\Gamma^A$ denote a random planted copy of $\Gamma_n$ under the full model conditioned on the event $\Phi_n(u)=a_u$, for all $u\in U$. For any subset $S\subseteq \mathcal E_A$, define
\begin{align}
p_A(S)\triangleq \pr\pp{S\subseteq e(\Gamma^A)}.
\end{align}
We claim that
\begin{align}
1+\chi^2\p{\pr^\Gamma_{A,\mathbb{Q}},\pr^H_{A,\mathbb{Q}}}
=
\sum_{S\subseteq \mathcal E_A}
(q^{-1}-1)^{|S|}p^2_A(S).
\label{eq:conditional_chi2_cover}
\end{align}
Indeed, to that end, note that under the auxiliary model conditioned on $A$, every queried edge in $\mathcal E_A$ is independent $\s{Bern}(q)$, because the auxiliary model only plants edges of $H$ inside $A$ and does not plant any edge between $A$ and its complement. Conditioned on a realization of the random planted copy $\Gamma^A$, every queried edge in $e(\Gamma^A)\cap \mathcal E_A$ is forced to be present, while every queried edge in $\mathcal E_A\setminus e(\Gamma^A)$ is $\s{Bern}(q)$ and independent of the others. Let $Y=(Y_e)_{e\in\mathcal E_A}$ denote the queried transcript restricted to $\mathcal E_A$. The likelihood ratio of $\pr^\Gamma_{A,\mathbb{Q}}$ with respect to $\pr^H_{A,\mathbb{Q}}$ on the coordinates indexed by $\mathcal E_A$ is
\begin{align}
\calL_A(Y)
=
\bE_{\Gamma^A}
\pp{
\prod_{e\in e(\Gamma^A)\cap \mathcal E_A}
\frac{\mathds{1}\{Y_e=1\}}{q}
}.
\end{align}
Therefore
\begin{align}
1+\chi^2\p{\pr^\Gamma_{A,\mathbb{Q}},\pr^H_{A,\mathbb{Q}}}
=
\bE_{\pr^H_{A,\mathbb{Q}}}\pp{\calL^2_A(Y)}.
\end{align}
Introduce two independent conditioned copies $\Gamma^A$ and $\widetilde\Gamma^A$. Expanding the square and using Fubini's theorem,
\begin{align}
1+\chi^2\p{\pr^\Gamma_{A,\mathbb{Q}},\pr^H_{A,\mathbb{Q}}}
&=
\bE_{\Gamma^A\indep\widetilde\Gamma^A}
\bE_{\pr^H_{A,\mathbb{Q}}}
\pp{
\prod_{e\in e(\Gamma^A)\cap \mathcal E_A}
\frac{\mathds{1}\{Y_e=1\}}{q}
\prod_{e\in e(\widetilde\Gamma^A)\cap \mathcal E_A}
\frac{\mathds{1}\{Y_e=1\}}{q}
}.
\end{align}
Fix an edge $e\in\mathcal E_A$. Under $\pr^H_{A,\mathbb{Q}}$, the variable $Y_e$ is $\s{Bern}(q)$. Hence:
\begin{itemize}
\item if $e\in e(\Gamma^A)\cap e(\widetilde\Gamma^A)$, then the contribution of this edge to the inner expectation equals
\begin{align}
\bE_{\s{Bern}(q)}
\pp{\frac{\mathds{1}\{Y_e=1\}}{q^2}}
=
\frac{q}{q^2}
=
\frac1q.
\end{align}
\item otherwise the contribution equals $1$.
\end{itemize}
Since the queried edges in $\mathcal E_A$ are independent under $\pr^H_{A,\mathbb{Q}}$, the inner expectation is
\begin{align}
q^{-|e(\Gamma^A)\cap e(\widetilde\Gamma^A)\cap \mathcal E_A|}.
\end{align}
Thus
\begin{align}
1+\chi^2\p{\pr^\Gamma_{A,\mathbb{Q}},\pr^H_{A,\mathbb{Q}}}
=
\bE_{\Gamma^A\indep\widetilde\Gamma^A}
\pp{
q^{-|e(\Gamma^A)\cap e(\widetilde\Gamma^A)\cap \mathcal E_A|}
}.
\end{align}
Now use the identity
\begin{align}
q^{-m}=(1+(q^{-1}-1))^m
=
\sum_{S\subseteq [m]}(q^{-1}-1)^{|S|},
\end{align}
applied edge-by-edge. This yields
\begin{align}
q^{-|e(\Gamma^A)\cap e(\widetilde\Gamma^A)\cap \mathcal E_A|}
=
\sum_{S\subseteq \mathcal E_A}
(q^{-1}-1)^{|S|}
\mathds{1}\{S\subseteq e(\Gamma^A)\cap e(\widetilde\Gamma^A)\}.
\end{align}
Taking expectations and using independence of $\Gamma^A$ and $\widetilde\Gamma^A$, we obtain
\begin{align*}
1+\chi^2\p{\pr^\Gamma_{A,\mathbb{Q}},\pr^H_{A,\mathbb{Q}}}
&=
\sum_{S\subseteq \mathcal E_A}
(q^{-1}-1)^{|S|}
\pr\pp{S\subseteq e(\Gamma^A)}^2
\\
&=
\sum_{S\subseteq \mathcal E_A}
(q^{-1}-1)^{|S|}p^2_A(S).
\end{align*}
This proves \eqref{eq:conditional_chi2_cover}. Next, we upper bound $p_A(S)$. Fix $S\subseteq \mathcal E_A$. For each $v\in \calB_A$, define
\begin{align}
\calR_v(S)\triangleq \{u\in U:\{a_u,v\}\in S\}.
\end{align}
Let
\begin{align}
V^+(S)&\triangleq \{v\in \calB_A:\calR_v(S)\neq\emptyset\},\\
s(S)&\triangleq |V^+(S)|.
\end{align}
Thus $V^+(S)$ is the set of outside vertices actually touched by the edge-set $S$. Under the conditioning on $A$, the vertices of $W$ are embedded by a uniformly random injective map $\psi:W\hookrightarrow [n]\setminus A$. For every nonempty subset $R\subseteq U$, define
\begin{align}
\mathcal W(R)\triangleq \{w\in W:R\subseteq N_n(w)\}.
\end{align}
Suppose now that the event $S\subseteq e(\Gamma^A)$ occurs. Then for each $v\in V^+(S)$ there must exist some vertex $w_v\in \mathcal W(\calR_v(S))$ such that $\psi(w_v)=v$. Since $\psi$ is injective and the vertices $v\in V^+(S)$ are distinct, the chosen vertices $w_v$ must also be distinct. Therefore the event $\{S\subseteq e(\Gamma^A)\}$ implies the existence of an ordered family of distinct vertices $(w_v)_{v\in V^+(S)}$ such that
\begin{align}
w_v\in \mathcal W(\calR_v(S))
\quad\text{and}\quad
\psi(w_v)=v
\quad\text{for all }v\in V^+(S).
\end{align}
Now, the number of admissible ordered families $(w_v)_{v\in V^+(S)}$ is at most
\begin{align}
\prod_{v\in V^+(S)} |\mathcal W(\calR_v(S))|.
\end{align}
For each fixed admissible ordered family, since $\psi$ is a uniformly random injective map from $W$ to $[n]\setminus A$, the probability that $\psi(w_v)=v$ for all $v\in V^+(S)$ is exactly $1/(n-\tau)_{s(S)}$. Thus, by the union bound, we deduce
\begin{align}
p_A(S)
\le
\frac{\prod_{v\in V^+(S)} |\mathcal W(\calR_v(S))|}{(n-\tau)_{s(S)}}.
\end{align}
We now use the elementary inequality
\begin{align}
(m)_s\ge \p{\frac{m}{e}}^s
\quad\forall\;m\ge 1,\ 0\leq s\leq m.
\end{align}
Applying this with $m=n-\tau$ and $s=s(S)$ yields
\begin{align}
(n-\tau)_{s(S)}\ge \p{\frac{n-\tau}{e}}^{s(S)}.
\end{align}
Therefore
\begin{align}
p_A(S)
\le
\prod_{v\in V^+(S)}
\frac{e |\mathcal W(\calR_v(S))|}{n-\tau}.\label{eq:pA_sq_bound_cover}
\end{align}
Thus, if we set $c_q\triangleq q^{-1}-1$, then using \eqref{eq:conditional_chi2_cover} and \eqref{eq:pA_sq_bound_cover}, we get
\begin{align}
1+\chi^2\p{\pr^\Gamma_{A,\mathbb{Q}},\pr^H_{A,\mathbb{Q}}}
\le
\sum_{S\subseteq \mathcal E_A}
c_q^{|S|}
\prod_{v\in V^+(S)}
\frac{e^2|\mathcal W(\calR_v(S))|^2}{(n-\tau)^2}.\label{eq:pA_sq_bound_covertt}
\end{align}
Let us simplify the upper bound at the right-hand side of \eqref{eq:pA_sq_bound_covertt}. Observe that a set $S\subseteq \mathcal E_A$ is uniquely determined by the local choices
\begin{align}
\calR_v(S)&\subseteq N_A(v),\\
N_A(v)&\triangleq \{u\in U:\{a_u,v\}\in \mathbb{Q}\},
\end{align}
for any $v\in \calB_A$. Also, notice that
\begin{align}
|S|=\sum_{v\in \calB_A} |\calR_v(S)|.
\end{align}
Therefore the sum over $S$ factorizes into a product over $v\in \calB_A$ as follows
\begin{align}
1+\chi^2\p{\pr^\Gamma_{A,\mathbb{Q}},\pr^H_{A,\mathbb{Q}}}
\le\prod_{v\in \calB_A}
\p{
1+
\frac{e^2}{(n-\tau)^2}
\sum_{\emptyset\neq R\subseteq N_A(v)}
c_q^{|R|} |\mathcal W(R)|^2
}.
\end{align}
For convenience, let us extend the previous definition and set
\begin{align}
N_A(i)\triangleq \{u\in U:\{a_u,i\}\in \mathbb{Q}\},
\qquad i\in[n].
\end{align}
Then $N_A(i)=\emptyset$ whenever $i\notin \calB_A\cup A$, and in particular
\begin{align}
1+\chi^2\p{\pr^\Gamma_{A,\mathbb{Q}},\pr^H_{A,\mathbb{Q}}}
&\le
\prod_{i=1}^{n}
\p{
1+
\frac{e^2}{(n-\tau)^2}
\sum_{\emptyset\neq R\subseteq N_A(i)}
c_q^{|R|} |\mathcal W(R)|^2
}
\nonumber\\
&\le
\exp\pp{
\frac{e^2}{(n-\tau)^2}
\sum_{i=1}^{n}
\sum_{\emptyset\neq R\subseteq N_A(i)}
c_q^{|R|} |\mathcal W(R)|^2
},
\label{eq:chi2_bound_exponential_weighted}
\end{align}
where in the last inequality we used $1+x\le e^x$ for all $x\ge 0$. Define
\begin{align}
Z_{A_n}
\triangleq
\sum_{i=1}^{n}
\sum_{\emptyset\neq R\subseteq N_{A_n}(i)}
c_q^{|R|} |\mathcal W_n(R)|^2.
\label{eq:ZA_weighted_cover}
\end{align}
Then \eqref{eq:chi2_bound_exponential_weighted} yields
\begin{align}
1+\chi^2\p{\pr^\Gamma_{A_n,\mathbb{Q}},\pr^H_{A_n,\mathbb{Q}}}
\le
\exp\pp{\eta_n Z_{A_n}},
\label{eq:eta_cover_reduction0}
\end{align}
where
\begin{align}
\eta_n\triangleq \frac{e^2}{(n-\tau_n)^2}.
\label{eq:eta_cover_reduction}
\end{align}

Next, we analyze $Z_{A_n}$. Taking expectations in \eqref{eq:ZA_weighted_cover} and using Fubini's theorem,
\begin{align}
\bE[Z_{A_n}]
&=
\sum_{i=1}^{n}
\sum_{\emptyset\neq R\subseteq U_n}
c_q^{|R|} |\mathcal W_n(R)|^2
\pr\pp{R\subseteq N_{A_n}(i)}.
\label{eq:EZ_expand_cover}
\end{align}
Fix $i\in[n]$ and a nonempty set $R\subseteq U_n$ with $|R|=r$. Since the ordered tuple $(a_u)_{u\in R}$ is uniformly distributed over all ordered $r$-tuples of distinct vertices of $[n]$, and since $N_{\mathbb{Q}_n}(i)$ has cardinality $t_i^{(n)}$, we have
\begin{align}
\pr\pp{R\subseteq N_{A_n}(i)}
=
\frac{(t_i^{(n)})_r}{(n)_r}.
\label{eq:prob_R_in_NAi}
\end{align}
Substituting \eqref{eq:prob_R_in_NAi} into \eqref{eq:EZ_expand_cover}, grouping terms according to $r=|R|$, and using the definition of $\zeta_{r,n}$ in \eqref{eq:zeta_r_cover_reduction}, we obtain
\begin{align}
\bE[Z_{A_n}]
&=
\sum_{r=1}^{\tau_n}
c_q^r
\p{
\sum_{\substack{R\subseteq U_n\\ |R|=r}}
|\mathcal W_n(R)|^2
}
\p{
\sum_{i=1}^{n}\frac{(t_i^{(n)})_r}{(n)_r}
}
\nonumber\\
&=
\sum_{r=1}^{\tau_n}
c_q^r\Theta_{r,n}\zeta_{r,n}.
\label{eq:EZ_cover_exact}
\end{align}

Define
\begin{align}
\Lambda_n
\triangleq
\eta_n \bE[Z_{A_n}]
=
\frac{e^2}{(n-\tau_n)^2}
\sum_{r=1}^{\tau_n}
c_q^r\Theta_{r,n}\zeta_{r,n}.
\label{eq:Lambda_n_cover}
\end{align}
By assumption \eqref{eq:assump_remainder_negligible}, we have $\Lambda_n\to 0$.
Now define
\begin{align}
T_n\triangleq \sqrt{\frac{\bE[Z_{A_n}]}{\eta_n}}
\qquad\text{and}\qquad
\calG_n\triangleq \{Z_{A_n}\le T_n\}.
\end{align}
By Markov's inequality,
\begin{align}
\pr(\calG_n^c)
&=
\pr(Z_{A_n}>T_n)\\
&\le
\frac{\bE[Z_{A_n}]}{T_n}\\
&=
\sqrt{\eta_n \bE[Z_{A_n}]}
=
\sqrt{\Lambda_n}.
\end{align}
Hence, by \eqref{eq:Lambda_n_cover},
\begin{align}
\pr(\calG_n^c)\to 0.
\label{eq:bad_event_small_cover}
\end{align}

On the good event $\calG_n$, we have $Z_{A_n}\le T_n$, and so \eqref{eq:eta_cover_reduction0} implies that
\begin{align}
1+\chi^2\p{\pr^\Gamma_{A_n,\mathbb{Q}},\pr^H_{A_n,\mathbb{Q}}}
\le
e^{\eta_n T_n}.
\end{align}
But
\begin{align}
\eta_n T_n
=
\eta_n\sqrt{\frac{\bE[Z_{A_n}]}{\eta_n}}
=
\sqrt{\eta_n \bE[Z_{A_n}]}
=
\sqrt{\Lambda_n}
\to 0.
\end{align}
Hence
\begin{align}
\sup_{A\in \calG_n}
\chi^2\p{\pr^\Gamma_{A,\mathbb{Q}},\pr^H_{A,\mathbb{Q}}}
\to 0.
\end{align}
Therefore, uniformly over $A\in \calG_n$,
\begin{align}
d_{\s{TV}}(\pr^\Gamma_{A,\mathbb{Q}},\pr^H_{A,\mathbb{Q}})
\le
\frac12
\sqrt{
\chi^2\p{\pr^\Gamma_{A,\mathbb{Q}},\pr^H_{A,\mathbb{Q}}}
}
\to 0.\label{eqn:unifTota}
\end{align}
We now split the laws according to the event $\calG_n$. Define the good-event conditional mixtures
\begin{align}
\pr_{\calH_1,\mathbb{Q}}^{(g)}
&\triangleq 
\bE\pp{\pr^\Gamma_{A_n,\mathbb{Q}}\vert \calG_n},
\\
\pr_{H_n,\mathbb{Q}}^{(g)}
&\triangleq 
\bE\pp{\pr^H_{A_n,\mathbb{Q}}\vert \calG_n},
\end{align}
and similarly define the bad-event conditional mixtures $\pr_{\calH_1,\mathbb{Q}}^{(b)}$ and $\pr_{H_n,\mathbb{Q}}^{(b)}$. Then
\begin{align}
\pr_{\calH_1,\mathbb{Q}}
&=
(1-p)\cdot\pr_{\calH_1,\mathbb{Q}}^{(g)} + p\cdot \pr_{\calH_1,\mathbb{Q}}^{(b)},\\
\pr_{H_n,\mathbb{Q}}
&=
(1-p)\cdot\pr_{H_n,\mathbb{Q}}^{(g)} + p\cdot \pr_{H_n,\mathbb{Q}}^{(b)},
\end{align}
where $p\triangleq \pr(\calG_n^c)$. By convexity of the total variation distance,
\begin{align}
d_{\mathrm{TV}}(\pr_{\calH_1,\mathbb{Q}},\pr_{H_n,\mathbb{Q}})
&\le
(1-p) d_{\mathrm{TV}}(\pr_{\calH_1,\mathbb{Q}}^{(g)},\pr_{H_n,\mathbb{Q}}^{(g)})
+
p d_{\mathrm{TV}}(\pr_{\calH_1,\mathbb{Q}}^{(b)},\pr_{H_n,\mathbb{Q}}^{(b)})
\nonumber\\
&\le
d_{\mathrm{TV}}(\pr_{\calH_1,\mathbb{Q}}^{(g)},\pr_{H_n,\mathbb{Q}}^{(g)})
+
p\\
&\leq \pr(\calG_n^c)
+
\sup_{A\in \calG_n}
d_{\s{TV}}(\pr^\Gamma_{A,\mathbb{Q}},\pr^H_{A,\mathbb{Q}}).
\label{eq:tv_split_case2}
\end{align}
The first term tends to zero by \eqref{eq:bad_event_small_cover}, and the second term tends to zero by the uniform bound \eqref{eqn:unifTota}. Hence $d_{\s{TV}}(\pr_{\calH_1,\mathbb{Q}_n},\pr_{H_n,\mathbb{Q}_n})\to 0$, which proves \eqref{eq:goal_full_vs_aux}. Combining this with \eqref{eq:assump_core_undetectable} yields $d_{\s{TV}}(\pr_{\calH_1,\mathbb{Q}_n},\pr_{\calH_0,\mathbb{Q}_n})\to 0$.

\section{Proofs of upper bounds}\label{sec:upper_bounds}

\subsection{Query scan test}

Recall that $H$ is any fixed graph with no isolated vertices. Write $v_H\triangleq |v(H)|$ and $e_H\triangleq |e(H)|$. We will use $H$ as a \emph{witness} pattern that is guaranteed to appear inside the planted $\Gamma_n$, i.e., $H\subseteq \Gamma_n$ as an unlabeled subgraph. 

Let us upper bound the $\s{Type}$-$\s{I}$ and $\s{Type}$-$\s{II}$ error probabilities, starting with the former. Under $\calH_0$, the induced queried graph satisfies $\s{G}_S\sim \calG(M,q)$. For any fixed injective map $\varphi:v(H)\hookrightarrow S$, the event that $\varphi$ embeds $H$ as a subgraph of $\s{G}_S$ requires that all $e$ edges of $H$ are present among the corresponding $e$ queried pairs; since edges are independent $\s{Bern}(q)$ under $\calH_0$,
\begin{align}
\pr_{\calH_0}\p{\text{$\varphi$ embeds $H$ into $\s{G}_S$}}=q^{e_H}. \label{eq:embed_prob_null}
\end{align}
The number of injective maps $\varphi:v(H)\hookrightarrow S$ is exactly $|\calS_{H}| = \binom{M}{v_H}\frac{v_H!}{|\s{Aut}(H)|}\leq(M)_v$. Applying the union bound over all injective maps gives
\begin{align}
\pr_{\calH_0}(\calA_{\s{scan}}(\s{G})=1)\leq |\calS_{H}|\cdot q^{e_H}.\label{eqn:TypeIbound}
\end{align}

Next, we bound the $\s{Type}$-$\s{II}$ error probability. Under $\calH_1$, the planted copy of $\Gamma_n$ is obtained by a uniformly random injective embedding  $\Phi:v(\Gamma_n)\hookrightarrow[n]$ and then all planted edges appear. Let $\calC$ denote the set of (unlabeled) copies of $H$ in $\Gamma_n$. For each $c\in\calC$, let $W_c\subseteq v(\Gamma_n)$ be the vertex set of that copy; thus $|W_c|=v$. Define the random $v$-subset of $[n]$
\begin{align}
B_c \triangleq  \Phi(W_c)\subseteq[n].
\end{align}
Define indicators and the planted-hit count
\begin{align}
I_c \triangleq  \mathds{1}\{B_c\subseteq S\},\qquad Z_H \triangleq  \sum_{c\in\calC} I_c.
\end{align}
Note that if $Z_H\ge 1$, then at least one planted copy of $H$ lies entirely in $S$,
and since $p=1$, all edges of that copy are present; hence the scan in Algorithm~\ref{alg:witness_scan}
must output $1$. Therefore,
\begin{align}
\pr_{\calH_1}(\calA_{\s{scan}}=0)\leq\pr_{\calH_1}(Z_H=0).
\label{eq:miss_le_Z0}
\end{align}
Recall that $N(H,\Gamma_n)\triangleq |\calC|$ is the number of copies of $H$ in $\Gamma_n$. Since $\Phi$ is uniform and $S$ is a uniform $M$-subset, for every fixed $c\in\calC$,
\begin{align}
\bE[I_c] =\pr(B_c\subseteq S) =\frac{(M)_v}{(n)_v}.% \eqqcolon \alpha,
\label{eq:alpha}
\end{align}
Hence,
\begin{align}
\mu\triangleq\bE[Z_H]
= \sum_{c\in\calC} \bE[I_c]
= N(H,\Gamma_n) \frac{(M)_v}{(n)_v}.
\label{eq:mu_def}
\end{align}
To quantify overlaps, write $c\sim c'$ if $c\neq c'$ and $W_c\cap W_{c'}\neq\emptyset$. Define
\begin{align}
\Delta\triangleq\sum_{\substack{c,c'\in\calC\\ c\sim c'}} \bE[I_c I_{c'}]
= \sum_{\substack{c,c'\in\calC\\ c\sim c'}} \pr(B_c\cup B_{c'} \subseteq S).
\label{eq:Delta_def}
\end{align}
If $|W_c\cap W_{c'}|=t$ then $|B_c\cup B_{c'}|=2v-t$, and therefore
\begin{align}
\bE[I_c I_{c'}]
=\frac{(M)_{2v-t}}{(n)_{2v-t}}.
\label{eq:pair_prob}
\end{align}
Let $N_t(H,\Gamma_n)$ denote the number of \emph{ordered} pairs $(c,c')\in\calC^2$ with $c\neq c'$ and $|W_c\cap W_{c'}|=t$. Then by grouping terms in \eqref{eq:Delta_def} using \eqref{eq:pair_prob},
\begin{align}
\Delta=\sum_{t=1}^{v-1} N_t(H,\Gamma_n) \frac{(M)_{2v-t}}{(n)_{2v-t}}.
\label{eq:Delta_t}
\end{align}
%A finer (sometimes sharper) decomposition replaces $t$ by the \emph{intersection shape}: for each nonempty graph $J$ that can occur as an intersection of two $H$-copies, one may define $N_J(H,\Gamma_n)$ as the number of ordered pairs $(c,c')$ whose intersection induces $J$, obtaining
%\begin{align}
%\Delta=\sum_{J\neq\emptyset} N_J(H,\Gamma_n) \frac{(M)_{2v-|v(J)|}}{(n)_{2v-|v(J)|}}.
%\end{align}

We will use the following standard ``Janson-type'' inequality for the hypergeometric model (uniform fixed-size subset), proved via the exponential moment method together with negative dependence of sampling without replacement. For completeness, we give a self-contained derivation in Appendix~\ref{app:janson_hypergeom}.

\begin{lemma}[Janson inequality for a uniform $M$-subset]\label{lem:janson_hypergeom}
Let $S\subseteq[n]$ be uniformly random with $|S|=M$.
Let $\{B_c\}_{c\in\calC}$ be a (deterministic) family of subsets of $[n]$ and let
$I_c\triangleq \mathds{1}\{B_c\subseteq S\}$ and $Z\triangleq \sum_{c\in\calC} I_c$.
Define $\mu\triangleq \bE[Z]$ and
$\Delta\triangleq \sum_{c\sim c'}\bE[I_c I_{c'}]$, where $c\sim c'$ means $c\neq c'$ and $B_c\cap B_{c'}\neq\emptyset$.
Then
\begin{align}
\pr(Z=0)\leq \exp\p{-\frac{\mu^2}{\mu+\Delta}}.
\label{eq:janson_form}
\end{align}
\end{lemma}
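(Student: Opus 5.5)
We prove the Janson inequality for a uniform $M$-subset by the standard exponential-moment route (Boppana--Spencer style), showing that the only property of independent sampling it really needs survives under sampling without replacement. Write $\Psi(\lambda)\triangleq\bE[e^{-\lambda Z}]$ for $\lambda\ge0$. By Markov's inequality, $\pr(Z=0)\le \Psi(\lambda)$ for every $\lambda\ge 0$, so it suffices to show
\begin{align*}
-\log\Psi(\lambda)\ge \lambda\mu-\frac{\lambda^2}{2}\p{\mu+\Delta}\cdot\s{C}
\end{align*}
in a form that, after optimizing over $\lambda$, gives $\mu^2/(\mu+\Delta)$. We will aim directly for the derivative bound
\begin{align*}
-\frac{\Psi'(\lambda)}{\Psi(\lambda)}=\frac{\bE[Ze^{-\lambda Z}]}{\bE[e^{-\lambda Z}]}\ge \mu e^{-\lambda(\mu+\Delta)/\mu}.
\end{align*}
This is the classical route. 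Integrating from $0$ to $\lambda$ gives
\begin{align*}
-\log\Psi(\lambda)\ge \frac{\mu^2}{\mu+\Delta}\p{1-e^{-\lambda(\mu+\Delta)/\mu}},
\end{align*}
and letting $\lambda\to\infty$ yields \eqref{eq:janson_form}.

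To get the derivative bound, we write $\bE[Ze^{-\lambda Z}]=\sum_c \bE[I_c]\,\bE[e^{-\lambda Z}\mid I_c=1]$. For fixed $c$, we split $Z=Y_c+Z_c$, where $Y_c$ sums the $I_{c'}$ with $c'=c$ or $c'\sim c$, and $Z_c$ sums the $I_{c'}$ with $B_{c'}\cap B_c=\emptyset$. Then Jensen's inequality gives
\begin{align*}
\bE[e^{-\lambda Z}\mid I_c=1]\ge \bE[e^{-\lambda Z_c}\mid I_c=1]\,\exp\p{-\lambda\frac{\bE[Y_c e^{-\lambda Z_c}\mid I_c=1]}{\bE[e^{-\lambda Z_c}\mid I_c=1]}}.
\end{align*}
The key comparison we then need is $\bE[e^{-\lambda Z_c}\mid I_c=1]\ge \bE[e^{-\lambda Z}]$. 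After that, the exponent must be controlled by $\lambda\,\bE[Y_c\mid I_c=1]$. Summing with weights $\bE[I_c]/\mu$ and applying Jensen once more then produces $\exp(-\lambda(\mu+\Delta)/\mu)$, because $\sum_c\bE[I_c]\bE[Y_c\mid I_c=1]=\mu+\Delta$.

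The main obstacle is this correlation step. Under independent site percolation it is Harris--FKG: $e^{-\lambda Z_c}$ is decreasing and does not depend on the coordinates in $B_c$, while $Y_c$ and $I_c$ are increasing. For the uniform $M$-subset, FKG fails, so we substitute negative association of the indicator vector $(\Ind\{i\in S\})_{i\in[n]}$ (Joag-Dev--Proschan), together with the following conditioning fact. Conditioned on $B_c\subseteq S$, the law of $S\setminus B_c$ is a uniform $(M-|B_c|)$-subset of $[n]\setminus B_c$. This law is stochastically dominated by the unconditioned law of $S\cap([n]\setminus B_c)$, through an explicit monotone coupling that removes $|B_c|$ uniformly random points. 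Since $Z_c$ is increasing in $S\cap([n]\setminus B_c)$, the coupling gives $\bE[e^{-\lambda Z_c}\mid I_c=1]\ge\bE[e^{-\lambda Z_c}]\ge\bE[e^{-\lambda Z}]$.

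For the numerator we need $\bE[Y_c e^{-\lambda Z_c}\mid I_c=1]\le \bE[Y_c\mid I_c=1]\,\bE[e^{-\lambda Z_c}\mid I_c=1]$. This is negative correlation between a decreasing and an increasing function, but they depend on overlapping coordinates, so we cannot read it off directly. We will first try to drop this refinement: bound the ratio by conditioning further on $I_{c'}=1$ for each $c'\sim c$. Under that conditioning, $S$ minus $B_c\cup B_{c'}$ is again a uniform subset of smaller size, and the same monotone coupling shows that $\bE[e^{-\lambda Z_c}\mid I_c=I_{c'}=1]\le$ the corresponding quantity up to the bound we need. If this fails, we fall back on the strong Rayleigh property of uniform fixed-size subsets (Borcea--Brändén--Liggett), which gives the required negative association even after conditioning.
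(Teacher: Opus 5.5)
\textbf{There is a genuine gap, at the correlation step you flag yourself.} Most of your outline is sound. That includes the Markov bound, integrating the derivative inequality, the tilted-measure Jensen step, and the averaging over $c$, which does produce $\sum_c \bE[I_cY_c]=\mu+\Delta$. It also includes the comparison $\bE[e^{-\lambda Z_c}\mid I_c=1]\ge \bE[e^{-\lambda Z_c}]\ge \bE[e^{-\lambda Z}]$ via the monotone coupling; that coupling deletes $|B_c\setminus S|$ random points rather than $|B_c|$, but this is cosmetic. The problem is the inequality $\bE[Y_c e^{-\lambda Z_c}\mid I_c=1]\le \bE[Y_c\mid I_c=1]\,\bE[e^{-\lambda Z_c}\mid I_c=1]$. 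It is not merely unproved; it is false for a uniform $M$-subset.

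\emph{Counterexample.} Take $B_c=\{x,y\}$, $B_{c'}=\{x,z\}$, $B_{c''}=\{u,v\}$ with five distinct points. Given $I_c=1$, we have $Y_c=1+\mathds{1}\{z\in S\}$ and $Z_c=\mathds{1}\{\{u,v\}\subseteq S\}$. Since $S\setminus\{x,y\}$ is a uniform $(M-2)$-subset of $n-2$ points, these two indicators are strictly negatively correlated whenever $2\le M-2<n-2$. Hence $Y_c$ and $e^{-\lambda Z_c}=1-(1-e^{-\lambda})Z_c$ are strictly \emph{positively} correlated, which is the reverse of what you need.

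\emph{Both proposed repairs go the wrong way.} First, when $B_{c'}\setminus B_c$ avoids every $B_{c''}$ with $c''\not\sim c$, conditioning further on $I_{c'}=1$ just shrinks the residual uniform subset. Your coupling then gives $\bE[e^{-\lambda Z_c}\mid I_c=I_{c'}=1]\ge \bE[e^{-\lambda Z_c}\mid I_c=1]$, strictly in the example. Second, the strong Rayleigh property of the conditioned uniform measure only yields negative association. That says an increasing and a decreasing function of disjoint coordinate sets are positively correlated, the exact opposite of the Harris inequality the product-measure proof uses here. So the derivative bound, and with it the proof, does not go through.

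\textbf{The paper's proof does not supply the missing ingredient.} It takes a cruder route: it bounds $(1-\theta)^Z$ by $\exp(-\theta Z+\theta^2\sum_{c\sim c'}I_cI_{c'})$ and then moves the expectation inside the exponential ``by Jensen''. For the convex map $x\mapsto e^x$, Jensen gives $\bE[e^X]\ge e^{\bE X}$, so that step goes the wrong way. Its closing claim $-\mu^3/(\mu+\Delta)^2\le-\mu^2/(\mu+\Delta)$ is also reversed when $\Delta>0$.

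\textbf{A usable repair.} Your argument is the standard one for independent sampling, where Harris's inequality holds, so the cleanest fix is to transfer to that model. By your coupling, $k\mapsto \pr(Z=0 \text{ for a uniform } k\text{-subset})$ is nonincreasing. With $p=M/n$, the size of a $p$-random subset has median $M$, so $\pr(Z=0)\le 2\exp(-\mu_p^2/(\mu_p+\Delta_p))$. Here $\mu_p=\sum_c p^{|B_c|}\ge\mu$, and $\Delta_p=\sum_{c\sim c'}p^{|B_c\cup B_{c'}|}\le e^{4v^2/M}\Delta$ when all $|B_c|\le v\le M/4$. This gives $\pr(Z=0)\le 2\exp\bigl(-\mu^2/(\mu+e^{4v^2/M}\Delta)\bigr)$. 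That is weaker than the stated lemma, with a factor $2$ and an inflated $\Delta$. It still suffices for Theorem~\ref{thm:scan_upper} when $v_n^2=O(M_n)$. Proving the lemma exactly as stated would need a genuinely different correlation argument.
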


Condition on the embedding $\Phi$. Then the family $\{B_c\}_{c\in\calC}$ is deterministic,
and Lemma~\ref{lem:janson_hypergeom} applies to $Z_H$:
\begin{align}
\pr_{\calH_1}(Z_H=0\vert \Phi)
\leq \exp\p{-\frac{\mu(\Phi)^2}{\mu(\Phi)+\Delta(\Phi)}}.
\label{eq:janson_cond}
\end{align}
However, by symmetry $\mu(\Phi)$ is constant (equal to \eqref{eq:mu_def}), and likewise $\Delta(\Phi)$
is constant (equal to \eqref{eq:Delta_def}--\eqref{eq:Delta_t}); therefore the bound is uniform in $\Phi$
and we may drop conditioning. Thus, combining \eqref{eq:miss_le_Z0} with the above yields
\begin{align}
\pr_{\calH_1}(\calA_{\s{scan}}=0)\leq\exp\p{-\frac{\mu^2}{\mu+\Delta}}.
\label{eq:typeII_janson}
\end{align}
Using \eqref{eqn:TypeIbound} and \eqref{eq:typeII_janson} we finally get that
\begin{align}
\s{R}_n(\calA_{\s{scan}}) \leq |\calS_{H}|\cdot q^{e_H}+\exp\p{-\frac{\mu^2}{\mu+\Delta}}.\label{eqn:RiskScanTest}
\end{align}
Therefore, if $|\calS_{H}| q^{e_H}\to0$, $\mu\to\infty$, and $\Delta=o(\mu^2)$, then $\s{R}_n(\calA_{\s{scan}})\to0$, as $n\to\infty$.

\subsection{Degree on a cut}

Let us upper bound the $\s{Type}$-$\s{I}$ and $\s{Type}$-$\s{II}$ error probabilities, starting with the former. Under $\calH_0$, all queried edges are independent $\s{Bern}(q)$. Fix $u\in U$. Then
\begin{align}
D(u)\sim \s{Binomial}(m,q),\quad \bE_{\calH_0}[D(u)]=qm.
\end{align}
Let
\begin{align}
t \triangleq \frac{1-q}{8}\cdot d\cdot \frac{m}{n},
\qquad\text{so that}\qquad
T=qm+t.
\end{align}
Applying Lemma~\ref{lem:Bernstein} gives
\begin{align}
\pr_{\calH_0}(D(u)\ge T)
\leq \exp\p{-\frac{t^2}{2(qm+t/3)}}.
\end{align}
Since $d\leq n$ we have $t\leq \frac{1-q}{8}m\leq m/8$, hence $qm+t/3\leq (q+1/24)m\leq 2m$ for all large $n$.
Therefore
\begin{align}
\pr_{\calH_0}(D(u)\ge T)
\leq \exp\p{-c_0\frac{t^2}{m}}
= \exp\p{-c_1\frac{d^2 m}{n^2}},
\label{eq:typeI_pointwise_degcut}
\end{align}
for constants $c_0,c_1>0$ depending only on $q$. Using $m=\lceil C_1(n^2/d^2)\log n\rceil$ we obtain
\begin{align}
\frac{d^2 m}{n^2}\geq C_1\log n,
\end{align}
hence for $C_1$ sufficiently large,
\begin{align}
\pr_{\calH_0}(D(u)\ge T)\leq n^{-5}.
\end{align}
A union bound over $u\in U$ yields
\begin{align}
\pr_{\calH_0}\p{\max_{u\in U}D(u)\geq T}\leq M\cdot n^{-5}\leq n^{-3},\label{eqn:cutTypeI}
\end{align}
for all large $n$, since $M\leq n^{O(1)}$ by \eqref{eq:degcut_params_unsat}. Thus the $\s{Type}$-$\s{I}$ error probability is at most $n^{-3}$.

Next, we bound the $\s{Type}$-$\s{II}$ error probability. To this end, we need to establish a few results. First, we prove a high probability lower bound on the number of planted vertices that fall into the set $S$. Specifically, under $\calH_1$, let $K\subset[n]$ denote the planted vertex set, $|K|=k_n$.
Let
\begin{align}
N\triangleq |S\cap K|.
\end{align}
Since $S$ is a uniform $m$-subset of $[n]$,
\begin{align}
N\sim \s{Hypergeometric}(n,k_n,m),\quad \bE[N]=\frac{k_n m}{n}.
\end{align}
By Lemma~\ref{lem:ChernoffChv} with $\delta=1/2$,
\begin{align}
\pr_{\calH_1}\p{N\leq\frac{k_n m}{2n}}
\leq \exp\p{-\frac{1}{8}\cdot \frac{k_n m}{n}}
\leq \exp\p{-\frac{10}{8}\log n}\leq n^{-1},
\end{align}
using \eqref{eq:degcut_feas_unsat}. Hence with probability at least $1-n^{-1}$,
\begin{align}
N=|S\cap K|\geq \frac{k_n m}{2n}.
\label{eq:N_good_degcut}
\end{align}

Next, we prove that at least one high-degree planted vertex falls into $U$. Let $A_d\subseteq v(\Gamma_n)$ be the set of vertices of $\Gamma_n$ with degree at least $d$; by definition $|A_d|=m_d(\Gamma_n)$. Under the random embedding of $\Gamma_n$ into $[n]$, the images of $A_d$ form a subset $K_{\ge d}\subseteq K$ with $|K_{\ge d}|=m_d(\Gamma_n)$. Because $U$ is a uniform $M$-subset of $[n]\setminus S$, the random variable
\begin{align}
W\triangleq |U\cap K_{\ge d}|
\end{align}
is hypergeometric with mean
\begin{align}
\bE[W] = M\cdot \frac{|K_{\ge d}|}{n-m}.
\end{align}
Using $m\leq n/2$ from \eqref{eq:degcut_feas_unsat} gives $n-m\ge n/2$ and hence
\begin{align}
\bE[W]\geq M\cdot \frac{m_d(\Gamma_n)}{n}.
\end{align}
With $M=\lceil C_2(n/m_d)\log n\rceil$ this implies $\bE[W]\ge C_2\log n$ for all large $n$. Applying Lemma~\ref{lem:ChernoffChv} to $W$ with $\delta=1/2$ yields
\begin{align}
\pr_{\calH_1}(W=0)
\leq \pr_{\calH_1}\p{W\leq \frac12\bE[W]}
\leq \exp\p{-\frac18\bE[W]}
\leq n^{-2},
\end{align}
for $C_2$ sufficiently large. Therefore, with probability at least $1-n^{-2}$ there exists $u^\star\in U\cap K_{\ge d}$.

Next, we would like to count the number of planted neighbors of $u^\star$ inside $S$. We condition on the (as shown above) high probable event that $u^\star\in U\cap K_{\ge d}$ exists and that \eqref{eq:N_good_degcut} holds. Let $x\in v(\Gamma_n)$ be the preimage of $u^\star$; then $\deg_{\Gamma_n}(x)\ge d$. Now, let $Z$ be the number of planted vertices in $S$ that are neighbors of $u^\star$ in the planted copy. Condition on $N=|S\cap K|$ and on $u^\star\in K$. Then $S\cap K$ is a uniform $N$-subset of $K$, so
\begin{align}
Z\vert N\sim \s{Hypergeometric}(k_n-1,\deg_{\Gamma_n}(x),N),
\quad
\bE[Z\vert N]=\deg_{\Gamma_n}(x)\cdot \frac{N}{k_n-1}.
\end{align}
Using $\deg_{\Gamma_n}(x)\ge d$ and $k_n-1\leq k_n$ gives
\begin{align}
\bE[Z\vert N]\geq  \frac{dN}{k_n}.
\end{align}
Combining with \eqref{eq:N_good_degcut},
\begin{align}
\bE[Z\vert N]\geq\frac{dm}{2n}.
\label{eq:EZ_lower_degcut}
\end{align}
Applying Lemma~\ref{lem:ChernoffChv} to $Z$ conditional on $N$ with $\delta=1/2$ yields
\begin{align}
\pr_{\calH_1}\p{\left.Z\leq \frac{dm}{4n}\right|N}
\leq \exp\p{-\frac18\bE[Z\vert N]}
\leq \exp\p{-c_2 d\frac{m}{n}}.
\end{align}
Because $m\asymp (n^2/d^2)\log n$ we have $d\frac{m}{n}\to\infty$. Therefore, with probability at least $1-n^{-2}$ on the good events,
\begin{align}
Z\geq \frac{dm}{4n}.
\label{eq:Z_good_degcut}
\end{align}

We are now in a position to bound the $\s{Type}$-$\s{II}$ error probability. On the event \eqref{eq:Z_good_degcut}, among the $m$ queried edges from $u^\star$ to $S$, exactly $Z$ are planted edges and hence present with probability $1$, and the remaining $m-Z$
are non-planted edges present with probability $q$ independently. Thus conditional on $Z$,
\begin{align}
D(u^\star)= Z + \widetilde X,
\end{align}
where $\widetilde X\sim \s{Binomial}(m-Z,q)$, and is independent of $Z$. Hence
\begin{align}
\bE[D(u^\star)\vert Z] = Z + q(m-Z)=qm + (1-q)Z.
\end{align}
Using \eqref{eq:Z_good_degcut} yields
\begin{align}
\bE[D(u^\star)\vert Z]\geq qm + (1-q)\cdot \frac{dm}{4n}.
\end{align}
Since our threshold is $T=qm+(1-q)\frac{dm}{8n}$, we have a margin of size
\begin{align}
\bE[D(u^\star)\vert Z]-T\geq \frac{1-q}{8}\frac{dm}{n}.
\end{align}
Now,
\begin{align}
D(u^\star)<T
\;\Rightarrow\;\widetilde X<q(m-Z)-\frac{1-q}{8}\frac{dm}{n}.
\end{align}
Using the facts that $(m-Z)\leq m$ and $Z\ge \frac14 d(m/n)$, by applying Bernstein's inequality to the lower tail of $\widetilde X$, we obtain
\begin{align}
\pr_{\calH_1}(D(u^\star)<T\vert Z)
\leq \exp\p{-c_3\frac{\p{d\frac{m}{n}}^2}{m}}
= \exp\p{-c_4 \frac{d^2 m}{n^2}},
\end{align}
for constants $c_3,c_4>0$ depending only on $q$. Using $m=\lceil C_1(n^2/d^2)\log n\rceil$ gives $\frac{d^2 m}{n^2}\ge C_1\log n$, hence for $C_1$ large,
\begin{align}
\pr_{\calH_1}(D(u^\star)<T\vert Z)\leq n^{-3}.
\end{align}
Taking a union bound over the complements of the good events, and this conditional tail bound, yields
\begin{align}
\pr_{\calH_1}\pp{\max_{u\in U}D(u)<T}
\leq n^{-1}+n^{-2}+n^{-2}+n^{-3}= O(n^{-1}).\label{eqn:cutTypeII}
\end{align}
Thus the $\s{Type}$-$\s{II}$ error probability is at most $n^{-1}$. 

Combining \eqref{eqn:cutTypeI} and \eqref{eqn:cutTypeII}, we get
\begin{align}
\mathsf{R}_n(\calA_{\s{cut}})= O(n^{-1}),
\end{align}
for all sufficiently large $n$. Finally, we analyze the resulted query complexity. The algorithm makes exactly $Q_{\s{cut}}=Mm$ queries. Using \eqref{eq:degcut_params_unsat},
\begin{align}
Q_{\s{cut}}\leq C\cdot \frac{n}{m_d(\Gamma_n)}\log n\cdot \frac{n^2}{d^2}\log n
= C\cdot \frac{n^3}{m_d(\Gamma_n)d^2}\log^2 n,
\end{align}
for a constant $C=C(q)>0$. Hence any budget $Q$ satisfying $\s{Q}_n\ge \s{Q}_{\s{cut}}$ can run $\calA_{\s{cut}}$ within budget.
Optimizing over $d$ yields $Q\geq C\frac{n^3}{\kappa(\Gamma_n)}\log^2 n$ as claimed.

\subsection{Count test}

Let us upper bound the $\s{Type}$-$\s{I}$ and $\s{Type}$-$\s{II}$ error probabilities, starting with the former. Let $\mathbb{Q}_n\subseteq \binom{[n]}{2}$ be sampled uniformly at random among all subsets of cardinality $\s{Q}_n$, independently of the graph, and define
\begin{align}
\s{X}_{\s{count}}
\triangleq
\sum_{e\in \mathbb{Q}_n}\mathds{1}\{e\in e(\s{G})\}.
\end{align}
For a sequence $\lambda_n\to\infty$, consider the test
\begin{align}
\calA_{\s{count}}
\triangleq
\mathds{1}\left\{
\s{X}_{\s{count}}
\ge
q\s{Q}_n+\lambda_n\sqrt{q(1-q)\s{Q}_n}
\right\}.
\end{align}
Under $\calH_0$, conditional on $\mathbb{Q}_n$, the queried edge indicators are i.i.d. $\s{Bern}(q)$. Hence $\s{X}_{\s{count}}\mid \mathbb{Q}_n\sim
\s{Binomial}(\s{Q}_n,q)$. Therefore, by Bernstein's inequality,
\begin{align}
\pr_{\calH_0}\p{\calA_{\s{count}}=1\vert \mathbb{Q}_n}
&=
\pr_{\calH_0}\pp{\left.
\s{X}_{\s{count}}
\ge
q\s{Q}_n+\lambda_n\sqrt{q(1-q)\s{Q}_n}
\right| \mathbb{Q}_n}
\le
\exp\p{-c_q\lambda_n^2}.
\end{align}
Taking expectation over $\mathbb{Q}_n$, we obtain
\begin{align}
\pr_{\calH_0}\p{\calA_{\s{count}}=1}
\le
\exp\p{-c_q\lambda_n^2}
=o(1).
\end{align}

We now bound the $\s{Type}$-$\s{II}$ error. Let $\Gamma_n^\star$ denote the planted copy of $\Gamma_n$, and write
\begin{align}
Z_n
\triangleq
|e(\Gamma_n^\star)\cap \mathbb{Q}_n|
\end{align}
for the number of planted edges queried by the algorithm. Conditional on $\Gamma_n^\star$ and $\mathbb{Q}_n$, the variable $\s{X}_{\s{count}}$ is a sum of independent Bernoulli random variables, with parameter $p$ on the $Z_n$ planted queried edges and parameter $q$ on the remaining $\s{Q}_n-Z_n$ queried edges. Hence
\begin{align}
\bE_{\calH_1}\pp{\s{X}_{\s{count}}\vert \Gamma_n^\star,\mathbb{Q}_n}
&=
q\s{Q}_n+(p-q)Z_n,\quad
\s{Var}_{\calH_1}\pp{\s{X}_{\s{count}}\vert \Gamma_n^\star,\mathbb{Q}_n}
\le
\s{Q}_n.
\end{align}
Let $T_n
\triangleq
q\s{Q}_n+\lambda_n\sqrt{q(1-q)\s{Q}_n}$. On the event
\begin{align}
\mathcal G_n
\triangleq
\left\{
(p-q)Z_n
\ge
2\lambda_n\sqrt{q(1-q)\s{Q}_n}
\right\},
\end{align}
we have
\begin{align}
\bE_{\calH_1}\pp{\s{X}_{\s{count}}\vert \Gamma_n^\star,\mathbb{Q}_n}
-T_n
\ge
\frac12(p-q)Z_n.
\end{align}
Therefore, conditional on $\Gamma_n^\star$ and $\mathbb{Q}_n$, Bernstein's inequality gives
\begin{align}
\pr_{\calH_1}\p{
\s{X}_{\s{count}}<T_n
\vert
\Gamma_n^\star,\mathbb{Q}_n}
&\le
\exp\p{
-c\frac{(p-q)^2Z_n^2}{\s{Q}_n}
}
\; \s{on}\; \mathcal G_n,
\end{align}
where $c>0$ is an absolute constant. Consequently,
\begin{align}
\pr_{\calH_1}\p{\calA_{\s{count}}=0}
\le
\bE\pp{
\exp\p{
-c\frac{(p-q)^2Z_n^2}{\s{Q}_n}
}
\mathds{1}_{\mathcal G_n}
}
+
\pr(\mathcal G_n^c).
\label{eq:count_typeII_split}
\end{align}

It remains to show that the right-hand side is $o(1)$. Conditional on $\Gamma_n^\star$, since $\mathbb{Q}_n$ is a uniform $\s{Q}_n$-subset of $\binom{[n]}{2}$, the random variable $Z_n$ has a hypergeometric distribution, i.e.,
\begin{align}
Z_n\vert \Gamma_n^\star
\sim
\s{Hypergeometric}\p{
\binom n2,|e(\Gamma_n)|,\s{Q}_n
}.
\end{align}
In particular,
\begin{align}
\bE[Z_n\vert \Gamma_n^\star]
=\s{Q}_n\frac{|e(\Gamma_n)|}{\binom n2}
\triangleq
\alpha_n.
\end{align}
By the Chernoff bound for hypergeometric random variables,
\begin{align}
\pr\p{\left.
Z_n\le \frac{\alpha_n}{2}
\right| \Gamma_n^\star
}
\le
\exp\p{-c\alpha_n}.
\end{align}
Thus
\begin{align}
\pr\p{
Z_n\le \frac{\alpha_n}{2}
}
\le
\exp\p{-c\alpha_n}.
\label{eq:count_Z_concentration}
\end{align}
Assume now that
\begin{align}
\frac{(p-q)^2}{q(1-q)}
\cdot
\s{Q}_n
\p{\frac{|e(\Gamma_n)|}{\binom n2}}^2
\to \infty.
\label{eq:edgecount_snr_clean}
\end{align}
Equivalently,
\begin{align}
(p-q)\alpha_n
\gg
\sqrt{q(1-q)\s{Q}_n}.
\end{align}
Choose $\lambda_n\to\infty$ sufficiently slowly so that
\begin{align}
(p-q)\alpha_n
\gg
\lambda_n\sqrt{q(1-q)\s{Q}_n}.
\label{eq:lambda_slow_count}
\end{align}
Then, on the event $Z_n\ge \alpha_n/2$, condition \eqref{eq:lambda_slow_count} implies $\mathcal G_n$ for all sufficiently large $n$. Hence, by \eqref{eq:count_Z_concentration},
\begin{align}
\pr(\mathcal G_n^c)
\le
\pr\p{Z_n<\frac{\alpha_n}{2}}
=o(1).
\end{align}
Moreover, on the same event,
\begin{align}
\frac{(p-q)^2Z_n^2}{\s{Q}_n}
\ge
\frac{(p-q)^2\alpha_n^2}{4\s{Q}_n}
\frac14
(p-q)^2\s{Q}_n
\p{\frac{|e(\Gamma_n)|}{\binom n2}}^2
\to \infty
\end{align}
by \eqref{eq:edgecount_snr_clean}. Therefore the expectation term in \eqref{eq:count_typeII_split} also tends to zero. We conclude that
\begin{align}
\pr_{\calH_1}\p{\calA_{\s{count}}=0}=o(1).
\end{align}
Combining the $\s{Type}$-$\s{I}$ and $\s{Type}$-$\s{II}$ bounds yields
\begin{align}
\s{R}_n(\calA_{\s{count}};\mathbb{Q}_n)\to 0.
\end{align}

\section{Corollaries}

In this section we derive several corollaries from the general lower bounds derived in the previous subsections, by specializing to specific families of subgraphs, starting with subgraphs with a bounded vertex cover number.

\subsection{Proof of Corollary~\ref{cor:weighted_cover_reduction_simplified_main}}
The proof of Corollary~\ref{cor:weighted_cover_reduction_simplified_main} follows from the observation below.
\begin{lemma}
\label{lem:zeta_basic_bound}
For every integer $r\in\{1,\ldots,\tau_n\}$,
\begin{align}
\zeta_{r,n}(\mathbb{Q})
\le
\frac{2\s{Q}_n}{n}.
\label{eq:zeta_basic_bound}
\end{align}
\end{lemma}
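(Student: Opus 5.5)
The plan is to bound each summand of $\zeta_{r,n}$ by its $r=1$ analogue and then use the handshake identity for the query graph. The key elementary fact is that the ratio of falling factorials is monotone in $r$. Since $t_i^{(n)}\le n-1<n$, for every $r\ge 1$ with $(t_i^{(n)})_r>0$ we can write
\begin{align}
\frac{(t_i^{(n)})_r}{(n)_r}
=\prod_{j=0}^{r-1}\frac{t_i^{(n)}-j}{n-j}
\le \frac{t_i^{(n)}}{n},
\end{align}
because each factor $\frac{t_i^{(n)}-j}{n-j}$ lies in $[0,1]$. This holds as long as $t_i^{(n)}\le n$, which is automatic. When $t_i^{(n)}<r$, the falling factorial vanishes and the inequality is trivial, so no case split is really needed beyond noting nonnegativity.

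Next, summing over $i\in[n]$ gives
\begin{align}
\zeta_{r,n}(\mathbb{Q})\le \frac{1}{n}\sum_{i=1}^n t_i^{(n)}(\mathbb{Q}) .
\end{align}
Here $t_i^{(n)}$ is precisely the degree of $i$ in the graph on $[n]$ with edge set $\mathbb{Q}$. By the handshake lemma, $\sum_i t_i^{(n)}=2|\mathbb{Q}|\le 2\s{Q}_n$, and this yields \eqref{eq:zeta_basic_bound}.

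Finally, for Corollary~\ref{cor:weighted_cover_reduction_simplified_main}, substitute this bound into \eqref{eq:assump_remainder_negligible}. Because the bound is uniform over all $\mathbb{Q}_n$ with $|\mathbb{Q}_n|\le\s{Q}_n$, the supremum over query sets is at most
\begin{align}
\frac{2\s{Q}_n}{n(n-\tau_n)^2}\sum_{r}(q^{-1}-1)^r\Theta_{r,n},
\end{align}
which tends to zero under \eqref{eq:assump_weighted_simplified}. We use that all terms are nonnegative, since $q^{-1}-1>0$ and $\Theta_{r,n}\ge0$.

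There is no real obstacle here. The only point needing care is the monotonicity of the product of ratios, which requires $t_i^{(n)}\le n$ and $j<n$. Both hold since $t_i^{(n)}\le n-1$ and $r\le\tau_n\le n$.
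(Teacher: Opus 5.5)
Your proof is correct and is essentially the paper's argument. The paper bounds $(t_i^{(n)})_r \le t_i^{(n)}(n-1)_{r-1}$, which is the same as your factor-wise bound $\frac{t_i^{(n)}-j}{n-j}\le 1$ for $j\ge 1$; this gives $\zeta_{r,n}\le \frac{1}{n}\sum_i t_i^{(n)}$, and the handshake identity $\sum_i t_i^{(n)}=2|\mathbb{Q}_n|$ finishes the proof.
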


\begin{proof}
Fix $r\in\{1,\ldots,\tau_n\}$ and $i\in[n]$. Since $t_i^{(n)}\le n-1$, we have
\begin{align}
(t_i^{(n)})_r
\le
t_i^{(n)}(n-1)_{r-1}.
\end{align}
Therefore
\begin{align}
\zeta_{r,n}
=
\sum_{i=1}^n
\frac{(t_i^{(n)})_r}{(n)_r}
\le
\sum_{i=1}^n
\frac{t_i^{(n)}(n-1)_{r-1}}{(n)_r}
=
\frac{1}{n}
\sum_{i=1}^n t_i^{(n)}.
\end{align}
Since $\sum_{i=1}^n t_i^{(n)}=2\s{Q}_n$, the claim follows.
\end{proof}

By Lemma~\ref{lem:zeta_basic_bound},
\begin{align}
\sum_{r=1}^{\tau_n}(q^{-1}-1)^r\Theta_{r,n}\zeta_{r,n}
\le
\frac{2\s{Q}_n}{n}
\sum_{r=1}^{\tau_n}(q^{-1}-1)^r\Theta_{r,n}.
\end{align}
Hence \eqref{eq:assump_weighted_simplified} implies condition \eqref{eq:assump_remainder_negligible} in Theorem~\ref{thm:cover_reduction_query_lb}. The conclusion now follows directly from Theorem~\ref{thm:cover_reduction_query_lb}.

\subsection{Proof of Corollary~\ref{prop:pendant_layer_tight_main}}

To prove Corollary~\ref{prop:pendant_layer_tight_main}, it is convenient to first establish the following impossibility result for star forests.
\begin{corollary}[Star forests]
\label{cor:star_forest_weighted}
Fix $q\in(0,1)$ and let $\Gamma_n=\bigsqcup_{j=1}^{r_n} K_{1,d_{j,n}}$, be a disjoint union of stars, where the degrees $d_{j,n}\ge 1$ are arbitrary. Let $S_{2,n}\triangleq \sum_{j=1}^{r_n} d_{j,n}^2$.
If
\begin{align}
\frac{\s{Q}_n S_{2,n}}{n(n-r_n)^2}\to 0,
\label{eq:starforest_condition_exact}
\end{align}
then weak detection is impossible. In particular, if $r_n=o(n)$ and
\begin{align}
\s{Q}_n = o\p{\frac{n^3}{S_{2,n}}},
\label{eq:starforest_condition_asymp}
\end{align}
then weak detection is impossible.
\end{corollary}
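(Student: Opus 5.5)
We apply Corollary~\ref{cor:weighted_cover_reduction_simplified_main} (i.e., Theorem~\ref{thm:cover_reduction_query_lb} with the bound $\zeta_{r,n}\le 2\s{Q}_n/n$ from Lemma~\ref{lem:zeta_basic_bound}), choosing as vertex cover $U_n$ the set of the $r_n$ star centers. One caveat: a star $K_{1,1}$ is a single edge with no canonical center, so we fix one endpoint as its center. With this choice $U_n$ covers every edge, $\tau_n=r_n$, and the cover-induced core $H_n=\Gamma_n[U_n]$ is edgeless, since distinct stars are vertex-disjoint and centers are never adjacent to each other. Consequently the auxiliary model $\pr_{H_n,\mathbb{Q}_n}$ plants nothing and coincides with $\pr_{\calH_0,\mathbb{Q}_n}$. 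Condition~\eqref{eq:assump_core_undetectable} then holds trivially, because the total variation distance is identically zero for every query set.

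The next step is to compute $\Theta_{r,n}$. Every leaf $w\in W_n$ of the $j$-th star has $N_n(w)=\{c_j\}$, a single center. Hence for $r\ge 2$ we have $\binom{|N_n(w)\cap N_n(w')|}{r}=0$, so $\Theta_{r,n}=0$. For $r=1$, $|N_n(w)\cap N_n(w')|$ equals $1$ exactly when $w,w'$ are leaves of the same star (including $w=w'$), and $0$ otherwise. Equivalently, using the form $\Theta_{1,n}=\sum_{u\in U_n}|\mathcal W_n(\{u\})|^2$ from \eqref{eq:Theta_r_cover_reduction}, we get
\begin{align}
\Theta_{1,n}=\sum_{j=1}^{r_n} d_{j,n}^2=S_{2,n}.
\end{align}
Therefore the quantity in \eqref{eq:assump_weighted_simplified} equals $(q^{-1}-1)\,\s{Q}_n S_{2,n}/\big(n(n-r_n)^2\big)$. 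Since $q$ is fixed, this tends to zero precisely under \eqref{eq:starforest_condition_exact}, and the first claim follows.

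For the ``in particular'' part, $r_n=o(n)$ gives $(n-r_n)^2\ge n^2/4$ for large $n$. The expression is then at most $4(q^{-1}-1)\s{Q}_nS_{2,n}/n^3$, which tends to $0$ under \eqref{eq:starforest_condition_asymp}.

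There is no real obstacle here. The only points requiring care are confirming that the edgeless core makes the auxiliary law exactly the null, and handling the degenerate case $W_n=\emptyset$. That case is impossible, since every star has at least one leaf.
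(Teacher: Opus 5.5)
Your proposal is correct and follows the paper's proof. You take the star centers as the cover, so the core is edgeless and $\pr_{H_n,\mathbb{Q}_n}=\pr_{\calH_0,\mathbb{Q}_n}$; you get $\Theta_{1,n}=S_{2,n}$ and $\Theta_{r,n}=0$ for $r\ge 2$; and the single surviving remainder term is a constant times $\s{Q}_nS_{2,n}/(n(n-r_n)^2)$. The paper differs only cosmetically: it plugs $\zeta_{1,n}=2\s{Q}_n/n$ directly into Theorem~\ref{thm:cover_reduction_query_lb} instead of citing Corollary~\ref{cor:weighted_cover_reduction_simplified_main}, which amounts to the same thing here.
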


\begin{proof}
Choose $U_n$ to consist of the star centers, and let $W_n$ be the set of all leaves. Then $U_n$ is a vertex cover, $\tau_n=r_n$, and the induced graph $H_n=\Gamma_n[U_n]$ is edgeless. Therefore
\begin{align}
\pr_{H_n,\mathbb{Q}_n}=\pr_{\calH_0,\mathbb{Q}_n},
\end{align}
so the core-undetectability condition is automatic. Next, let us compute the quantities $\Theta_{r,n}$. Since each leaf has exactly one neighbor in the cover, for every nonempty $R\subseteq U_n$ with $|R|\ge 2$ we have
\begin{align}
\mathcal W_n(R)=\emptyset.
\end{align}
On the other hand, if $u_j\in U_n$ is the center of the $j$th star, then
\begin{align}
|\mathcal W_n(\{u_j\})|=d_{j,n}.
\end{align}
Therefore
\begin{align}
\Theta_{1,n}
=
\sum_{j=1}^{r_n} d_{j,n}^2
=
S_{2,n},
\qquad
\Theta_{r,n}=0,\quad r\ge 2.
\end{align}
By Theorem~\ref{thm:cover_reduction_query_lb}, it remains only to verify that
\begin{align}
\frac{e^2}{(n-r_n)^2}(q^{-1}-1)\Theta_{1,n}\zeta_{1,n}\to 0.
\end{align}
Now $\zeta_{1,n}=2\s{Q}_n/n$, so
\begin{align}
\frac{e^2}{(n-r_n)^2}(q^{-1}-1)\Theta_{1,n}\zeta_{1,n}
=
\frac{2e^2(q^{-1}-1)\s{Q}_nS_{2,n}}{n(n-r_n)^2},
\end{align}
which tends to $0$ by \eqref{eq:starforest_condition_exact}. This proves the first claim. The second claim follows immediately from \eqref{eq:starforest_condition_exact} when $r_n=o(n)$.
\end{proof}
We are now ready to establish the lower bound in Proposition~\ref{prop:pendant_layer_tight_main} for pendant-leaf graphs; we restate it here for convenience.
\begin{corollary}[Pendant-leaf graphs]
\label{cor:pendant_layer_weighted}
Fix $q\in(0,1)$ and let $(\Gamma_n)_{n\ge 1}$ be a sequence of graphs with a vertex cover $U_n$ such that:
\begin{enumerate}
\item $H_n=\Gamma_n[U_n]$ is edgeless;
\item every vertex $w\in W_n\triangleq V_n\setminus U_n$ has exactly one neighbor in $U_n$.
\end{enumerate}
For each $u\in U_n$, let
\begin{align}
d_{W_n}(u)\triangleq |\{w\in W_n:\{u,w\}\in e(\Gamma_n)\}|.
\end{align}
If
\begin{align}
\frac{\s{Q}_n}{n(n-\tau_n)^2}
\sum_{u\in U_n} d_{W_n}^2(u)
\to 0,
\label{eq:pendant_layer_condition}
\end{align}
then weak detection is impossible.
\end{corollary}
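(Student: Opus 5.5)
We apply Theorem~\ref{thm:cover_reduction_query_lb}, or more conveniently Corollary~\ref{cor:weighted_cover_reduction_simplified_main}, with the given vertex cover $U_n$. The argument parallels the proof of Corollary~\ref{cor:star_forest_weighted}. The only difference is that the graph need not literally be a star forest: vertices in $U_n$ may have no pendant leaves ($d_{W_n}(u)=0$). This does not matter, since the cover-based bound only depends on the neighborhood structure $N_n(w)$, $w\in W_n$.

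\textbf{Core condition.} By hypothesis~1, the core $H_n=\Gamma_n[U_n]$ is edgeless. Hence the auxiliary model $\pr_{H_n,\mathbb{Q}_n}$ plants no edges, and it coincides with $\pr_{\calH_0,\mathbb{Q}_n}$ for every query set $\mathbb{Q}_n$. Condition \eqref{eq:assump_core_undetectable} therefore holds trivially: the supremum of the total variation distances is identically zero.

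\textbf{Computing $\Theta_{r,n}$.} By hypothesis~2, for every $w\in W_n$ the set $N_n(w)$ is a singleton, say $\{u(w)\}$. Consequently:
\begin{itemize}
\item For $r\ge 2$, every $w,w'$ give $|N_n(w)\cap N_n(w')|\le 1$, so $\binom{|N_n(w)\cap N_n(w')|}{r}=0$ and $\Theta_{r,n}=0$.
\item For $r=1$, using the representation $\Theta_{1,n}=\sum_{u\in U_n}|\mathcal W_n(\{u\})|^2$ with $\mathcal W_n(\{u\})=\{w\in W_n: u(w)=u\}$, we have $|\mathcal W_n(\{u\})|=d_{W_n}(u)$.
\end{itemize}
Hence $\Theta_{1,n}=\sum_{u\in U_n}d_{W_n}^2(u)$. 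The same value follows from the pairwise form, since the pair $(w,w')$ contributes $1$ exactly when $u(w)=u(w')$.

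\textbf{Remainder condition.} Substituting into \eqref{eq:assump_weighted_simplified}, the sum over $r$ collapses to the single term
\[
\frac{\s{Q}_n}{n(n-\tau_n)^2}(q^{-1}-1)\sum_{u\in U_n}d_{W_n}^2(u).
\]
This tends to $0$ by \eqref{eq:pendant_layer_condition}, since $q$ is fixed. Corollary~\ref{cor:weighted_cover_reduction_simplified_main}, which rests on Lemma~\ref{lem:zeta_basic_bound} giving $\zeta_{r,n}\le 2\s{Q}_n/n$ uniformly over query sets, then yields condition \eqref{eq:assump_remainder_negligible}. Theorem~\ref{thm:cover_reduction_query_lb} gives impossibility of weak detection.

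\textbf{Main obstacle.} There is essentially no real obstacle. The one point needing care is the degenerate case $W_n=\emptyset$ (equivalently $\tau_n=|V_n|$), which the theorem's proof already handles. One should also check that $n-\tau_n>0$, so the expression in \eqref{eq:pendant_layer_condition} is well defined. If $\tau_n=n$, then $W_n=\emptyset$ and the graph is edgeless, contradicting that $\Gamma_n$ has no isolated vertices. So whenever $\Gamma_n$ has edges, $W_n\neq\emptyset$ and $\tau_n\le n-1$.
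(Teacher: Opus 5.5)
Your proof is correct and is essentially the paper's argument. The edgeless core makes the auxiliary model coincide with the null; hypothesis~2 kills $\Theta_{r,n}$ for $r\ge 2$ and gives $\Theta_{1,n}=\sum_{u\in U_n}d_{W_n}^2(u)$; and the remainder is controlled via $\zeta_{1,n}\le 2\s{Q}_n/n$, which at $r=1$ is the same as the exact identity $\zeta_{1,n}=2|\mathbb{Q}_n|/n$ used in the paper's star-forest proof. Your side remark that some $u\in U_n$ may have $d_{W_n}(u)=0$ is vacuous under the standing no-isolated-vertices assumption, since an edgeless core forces every neighbor of $u$ into $W_n$, but it is harmless.
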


\begin{proof}
Since $H_n$ is edgeless, the core-undetectability condition is automatic. Also, because every $w\in W_n$ has exactly one neighbor in $U_n$, we have
\begin{align}
\Theta_{1,n}
=
\sum_{u\in U_n} d_{W_n}^2(u),
\quad
\Theta_{r,n}=0,\quad r\ge 2.
\end{align}
Thus the conclusion follows exactly as in the proof of Corollary~\ref{cor:star_forest_weighted}.
\end{proof}
The rest of Corollary~\ref{prop:pendant_layer_tight_main} follows from the following result.

\begin{prop}
\label{prop:tightness_star_forest_pendant}
Fix $q\in(0,1)$.
\begin{enumerate}
    \item[(a)] Let $\Gamma_n=\bigsqcup_{j=1}^{r_n} K_{1,d_{j,n}}$, $\Delta_n\triangleq \max_{1\le j\le r_n} d_{j,n}$, and $S_{2,n}\triangleq \sum_{j=1}^{r_n} d_{j,n}^2$. Recall that $\kappa_n
\triangleq\max_{d\ge 1} m_d(\Gamma_n)d^2$ and $m_d(\Gamma_n)\triangleq |\{j:\ d_{j,n}\ge d\}|$. 
Then
\begin{align}
\kappa_n
\le
S_{2,n}
\le
4\p{1+\lfloor \log_2 \Delta_n\rfloor}\kappa_n.
\label{eq:S2_vs_kappa_starforest}
\end{align}
In particular:
\begin{enumerate}
\item[1.] Corollary~\ref{cor:star_forest_weighted} implies that weak detection is impossible whenever $\s{Q}_n=o\p{\frac{n^3}{S_{2,n}}}$.
\item[2.] If the feasibility conditions of Theorem~\ref{thm:degree_cut_general_unsat_main} hold for the maximizing degree level in the definition of $\kappa_n$, then Theorem~\ref{thm:degree_cut_general_unsat_main} implies that strong detection is possible whenever $\s{Q}_n\ge C\frac{n^3}{\kappa_n}\log^2 n$ for a sufficiently large constant $C=C(q)$.
\end{enumerate}
Hence, in the class of star forests, the lower and upper bounds differ by at most a factor of order $\log \Delta_n\cdot \log^2 n$.
\item[(b)] Let $(\Gamma_n)_{n\ge 1}$ be a sequence of graphs with a vertex cover $U_n$ such that:
\begin{enumerate}
\item $H_n=\Gamma_n[U_n]$ is edgeless;
\item every vertex $w\in W_n\triangleq V_n\setminus U_n$ has exactly one neighbor in $U_n$.
\end{enumerate}\sloppy
For each $u\in U_n$, let $d_{W_n}(u)\triangleq |\{w\in W_n:\{u,w\}\in e(\Gamma_n)\}|$, $\Delta_n\triangleq \max_{u\in U_n} d_{W_n}(u)$, and $S_{2,n}\triangleq \sum_{u\in U_n} d_{W_n}(u)^2$. Recall that $\kappa_n\triangleq \max_{d\ge 1} m_d(\Gamma_n)d^2$, where $m_d(\Gamma_n)$ counts vertices of $\Gamma_n$ of degree at least $d$. Then
\begin{align}
\kappa_n
\le
S_{2,n}
\le
4\p{1+\lfloor \log_2 \Delta_n\rfloor}\kappa_n.
\label{eq:S2_vs_kappa_pendant}
\end{align}
Consequently, whenever the feasibility conditions of Theorem~\ref{thm:degree_cut_general_unsat_main} hold at the maximizing degree level, the lower bound of Corollary~\ref{cor:pendant_layer_weighted} and the upper bound of Theorem~\ref{thm:degree_cut_general_unsat_main} match up to a factor of order $\log \Delta_n\cdot \log^2 n$. In particular, if the quantities $d_{W_n}(u)$ are all of the same order, then the two bounds match up to a factor of $\log^2 n$.
\end{enumerate}
\end{prop}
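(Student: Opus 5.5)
The plan is to prove the two-sided comparison between $S_{2,n}$ and $\kappa_n$ by a direct counting argument. The ``in particular'' consequences then follow by plugging this comparison into Corollary~\ref{cor:star_forest_weighted} and Corollary~\ref{cor:pendant_layer_weighted} (lower bounds) and into Theorem~\ref{thm:degree_cut_general_unsat_main} (upper bound). I would treat part (a) first and then check that the same argument goes through in part (b), keeping track of how $m_d(\Gamma_n)$ is defined in each part.

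\emph{Lower inequality $\kappa_n\le S_{2,n}$.} In part (a), $m_d(\Gamma_n)=|\{j:d_{j,n}\ge d\}|$. Hence for every $d\ge1$,
\begin{align}
m_d(\Gamma_n)d^2=\sum_{j:\,d_{j,n}\ge d}d^2\le\sum_{j:\,d_{j,n}\ge d}d_{j,n}^2\le S_{2,n},
\end{align}
and taking the maximum over $d$ gives the claim.

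In part (b), $m_d(\Gamma_n)$ counts all vertices of $\Gamma_n$. Since $H_n$ is edgeless and $W_n$ is independent, every $u\in U_n$ has $\deg_{\Gamma_n}(u)=d_{W_n}(u)$. Every $w\in W_n$ has degree exactly $1$. So for $d\ge2$ the leaves do not contribute, and the same computation applies verbatim. The level $d=1$ needs a separate check, because there $m_1(\Gamma_n)=|U_n|+|W_n|$. Since $\Gamma_n$ has no isolated vertices, every $u\in U_n$ has $d_{W_n}(u)\ge1$. This gives $|U_n|\le S_{2,n}$ and $|W_n|=\sum_u d_{W_n}(u)\le S_{2,n}$, so only $m_1\le 2S_{2,n}$ holds.

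I expect this $d=1$ edge case to be the only real subtlety. A perfect matching shows the constant $1$ can genuinely fail at $d=1$, since there $m_1=2|U_n|$ while $S_{2,n}=|U_n|$. I would therefore either restrict the maximum in $\kappa_n$ to cover vertices (equivalently, to $d\ge2$ when $\Delta_n\ge2$), or state the lower inequality as $\kappa_n\le 2S_{2,n}$. Either choice only changes absolute constants in the consequences.

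\emph{Upper inequality via a dyadic decomposition.} Let $L\triangleq\lfloor\log_2\Delta_n\rfloor$. For $k=0,\dots,L$, let $\mathcal{J}_k$ be the set of stars in (a), or cover vertices $u$ in (b), whose degree lies in $[2^k,2^{k+1})$. Every star or cover vertex has degree between $1$ and $\Delta_n$, so it lies in exactly one $\mathcal{J}_k$. Within a layer, each squared degree is less than $4^{k+1}$. Moreover $|\mathcal{J}_k|\le m_{2^k}(\Gamma_n)$; in (b) this holds because cover vertices of degree at least $2^k$ are counted by $m_{2^k}$. Therefore
\begin{align}
\sum_{j\in\mathcal{J}_k}d_j^2\le 4\cdot(2^k)^2\, m_{2^k}(\Gamma_n)\le 4\kappa_n.
\end{align}
Summing over the $L+1$ layers gives $S_{2,n}\le4(1+\lfloor\log_2\Delta_n\rfloor)\kappa_n$.

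\emph{Consequences.} Item 1 of (a) is exactly Corollary~\ref{cor:star_forest_weighted}, using $r_n=o(n)$ so that $(n-r_n)^2\asymp n^2$; the analogous statement in (b) is Corollary~\ref{cor:pendant_layer_weighted}. Item 2 follows from Theorem~\ref{thm:degree_cut_general_unsat_main} applied at the degree level $d$ maximizing $m_d d^2$. At that level the query count is $\s{Q}_{\s{cut}}=Mm\lesssim n^3\log^2 n/(m_d d^2)=n^3\log^2n/\kappa_n$.

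Dividing the upper-bound scale $n^3\log^2 n/\kappa_n$ by the lower-bound scale $n^3/S_{2,n}$ gives a ratio of $(S_{2,n}/\kappa_n)\log^2 n\lesssim\log\Delta_n\cdot\log^2 n$. When all $d_{W_n}(u)$ are of the same order, only $O(1)$ dyadic layers are nonempty. In that case $S_{2,n}\asymp\kappa_n$ and the gap reduces to $\log^2 n$.
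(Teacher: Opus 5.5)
Your proposal is correct and is essentially the paper's proof. The lower inequality comes from the pointwise bound $m_d(\Gamma_n)d^2\le S_{2,n}$, the upper one from the dyadic layering $|I_\ell|\le m_{2^\ell}(\Gamma_n)$, and the consequences are read off from Corollary~\ref{cor:star_forest_weighted}, Corollary~\ref{cor:pendant_layer_weighted} and Theorem~\ref{thm:degree_cut_general_unsat_main}. Your $d=1$ caveat in part (b) is a correct catch that the paper's ``applies verbatim'' overlooks: there $m_1(\Gamma_n)$ also counts the leaves, so a perfect matching has $\kappa_n=2S_{2,n}$, and this only affects constants since $d=1$ is never a feasible cut level. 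Similarly, item 1 does not need the assumption $r_n=o(n)$: each star has at least two vertices, so $r_n\le n/2$ and $(n-r_n)^2\ge n^2/4$ automatically.
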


\begin{proof}
We prove the two parts separately.

\paragraph{Proof of (a).}
We first prove \eqref{eq:S2_vs_kappa_starforest}. The lower bound $\kappa_n\le S_{2,n}$ is immediate: for every integer $d\ge 1$,
\begin{align}
S_{2,n}
=
\sum_{j=1}^{r_n} d_{j,n}^2
\ge
\sum_{j:\, d_{j,n}\ge d} d_{j,n}^2
\ge
m_d(\Gamma_n)d^2.
\end{align}
Taking the maximum over $d$ yields $\kappa_n\le S_{2,n}$.

For the upper bound, partition the indices $j$ according to dyadic degree classes. For each integer $\ell\in\{0,1,\ldots,\lfloor \log_2 \Delta_n\rfloor\}$, define
\begin{align}
I_\ell
\triangleq
\{j:\ 2^\ell\le d_{j,n}<2^{\ell+1}\}.
\end{align}
Then
\begin{align}
S_{2,n}
=
\sum_{\ell=0}^{\lfloor \log_2 \Delta_n\rfloor}
\sum_{j\in I_\ell} d_{j,n}^2
\le
\sum_{\ell=0}^{\lfloor \log_2 \Delta_n\rfloor}
|I_\ell|\,2^{2\ell+2}.
\end{align}
Now, for every $j\in I_\ell$ we have $d_{j,n}\ge 2^\ell$, so
\begin{align}
|I_\ell|
\le
m_{2^\ell}(\Gamma_n).
\end{align}
Therefore
\begin{align}
S_{2,n}
&\le
4\sum_{\ell=0}^{\lfloor \log_2 \Delta_n\rfloor}
m_{2^\ell}(\Gamma_n)\,2^{2\ell}
\le
4\p{1+\lfloor \log_2 \Delta_n\rfloor}\kappa_n.
\end{align}
This proves \eqref{eq:S2_vs_kappa_starforest}.

Now Corollary~\ref{cor:star_forest_weighted} gives
\begin{align}
\s{Q}_n=o\p{\frac{n^3}{S_{2,n}}}
\quad\Longrightarrow\quad
\text{weak detection is impossible}.
\end{align}
By Theorem~\ref{thm:degree_cut_general_unsat_main}, provided the feasibility conditions hold at the maximizing degree level in the definition of $\kappa_n$, strong detection is possible whenever
\begin{align}
\s{Q}_n\ge C\frac{n^3}{\kappa_n}\log^2 n,
\end{align}
for a sufficiently large constant $C=C(q)$. Since $\kappa_n\le S_{2,n}\le O(\log \Delta_n)\kappa_n$, the lower and upper bounds differ by at most a factor of order $\log \Delta_n\cdot \log^2 n$.

\paragraph{Proof of (b).}
Because $H_n$ is edgeless and every vertex in $W_n$ has exactly one neighbor in $U_n$, the graph $\Gamma_n$ is precisely a star forest whose centers are the vertices of $U_n$ and whose star degrees are the quantities $\{d_{W_n}(u):u\in U_n\}$. Therefore the same dyadic argument used in part (a) applies verbatim, with the collection $\{d_{j,n}\}$ replaced by $\{d_{W_n}(u):u\in U_n\}$. This yields
\begin{align}
\kappa_n
\le
S_{2,n}
\le
4\p{1+\lfloor \log_2 \Delta_n\rfloor}\kappa_n,
\end{align}
which is \eqref{eq:S2_vs_kappa_pendant}.

The lower bound of Corollary~\ref{cor:pendant_layer_weighted} states that
\begin{align}
\s{Q}_n=o\p{\frac{n^3}{S_{2,n}}}
\quad\Longrightarrow\quad
\text{weak detection is impossible}.
\end{align}
The upper bound of Theorem~\ref{thm:degree_cut_general_unsat_main} gives strong detection whenever
\begin{align}
\s{Q}_n\ge C\frac{n^3}{\kappa_n}\log^2 n,
\end{align}
provided the feasibility conditions hold at the maximizing degree level. Since $S_{2,n}$ and $\kappa_n$ differ by at most a logarithmic factor in $\Delta_n$, the lower and upper bounds match up to a factor of order $\log \Delta_n\cdot \log^2 n$. Finally, if the quantities $d_{W_n}(u)$ are all of the same order, then $S_{2,n}\asymp \kappa_n$, and thus the lower and upper bounds differ only by the $\log^2 n$ factor from Theorem~\ref{thm:degree_cut_general_unsat_main}.
\end{proof}

\subsection{Proof of Corollary~\ref{cor:path_weighted_main}}

Write the path vertices as $1,2,\ldots,k_n$ in their natural order, and choose
\begin{align}
U_n\triangleq \{2,4,6,\ldots\}\cap [k_n].
\end{align}
Then $U_n$ is a vertex cover, $W_n=V_n\setminus U_n$ is independent, and the induced graph $H_n=\Gamma_n[U_n]$ is edgeless. Therefore
\begin{align}
\pr_{H_n,\mathbb{Q}_n}=\pr_{\calH_0,\mathbb{Q}_n},
\end{align}
so the core-undetectability condition is automatic.

Next, we bound $\Theta_{r,n}$. Every vertex $w\in W_n$ has at most two neighbors in $U_n$, hence
\begin{align}
\Theta_{r,n}=0,\qquad r\ge 3.
\end{align}
Also, for each $u\in U_n$, the set $\mathcal W_n(\{u\})$ contains at most the two path-neighbors of $u$, so
\begin{align}
|\mathcal W_n(\{u\})|\le 2.
\end{align}
Therefore
\begin{align}
\Theta_{1,n}
=
\sum_{u\in U_n} |\mathcal W_n(\{u\})|^2
\le
4|U_n|
\le
2k_n.
\label{eq:path_theta1}
\end{align}
Similarly, if $R\subseteq U_n$ has cardinality $2$, then $\mathcal W_n(R)$ contains at most one vertex, namely the unique odd-indexed vertex whose two neighbors are the two vertices of $R$, if such a vertex exists. Hence
\begin{align}
|\mathcal W_n(R)|\le 1
\qquad\forall R\subseteq U_n,\ |R|=2,
\end{align}
and the number of such relevant pairs is at most $|W_n|$. Thus
\begin{align}
\Theta_{2,n}\le |W_n|\le k_n.
\label{eq:path_theta2}
\end{align}
Combining \eqref{eq:path_theta1} and \eqref{eq:path_theta2}, and using Corollary~\ref{cor:weighted_cover_reduction_simplified_main}, it is enough to check that
\begin{align}
\frac{\s{Q}_n}{n(n-\tau_n)^2}
\Bigl((q^{-1}-1)\Theta_{1,n}+(q^{-1}-1)^2\Theta_{2,n}\Bigr)\to 0.
\end{align}
Since $\tau_n\le k_n/2\le n/2$ for all sufficiently large $n$, we have $n-\tau_n\asymp n$. Using \eqref{eq:path_theta1} and \eqref{eq:path_theta2}, the left-hand side is
\begin{align}
O_q\p{\frac{\s{Q}_n k_n}{n^3}},
\end{align}
which tends to $0$ whenever $\frac{\s{Q}_n k_n}{n^3}\to 0$. This proves the first claim. Finally, if $\s{Q}_n=\binom n2$, then $\s{Q}_n\asymp n^2$, and $\frac{\s{Q}_n k_n}{n^3}\to 0$ reduces to $k_n/n\to 0$, namely $k_n=o(n)$.

\subsection{Proof of Corollary~\ref{cor:triangles_full_observation_main}}

We again apply Theorem~\ref{thm:cover_reduction_query_lb}. Since detection with the full graph is at least as powerful as detection from any set of non-adaptive queries, it is enough to consider the full-observation query set
\begin{align}
\mathbb{Q}_n=\binom{[n]}{2}.
\end{align}
For each triangle, choose two of its vertices and put them into the cover $U_n$; let the remaining one vertex belong to $W_n$. Then $U_n$ is a vertex cover. The induced graph $H_n=\Gamma_n[U_n]$ contains exactly one edge from each triangle and no edges between different triangles. Therefore $H_n$ is a matching with exactly $m_n$ edges.

We first verify that the core $H_n$ is undetectable under full observation when $m_n=o(n)$. Since $H_n$ is a matching,
\begin{align}
|e(H_n)|=m_n,
\quad
d_{\s{max}}(H_n)=1,
\quad
\mu(H_n)=\frac12.
\end{align}
Thus $H_n$ lies in the sublogarithmic-density regime, and the sharp full-observation lower bound for arbitrary planted graphs applies to $H_n$: because
\begin{align}
\mu(H_n)=\frac12=o(\log |v(H_n)|),
\qquad
|e(H_n)|=m_n=o(n),
\qquad
d^2_{\s{max}}(H_n)=1=o(n),
\end{align}
we obtain
\begin{align}
d_{\mathrm{TV}}(\pr_{H_n,\mathbb{Q}_n},\pr_{0,\mathbb{Q}_n})\to 0.
\end{align}
So the core-undetectability condition of Theorem~\ref{thm:cover_reduction_query_lb} holds. It remains to check the weighted remainder condition.

Let us compute the quantities $\Theta_{r,n}$ and $\zeta_{r,n}$. Each vertex $w\in W_n$ belongs to exactly one triangle, and its neighborhood inside the cover consists of the two chosen cover vertices from that triangle. Hence, for every singleton $R=\{u\}\subseteq U_n$, there is exactly one vertex in $W_n$ adjacent to $u$, and for every pair $R\subseteq U_n$ corresponding to the two chosen cover vertices from one of the triangles, there is exactly one vertex in $W_n$ whose neighborhood contains $R$. For every other subset $R\subseteq U_n$ with $|R|\ge 3$, or with $|R|=2$ but not corresponding to one triangle, the set $\mathcal W_n(R)$ is empty. Therefore
\begin{align}
\Theta_{1,n}=2m_n=\frac{2k_n}{3},
\qquad
\Theta_{2,n}=m_n=\frac{k_n}{3},
\qquad
\Theta_{r,n}=0,\quad r\ge 3.
\label{eq:triangle_Theta_values}
\end{align}

Next, since $\mathbb{Q}_n=\binom{[n]}{2}$, every query degree equals $t_i^{(n)}=n-1$. Hence, for every $r\in\{1,\ldots,\tau_n\}$,
\begin{align}
\zeta_{r,n}
=
\sum_{i=1}^n \frac{(t_i^{(n)})_r}{(n)_r}
=
n\cdot \frac{(n-1)_r}{(n)_r}
=
n-r.
\end{align}
In particular,
\begin{align}
\zeta_{1,n}=n-1,
\qquad
\zeta_{2,n}=n-2.
\label{eq:triangle_zeta_values}
\end{align}

Finally, $\tau_n=2m_n=2k_n/3$, so
\begin{align}
n-\tau_n=n-\frac{2k_n}{3}.
\end{align}
In the regime $k_n=o(n)$, we have $n-\tau_n\sim n$. Combining \eqref{eq:triangle_Theta_values} and \eqref{eq:triangle_zeta_values}, the weighted remainder term in Theorem~\ref{thm:cover_reduction_query_lb} is
\begin{align}
&\frac{e^2}{(n-\tau_n)^2}
\sum_{r=1}^{\tau_n}
(q^{-1}-1)^r\Theta_{r,n}\zeta_{r,n}
\nonumber\\
&\qquad=
\frac{e^2}{(n-\tau_n)^2}
\Bigl(
(q^{-1}-1)\Theta_{1,n}\zeta_{1,n}
+
(q^{-1}-1)^2\Theta_{2,n}\zeta_{2,n}
\Bigr)
\nonumber\\
&\qquad=
\frac{e^2}{(n-\tau_n)^2}
\Bigl(
(q^{-1}-1)\frac{2k_n}{3}(n-1)
+
(q^{-1}-1)^2\frac{k_n}{3}(n-2)
\Bigr)
\nonumber\\
&\qquad=
O_q\p{\frac{k_n n}{(n-\tau_n)^2}}
=
O_q\p{\frac{k_n}{n}}
\to 0.
\end{align}
Therefore the weighted remainder condition of Theorem~\ref{thm:cover_reduction_query_lb} holds. The conclusion now follows from Theorem~\ref{thm:cover_reduction_query_lb}.

\subsection{Proof of Corollary~\ref{cor:trees_full_observation_main}}

We again apply Theorem~\ref{thm:cover_reduction_query_lb}. Since detection with the full graph is at least as powerful as detection from any set of non-adaptive queries, it is enough to consider the full-observation query set
\begin{align}
\mathbb{Q}_n=\binom{[n]}{2}.
\end{align}
Every tree is bipartite. Let $v(T_n)=A_n\sqcup B_n$ be a bipartition of the tree, and choose $U_n$ to be the smaller side of the bipartition. Then every edge of $T_n$ joins a vertex of $U_n$ to a vertex of $W_n\triangleq v(T_n)\setminus U_n$ so $U_n$ is a vertex cover and $H_n=T_n[U_n]$ is edgeless. Consequently
\begin{align}
\pr_{H_n,\mathbb{Q}_n}=\pr_{0,\mathbb{Q}_n},
\end{align}
so the core-undetectability condition is automatic. It remains to verify the weighted remainder condition.

We first bound the quantities $\Theta_{r,n}$. Since $d_{\s{max}}(T_n)\le D$, for every $w\in W_n$ we have $|N_n(w)|\le D$. Therefore, for every $r>D$ and every subset $R\subseteq U_n$ with $|R|=r$, we have $\mathcal W_n(R)=\emptyset$. Hence
\begin{align}
\Theta_{r,n}=0,\quad r>D.
\label{eq:Theta_tree_zero}
\end{align}
Also, for every $r\in\{1,\ldots,D\}$,
\begin{align}
\Theta_{r,n}
&=
\sum_{w,w'\in W_n}
\binom{|N_n(w)\cap N_n(w')|}{r}
\nonumber\\
&\le
\sum_{w,w'\in W_n}
\binom{|N_n(w)\cap N_n(w')|}{1}
=
\sum_{w,w'\in W_n}|N_n(w)\cap N_n(w')|.
\label{eq:Theta_tree_first}
\end{align}
Now note that
\begin{align}
\sum_{w,w'\in W_n}|N_n(w)\cap N_n(w')|
=
\sum_{u\in U_n} d_{W_n}^2(u),
\label{eq:Theta_tree_second}
\end{align}
where
\begin{align}
d_{W_n}(u)\triangleq \abs{\{w\in W_n:\{u,w\}\in e(T_n)\}}.
\end{align}
Since $d_{W_n}(u)\le D$ for every $u\in U_n$, we get
\begin{align}
\sum_{u\in U_n} d_{W_n}^2(u)
\le
D\sum_{u\in U_n} d_{W_n}(u).
\end{align}
But
\begin{align}
\sum_{u\in U_n} d_{W_n}(u)=|e(T_n)|=k_n-1.
\end{align}
Therefore
\begin{align}
\Theta_{r,n}\le D(k_n-1)=O_D(k_n),
\qquad r=1,\ldots,D.
\label{eq:Theta_tree_final}
\end{align}

Next, let us compute $\zeta_{r,n}$ under full observation. Since $\mathbb{Q}_n=\binom{[n]}{2}$, every query degree equals $t_i^{(n)}=n-1$. Hence, for every $r\in\{1,\ldots,\tau_n\}$,
\begin{align}
\zeta_{r,n}
=
\sum_{i=1}^n \frac{(t_i^{(n)})_r}{(n)_r}
=
n\cdot \frac{(n-1)_r}{(n)_r}
=
n-r
\le n.
\label{eq:zeta_tree_full_obs}
\end{align}
Now combine \eqref{eq:Theta_tree_zero}, \eqref{eq:Theta_tree_final}, and \eqref{eq:zeta_tree_full_obs}. The weighted remainder term in Theorem~\ref{thm:cover_reduction_query_lb} is
\begin{align}
\frac{e^2}{(n-\tau_n)^2}
\sum_{r=1}^{\tau_n}
(q^{-1}-1)^r\Theta_{r,n}\zeta_{r,n}&=
\frac{e^2}{(n-\tau_n)^2}
\sum_{r=1}^{D}
(q^{-1}-1)^r\Theta_{r,n}\zeta_{r,n}
\\
&\le
\frac{e^2}{(n-\tau_n)^2}
\sum_{r=1}^{D}
(q^{-1}-1)^r \cdot O_D(k_n)\cdot n
\\
&=
O_{q,D}\p{\frac{nk_n}{(n-\tau_n)^2}}.
\end{align}
Because $|U_n|\le k_n/2$, we have $n-\tau_n\ge n-\frac{k_n}{2}$. In the regime $k_n=o(n)$, we have $n-\tau_n\sim n$. Hence
\begin{align}
\frac{e^2}{(n-\tau_n)^2}
\sum_{r=1}^{\tau_n}
(q^{-1}-1)^r\Theta_{r,n}\zeta_{r,n}
=
O_{q,D}\p{\frac{k_n}{n}}
\to 0.
\end{align}
Thus the weighted remainder condition of Theorem~\ref{thm:cover_reduction_query_lb} holds, and the conclusion follows.

\subsection{Proof of Corollary~\ref{cor:bounded_vertex_cover_weighted_main}}

Fix $n$. Choose once and for all a minimum vertex cover $U_n=\{u_1,\dots,u_{\tau_n}\}\subseteq v(\Gamma_n)$, and let $W_n\triangleq v(\Gamma_n)\setminus U_n$. Since $U_n$ is a vertex cover, the set $W_n$ is independent in $\Gamma_n$, i.e., $e(\Gamma_n)\cap \binom{W_n}{2}=\emptyset$. Hence every edge of $\Gamma_n$ is either inside $U_n$ or between $U_n$ and $W_n$. Define $H_n\triangleq \Gamma_n[U_n]$. Then $H_n$ has at most $\tau_0$ vertices and at most $R_0\triangleq \binom{\tau_0}{2}$ edges. Let $R_n\triangleq |e(H_n)|\le R_0$. Take two independent planted copies of $H_n$, and let $\s{X}^{(H)}_{\mathbb{Q}_n}
$ denote the number of queried edges that belong to both planted copies. We have
\begin{align}
1+\chi^2(\pr_{H_n,\mathbb{Q}_n},\pr_{0,\mathbb{Q}_n})
=
\bE\left[q^{-\s{X}^{(H)}_{\mathbb{Q}_n}}\right].
\label{eq:bounded_cover_chi2_identity}
\end{align}
Since
\begin{align}
0\le \s{X}^{(H)}_{\mathbb{Q}_n}\le R_n\le R_0,
\end{align}
we have
\begin{align}
q^{-\s{X}^{(H)}_{\mathbb{Q}_n}}-1
\le
(q^{-R_0}-1)\mathds{1}\{\s{X}^{(H)}_{\mathbb{Q}_n}\ge 1\}
\le
(q^{-R_0}-1)\s{X}^{(H)}_{\mathbb{Q}_n}.
\end{align}
Taking expectations and using \eqref{eq:bounded_cover_chi2_identity},
\begin{align}
\chi^2(\pr_{H_n,\mathbb{Q}_n},\pr_{0,\mathbb{Q}_n})
\le
(q^{-R_0}-1)\bE\pp{\s{X}^{(H)}_{\mathbb{Q}_n}}.
\label{eq:bounded_cover_chi2_reduce}
\end{align}
We now compute $\bE[\s{X}^{(H)}_{\mathbb{Q}_n}]$.
For each queried edge $e\in\mathbb{Q}_n$, by symmetry of the uniformly random injective embedding of $H_n$ into $[n]$,
\begin{align}
\pr[e\in e(H_n^\star)]=\frac{R_n}{\binom n2},
\end{align}
where $H_n^\star$ denotes one planted copy of $H_n$. Thus, for two independent copies,
\begin{align}
\pr(e\in e(H_n^\star)\cap e(\widetilde H_n^\star))
=
\left(\frac{R_n}{\binom n2}\right)^2.
\end{align}
Therefore
\begin{align}
\bE[\s{X}^{(H)}_{\mathbb{Q}_n}]
&=
\sum_{e\in\mathbb{Q}_n}
\pr(e\in e(H_n^\star)\cap e(\widetilde H_n^\star))
\\
&=
\s{Q}_n\left(\frac{R_n}{\binom n2}\right)^2
\le
\binom n2\left(\frac{R_0}{\binom n2}\right)^2
=
\frac{R_0^2}{\binom n2}.
\label{eq:bounded_cover_EX}
\end{align}
Since $\binom n2\to\infty$, we obtain $\bE[\s{X}^{(H)}_{\mathbb{Q}_n}]\to 0$. Combining this with \eqref{eq:bounded_cover_chi2_reduce}, we get $\chi^2(\pr_{H_n,\mathbb{Q}_n},\pr_{0,\mathbb{Q}_n})\to 0$, and hence $d_{\mathrm{TV}}(\pr_{H_n,\mathbb{Q}_n},\pr_{0,\mathbb{Q}_n})\to0$.

It remains to verify the remainder condition. Since $\tau_n\le \tau_0$, every $w\in W_n$ satisfies $|N_n(w)|\le \tau_0$, and therefore
\begin{align}
\Theta_{r,n}=0,\qquad r>\tau_0.
\end{align}
Also,
\begin{align}
\Theta_{1,n}
=
\sum_{u\in U_n} d_{W_n}(u)^2
\le
\tau_0\Delta_n^2.
\end{align}
More generally, for each fixed $r\in\{1,\ldots,\tau_0\}$,
\begin{align}
\Theta_{r,n}
=
\sum_{\substack{R\subseteq U_n\\ |R|=r}}
|\mathcal W_n(R)|^2
\le
\binom{\tau_0}{r}\Delta_n^2,
\end{align}
because $|\mathcal W_n(R)|\le \Delta_n$ for every nonempty $R\subseteq U_n$. Hence
\begin{align}
\sum_{r=1}^{\tau_n}(q^{-1}-1)^r\Theta_{r,n}
\le
C_{q,\tau_0}\Delta_n^2
\end{align}
for a constant $C_{q,\tau_0}<\infty$. Since $\tau_n\le \tau_0$, we have $n-\tau_n\asymp n$, and Corollary~\ref{cor:weighted_cover_reduction_simplified_main} gives
\begin{align}
\frac{2e^2\s{Q}_n}{n(n-\tau_n)^2}
\sum_{r=1}^{\tau_n}(q^{-1}-1)^r\Theta_{r,n}
=
O\p{\frac{\s{Q}_n\Delta_n^2}{n^3}}.
\end{align}
This tends to $0$ by assumption, and the conclusion follows.

\subsection{Proof of Corollary~\ref{cor:cliques_direct_main}}

Choose the vertex cover $U_n\triangleq v(\Gamma_n)$, that is, choose the entire vertex set. Then
$W_n=\emptyset$ and $H_n=\Gamma_n$. Hence $\Xi_n=0$, so the remainder condition in Theorem~\ref{thm:cover_reduction_query_lb} is automatic. Since $H_n=\Gamma_n$, the auxiliary planted model $\pr_{H_n,\mathbb{Q}_n}$ is simply the original planted model for the clique. Now apply the direct edge-hit lower bound to $H_n=K_{k_n}$: if
\begin{align}
\s{Q}_n\cdot \frac{|e(H_n)|}{\binom n2}
= \s{Q}_n\cdot \frac{\binom{k_n}{2}}{\binom n2}
\to 0,
\end{align}
then $d_{\mathrm{TV}}(\pr_{H_n,\mathbb{Q}_n},\pr_{0,\mathbb{Q}_n})\to 0$. Thus the core-undetectability assumption of Theorem~\ref{thm:cover_reduction_query_lb} holds, and the conclusion follows.

\subsection{Proof of Corollary~\ref{cor:clique_pendant_main}}

Choose the vertex cover $U_n$ to be the $k_n$ clique vertices. Then $H_n=\Gamma_n[U_n]=K_{k_n}$, 
while every vertex in $W_n=v(\Gamma_n)\setminus U_n$ is a pendant leaf adjacent to exactly one vertex of the core.

We first verify the core condition in
Theorem~\ref{thm:cover_reduction_query_lb}. Since
\begin{align}
H_n=K_{k_n},
\end{align}
Corollary~\ref{cor:cliques_direct_main} implies that
\begin{align}
d_{\mathrm{TV}}
(
\pr_{H_n,\mathbb Q_n},
\pr_{\calH_0,\mathbb Q_n}
)
\to0
\end{align}
whenever
\begin{align}
Q_n=o\left(\frac{n^2}{k_n^2}\right).
\end{align}

It remains to analyze the attachment term.
Since every vertex of $W_n$ has exactly one neighbor in the core,
\begin{align}
\Theta_{1,n}
=
k_n\Delta_n^2,
\qquad
\Theta_{r,n}=0,
\quad r\ge2.
\end{align}
Furthermore, by Lemma~\ref{lem:zeta_basic_bound},
\begin{align}
\zeta_{1,n}
\le
\frac{2Q_n}{n}.
\end{align}
Therefore
\begin{align}
\frac{1}{(n-k_n)^2}
\sum_{r=1}^{k_n}
(q^{-1}-1)^r
\Theta_{r,n}\zeta_{r,n}
=
O\!\left(
\frac{Q_nk_n\Delta_n^2}{n^3}
\right),
\end{align}
where we used $k_n\le n$ and hence $(n-k_n)^2\asymp n^2$ throughout the nontrivial regime.

Consequently,
condition~\eqref{eq:assump_remainder_negligible}
holds whenever
\begin{align}
Q_n
=
o\left(
\frac{n^3}{k_n\Delta_n^2}
\right).
\end{align}
Applying
Theorem~\ref{thm:cover_reduction_query_lb}
now yields
\begin{align}
Q_n
=
o\left(
\min\left\{
\frac{n^2}{k_n^2},
\,
\frac{n^3}{k_n\Delta_n^2}
\right\}
\right)
\Longrightarrow
d_{\mathrm{TV}}
(
\pr_{\calH_1,\mathbb Q_n},
\pr_{\calH_0,\mathbb Q_n}
)
\to0,
\end{align}
which is equivalent to weak detection being impossible.

\subsection{Proof of Corollary~\ref{prop:complete_bipartite_tradeoff_main}}

We prove Corollary~\ref{prop:complete_bipartite_tradeoff_main}, restating it here for convenience with additional details.

\begin{corollary}
\label{prop:complete_bipartite_tradeoff}
Fix $q\in(0,1)$ and let $\Gamma_n = K_{a_n,b_n}$, $1\le a_n\le b_n$, and $k_n\triangleq a_n+b_n$. 
Then the following hold.
\begin{enumerate}
    \item[(i)] \underline{Universal lower bound.} If
\begin{align}
\s{Q}_n=o\p{\frac{n^2}{a_n b_n}},
\label{eq:biclique_lower_bound}
\end{align}
then weak detection is impossible.
\item[(ii)] \underline{Degree-on-a-cut upper bound.} Assume that the feasibility conditions of Theorem~\ref{thm:degree_cut_general_unsat_main} hold with $d=b_n$, namely
\begin{align}
\ceil{C_1\frac{n^2}{b_n^2}\log n}\le \frac n2,
\qquad
(a_n+b_n)\cdot \frac{1}{n}\ceil{C_1\frac{n^2}{b_n^2}\log n}\ge 10\log n,
\label{eq:biclique_cut_feasibility}
\end{align}
for the constant $C_1=C_1(q)$ from Theorem~\ref{thm:degree_cut_general_unsat_main}. Then there exists $C_2=C_2(q)$ such that
\begin{align}
\s{Q}_n\ge C_2\frac{n^3}{a_n b_n^2}\log^2 n
\label{eq:biclique_cut_upper_bound}
\end{align}
implies strong detection.
\item[(iii)] \underline{Scan upper bound.} Assume that there exists an absolute constant $\rho<\infty$ such that
\begin{align}
b_n\le \rho a_n
\label{eq:biclique_ratio_condition}
\end{align}
for all sufficiently large $n$, and that
\begin{align}
a_n\ge C_0\log n
\label{eq:biclique_left_large}
\end{align}
for a sufficiently large constant $C_0=C_0(q,\rho)$. Then there exists $C_3=C_3(q,\rho)$ such that
\begin{align}
\s{Q}_n\ge C_3 \frac{n^2}{a_n b_n}\log^4 n
\label{eq:biclique_scan_upper_bound}
\end{align}
implies strong detection via the witness-scan test.
\end{enumerate}
\end{corollary}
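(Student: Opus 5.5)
The proof splits into the three parts of the statement, and each part invokes one earlier result.

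\emph{Part (i).} We apply Theorem~\ref{thm:edge_hit_lower_bound} directly. Since $|e(K_{a_n,b_n})|=a_nb_n$, the hypothesis $\s{Q}_n=o(n^2/(a_nb_n))$ is exactly \eqref{eq:edge_hit_condition}, so weak detection is impossible. No cover decomposition is needed.

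\emph{Part (ii).} We invoke Theorem~\ref{thm:degree_cut_general_unsat_main} with $d=b_n$. In $K_{a_n,b_n}$ the $a_n$ vertices of the smaller side have degree $b_n$, and the others have degree $a_n\le b_n$. Hence $m_{b_n}(\Gamma_n)\ge a_n$, with $m_{b_n}=a_n+b_n$ when $a_n=b_n$. Since $|v(\Gamma_n)|=a_n+b_n$, the feasibility conditions \eqref{eq:degcut_feas_unsat} with $d=b_n$ are precisely \eqref{eq:biclique_cut_feasibility}. The theorem then gives a query set of size
\[
Mm\lesssim \frac{n}{a_n}\log n\cdot\frac{n^2}{b_n^2}\log n .
\]
The $\min\{n,\cdot\}$ in the definition of $M$ only helps. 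This yields \eqref{eq:biclique_cut_upper_bound} with a suitable $C_2(q)$, and any larger budget can run the same test.

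\emph{Part (iii).} This is the main work. We apply Theorem~\ref{thm:scan_upper} with witness $H_n=K_{h,h}$, where $h=\lceil C\log n\rceil\le a_n$; this is possible for $C_0\ge C$. Write $v=2h$, $e=h^2$, and $M\asymp\sqrt{\s{Q}_n}$, which we may assume is at most $n$.

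The null condition $M^{2h}q^{h^2}\to0$ amounts to $2h\log M<h^2\log(1/q)$. Because $M\le n$, this holds once $C>2/\log(1/q)$.

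For the mean, the number of labeled copies satisfies $N(H,\Gamma_n)\ge \binom{a_n}{h}\binom{b_n}{h}\ge (a_n/h)^h(b_n/h)^h$. Therefore
\[
\mu_n\gtrsim \Bigl(\frac{a_nb_nM^2}{h^2n^2}\cdot c\Bigr)^{h},
\]
after bounding $(M)_{2h}/(n)_{2h}\ge ((M-2h)/n)^{2h}$. With $M^2\asymp \s{Q}_n\ge C_3 n^2\log^4 n/(a_nb_n)$, the base is at least a constant times $C_3\log^4n/h^2\gtrsim C_3\log^2 n$. So $\mu_n\ge 2^{h}\to\infty$ once $C_3$ is large.

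The main obstacle is the overlap term $\Delta_n=o(\mu_n^2)$. The plan is to compare each summand with $\mu_n^2$. Fix two copies sharing $t=t_1+t_2$ vertices, with $t_1$ on the left side and $t_2$ on the right. The count ratio is
\[
\frac{N_t}{N^2}\lesssim \binom{h}{t_1}\binom{h}{t_2}\frac{h^{t_1}h^{t_2}}{a_n^{t_1}b_n^{t_2}},
\]
because the second copy must reuse chosen vertices. The probability ratio is
\[
\frac{(M)_{2v-t}(n)_v^2}{(n)_{2v-t}(M)_v^2}\lesssim (n/M)^{t}(1+o(1)),
\]
which holds as long as $v^2=o(M)$. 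Hence
\[
\frac{\Delta_n}{\mu_n^2}\lesssim \sum_{t_1+t_2\ge1}\Bigl(\frac{h^2 n}{a_nM}\Bigr)^{t_1}\Bigl(\frac{h^2n}{b_nM}\Bigr)^{t_2}.
\]
Using $b_n\le\rho a_n$, we have $a_n\asymp b_n\asymp\sqrt{a_nb_n}$, so both ratios are $\lesssim \rho\,h^2 n/(\sqrt{a_nb_n}M)\lesssim \rho h^2/(\sqrt{C_3}\log^2 n)$. By the choice of the $\log^4 n$ factor, this is a small constant, and it can be made $o(1)$ if the budget is slightly inflated or $C_3\to\infty$ slowly.

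The delicate point is that there are $h^2$ terms $(t_1,t_2)$ together with the binomial factors $\binom{h}{t_i}\le h^{t_i}$. After absorbing those factors, the base becomes $h^3/(\sqrt{C_3}\log^2n)$, which is not small. If this bookkeeping fails, the fix is to shrink the witness size or take $\s{Q}_n$ larger by polylog factors. Summing the geometric series then gives $\Delta_n=o(\mu_n^2)$, and Theorem~\ref{thm:scan_upper} yields strong detection.
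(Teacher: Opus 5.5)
Your three parts follow the paper's route. Part (i) is the edge-hit bound, part (ii) is Theorem~\ref{thm:degree_cut_general_unsat_main} at $d=b_n$, and part (iii) is the scan test with a $K_{h,h}$ witness, $h\asymp\log n$, with overlaps split into left/right counts $(t_1,t_2)$ and summed as a geometric series. The computation you display for (iii) is correct.

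The problem is your last paragraph, which retracts that computation because of a miscount. Your base $h^2n/(a_nM)$ already contains the binomial factor, since $\binom{h}{t_1}(h/a_n)^{t_1}(n/M)^{t_1}\le\bigl(h^2n/(a_nM)\bigr)^{t_1}$. This is exactly the paper's bound $\binom{s}{x}\binom{a-s}{s-x}/\binom{a}{s}\le(es^2/a)^x$ combined with the falling-factorial ratio. There is no further factor of $h$, so $h^3$ is a double count. The number of pairs $(t_1,t_2)$ is also irrelevant, because for $r\le 1/4$
\[
\sum_{(t_1,t_2)\neq(0,0)} r^{t_1+t_2}\le (1-r)^{-2}-1\le 4r,
\]
uniformly in $h$. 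The $\log^4 n$ in the statement is there precisely so that $M\asymp nh^2/\sqrt{a_nb_n}$ absorbs this $h^2$. The paper takes $M_n=\lceil An s_n^2/\sqrt{a_nb_n}\rceil$, which makes the base at most $e\sqrt{\rho}/A$. Your proposed fallbacks would not help anyway. Shrinking $h$ is blocked by the null condition $M^{2h}q^{h^2}\to0$, which forces $h>2\log M/\log(1/q)$. Inflating $\s{Q}_n$ proves a weaker statement than (iii).

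Two residual issues remain, and the paper's proof shares both.

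(a) With $C_3$ fixed, your base is bounded only by a fixed constant $r\asymp\rho C^2/\sqrt{C_3}$. So you get $\Delta_n\le 4r\mu_n^2$ rather than $\Delta_n=o(\mu_n^2)$, and Lemma~\ref{lem:janson_hypergeom} bounds the Type-II error only by $\exp(-1/(4r+o(1)))$, which is small but not vanishing. Neither your ``$C_3\to\infty$ slowly'' nor the paper's ``take $A$ larger, hence $\Delta_n/\mu_n^2\to0$'' fixes this for a fixed constant. Indeed, at budget $\s{Q}_n\asymp C_3n^2\log^4 n/(a_nb_n)$ with $a_n\gg\log^2 n$, the $(t_1,t_2)=(1,0)$ term of $\Delta_n/\mu_n^2$ is genuinely of order $h^2n/(a_nM)$, a constant.

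(b) Your bound $M\gtrsim\sqrt{C_3}\,n\log^2 n/\sqrt{a_nb_n}$ is incompatible with $M\le n$ when $\sqrt{a_nb_n}\lesssim\sqrt{C_3}\log^2 n$. The hypotheses allow this, since only $a_n\ge C_0\log n$ is assumed. Likewise the paper's $M_n$ exceeds $n$ whenever $\sqrt{a_nb_n}<As_n^2$. In that regime $M=n$ and the base becomes $h^2/a_n$, which can diverge.

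Both issues disappear if you bypass Janson. Let $L,R$ be the images of the two sides. Every $L$--$R$ pair is planted, so $\s{G}_S$ contains $K_{h,h}$ as soon as $|S\cap L|\ge h$ and $|S\cap R|\ge h$. These counts are hypergeometric with means $a_nM/n\le b_nM/n$. If $M<n$, then $a_nM/n\gtrsim\sqrt{C_3/\rho}\,\log^2 n\gg h$; if $M=n$, then $a_nM/n=a_n\ge h$. Lemma~\ref{lem:ChernoffChv} then gives Type-II error $o(1)$, while the Type-I bound is unchanged. This argument even needs only $\s{Q}_n\gtrsim n^2\log^2 n/a_n^2$.
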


\begin{proof}[Proof of Corollary~\ref{prop:complete_bipartite_tradeoff}]
We prove the three parts separately.

\paragraph{Proof of (i).}
Since $|e(\Gamma_n)|=a_n b_n$, Theorem~\ref{thm:cover_reduction_query_lb} yields
\begin{align}
\s{Q}_n=o\p{\frac{n^2}{|e(\Gamma_n)|}}
=
o\p{\frac{n^2}{a_n b_n}}
\quad\Longrightarrow\quad
\text{weak detection is impossible}.
\end{align}
This proves \eqref{eq:biclique_lower_bound}.

\paragraph{Proof of (ii).}
Let us evaluate the parameter $\kappa(\Gamma_n)$ appearing in Theorem~\ref{thm:degree_cut_general_unsat_main}. In the complete bipartite graph $K_{a_n,b_n}$, the $a_n$ vertices on the left have degree $b_n$, while the $b_n$ vertices on the right have degree $a_n$. Since $a_n\le b_n$, choosing $d=b_n$ yields $m_{b_n}(\Gamma_n)=a_n$ and therefore
\begin{align}
m_{b_n}(\Gamma_n)b_n^2=a_n b_n^2.
\end{align}
Thus, if the feasibility conditions \eqref{eq:biclique_cut_feasibility} hold, Theorem~\ref{thm:degree_cut_general_unsat_main} implies that there exists $C_2=C_2(q)$ such that
\begin{align}
\s{Q}_n\ge C_2\frac{n^3}{a_n b_n^2}\log^2 n
\end{align}
implies strong detection. This proves \eqref{eq:biclique_cut_upper_bound}.

\paragraph{Proof of (iii).}
We use the witness-scan test with a complete bipartite witness. Let
\begin{align}
s_n\triangleq \ceil{L\log n},
\label{eq:biclique_witness_size}
\end{align}
where $L=L(q,\rho)$ is a sufficiently large constant to be chosen below, and let
\begin{align}
H_n\triangleq K_{s_n,s_n}.
\end{align}
By \eqref{eq:biclique_left_large}, if $C_0$ is chosen sufficiently large, then for all large enough $n$ we have $s_n\le a_n\le b_n$, and hence $H_n\subseteq \Gamma_n$.

Now consider Algorithm~\ref{alg:witness_scan} with witness $H_n$, and let $M_n$ be the sampled vertex-set size in the scan. We choose
\begin{align}
M_n
\triangleq
\ceil{A\frac{n s_n^2}{\sqrt{a_n b_n}}},
\label{eq:biclique_scan_M}
\end{align}
where $A=A(q,\rho)$ is a sufficiently large constant. Since the number of queried pairs is $\binom{M_n}{2}$, the corresponding query budget satisfies
\begin{align}
\s{Q}_n\asymp M_n^2
\asymp
A^2\frac{n^2 s_n^4}{a_n b_n}
\asymp
A^2\frac{n^2}{a_n b_n}\log^4 n.
\label{eq:biclique_scan_budget_equiv}
\end{align}
Thus it suffices to prove that the scan test succeeds with this choice of $M_n$.

We verify the three conditions following equation~(27) in the proof of the scan test.

\smallskip
\noindent
\emph{Type-I error bound.}
Let $v(H_n)=2s_n$ and $e(H_n)=s_n^2$. Under the null, the queried induced subgraph is distributed as $G(M_n,q)$. The proof of the scan test gives
\begin{align}
\pr_{\calH_0}(A_{\s{scan}}=1)
\le
|S_{H_n}|\,q^{e(H_n)},
\end{align}
where $|S_{H_n}|$ is the number of injective embeddings of $H_n$ into the queried $M_n$-vertex set. Since $|S_{H_n}|\le M_n^{2s_n}$,
\begin{align}
\pr_{\calH_0}(A_{\s{scan}}=1)
\le
M_n^{2s_n} q^{s_n^2}.
\label{eq:biclique_typeI}
\end{align}
Now, by \eqref{eq:biclique_scan_M} and \eqref{eq:biclique_ratio_condition},
\begin{align}
M_n
\le
A\frac{n s_n^2}{a_n}
\le
A\frac{n s_n^2}{L\log n}
\le
A n s_n,
\end{align}
for all sufficiently large $n$. Since $s_n=L\log n$, this implies that $\log M_n\le 2\log n$ for all large enough $n$. Therefore,
\begin{align}
\log\p{M_n^{2s_n}q^{s_n^2}}
&=
2s_n\log M_n - s_n^2\log\frac1q
\nonumber\\
&\le
4L(\log n)^2 - L^2(\log n)^2\log\frac1q.
\end{align}
If $L$ is chosen sufficiently large depending only on $q$, the right-hand side tends to $-\infty$. Hence
\begin{align}
M_n^{2s_n}q^{s_n^2}\to 0,
\end{align}
and so the Type-I error probability tends to $0$.

\smallskip
\noindent
\emph{Planted-hit lower bound.}
Let $N(H_n,\Gamma_n)$ denote the number of copies of $H_n$ inside $\Gamma_n$. Since $\Gamma_n=K_{a_n,b_n}$,
\begin{align}
N(H_n,\Gamma_n)=\binom{a_n}{s_n}\binom{b_n}{s_n}.
\end{align}
In the scan proof, the quantity
\begin{align}
\mu_n
=
N(H_n,\Gamma_n)\frac{(M_n)_{2s_n}}{(n)_{2s_n}}
\end{align}
controls the expected number of planted copies of $H_n$ fully contained in the queried set.

We first lower bound $N(H_n,\Gamma_n)$. Since
\begin{align}
\binom{a}{s}
=
\prod_{j=0}^{s-1}\frac{a-j}{s-j}
\ge
\p{\frac{a}{s}}^s
\qquad \forall\; a\ge s,
\end{align}
we obtain
\begin{align}
N(H_n,\Gamma_n)
\ge
\p{\frac{a_n}{s_n}}^{s_n}
\p{\frac{b_n}{s_n}}^{s_n}.
\label{eq:biclique_N_lower}
\end{align}

Next, we lower bound $(M_n)_{2s_n}/(n)_{2s_n}$. Since $M_n\ge 4s_n$ for all large enough $n$ by \eqref{eq:biclique_scan_M}, and since $j/M_n\le 1/2$ for $0\le j\le 2s_n-1$, we have
\begin{align}
\log\frac{(M_n)_{2s_n}}{M_n^{2s_n}}
=
\sum_{j=0}^{2s_n-1}\log\p{1-\frac{j}{M_n}}
\ge
-2\sum_{j=0}^{2s_n-1}\frac{j}{M_n}
=
-O\p{\frac{s_n^2}{M_n}}.
\end{align}
Similarly,
\begin{align}
\log\frac{(n)_{2s_n}}{n^{2s_n}}
=
\sum_{j=0}^{2s_n-1}\log\p{1-\frac{j}{n}}
=
-O\p{\frac{s_n^2}{n}}.
\end{align}
Therefore
\begin{align}
\frac{(M_n)_{2s_n}}{(n)_{2s_n}}
\ge
\exp\pp{-O\p{\frac{s_n^2}{M_n}}-O\p{\frac{s_n^2}{n}}}
\p{\frac{M_n}{n}}^{2s_n}.
\end{align}
By \eqref{eq:biclique_scan_M}, $M_n\asymp n s_n^2/\sqrt{a_n b_n}$, and thus $s_n^2/M_n=O(\sqrt{a_n b_n}/n)=o(1)$ and $s_n^2/n=o(1)$. Hence
\begin{align}
\frac{(M_n)_{2s_n}}{(n)_{2s_n}}
\ge
\frac12 \p{\frac{M_n}{n}}^{2s_n}
\label{eq:biclique_falling_ratio_lower}
\end{align}
for all sufficiently large $n$.

Combining \eqref{eq:biclique_N_lower} and \eqref{eq:biclique_falling_ratio_lower},
\begin{align}
\mu_n
&\ge
\frac12
\p{\frac{a_n}{s_n}}^{s_n}
\p{\frac{b_n}{s_n}}^{s_n}
\p{\frac{M_n}{n}}^{2s_n}
\nonumber\\
&=
\frac12
\p{\frac{a_n b_n M_n^2}{s_n^2 n^2}}^{s_n}.
\end{align}
Using \eqref{eq:biclique_scan_M},
\begin{align}
\frac{a_n b_n M_n^2}{s_n^2 n^2}
\asymp
A^2 s_n^2.
\end{align}
Therefore
\begin{align}
\mu_n
\ge
\frac12 (cA^2 s_n^2)^{s_n}
\to\infty,
\label{eq:biclique_mu_diverges}
\end{align}
for some absolute constant $c>0$, since $s_n\to\infty$.

\smallskip
\noindent
\emph{Overlap bound.}
We next upper bound the quantity $\Delta_n$ appearing in the scan proof. Let
\begin{align}
N_{x,y}(H_n,\Gamma_n)
\end{align}
denote the number of ordered pairs of copies of $H_n=K_{s_n,s_n}$ in $\Gamma_n=K_{a_n,b_n}$ whose left parts overlap in exactly $x$ vertices and whose right parts overlap in exactly $y$ vertices, where $0\le x,y\le s_n$ and $(x,y)\neq (0,0)$. Then, by grouping pairs of copies according to their overlap pattern,
\begin{align}
\Delta_n
\le
\sum_{\substack{0\le x,y\le s_n\\ (x,y)\neq(0,0)}}
N_{x,y}(H_n,\Gamma_n)
\frac{(M_n)_{4s_n-x-y}}{(n)_{4s_n-x-y}}.
\label{eq:biclique_Delta_expand}
\end{align}

We first control the combinatorial factor $N_{x,y}(H_n,\Gamma_n)$. Fix a first copy of $H_n$. To choose a second copy with overlap pattern $(x,y)$, we choose $x$ of the $s_n$ left vertices of the first copy that will be reused, then choose the remaining $s_n-x$ left vertices from the $a_n-s_n$ unused left vertices; similarly on the right. Thus
\begin{align}
N_{x,y}(H_n,\Gamma_n)
=
N(H_n,\Gamma_n)
\binom{s_n}{x}\binom{a_n-s_n}{s_n-x}
\binom{s_n}{y}\binom{b_n-s_n}{s_n-y}.
\end{align}
Dividing by $N(H_n,\Gamma_n)^2=\binom{a_n}{s_n}^2\binom{b_n}{s_n}^2$, we get
\begin{align}
\frac{N_{x,y}(H_n,\Gamma_n)}{N(H_n,\Gamma_n)^2}
=
\frac{\binom{s_n}{x}\binom{a_n-s_n}{s_n-x}}{\binom{a_n}{s_n}}
\cdot
\frac{\binom{s_n}{y}\binom{b_n-s_n}{s_n-y}}{\binom{b_n}{s_n}}.
\label{eq:biclique_ratio_exact}
\end{align}
For the first factor,
\begin{align}
\frac{\binom{s_n}{x}\binom{a_n-s_n}{s_n-x}}{\binom{a_n}{s_n}}
&\le
\binom{s_n}{x}\frac{\binom{a_n-x}{s_n-x}}{\binom{a_n}{s_n}}
\nonumber\\
&=
\binom{s_n}{x}\frac{(s_n)_x}{(a_n)_x}
\nonumber\\
&\le
\binom{s_n}{x}\p{\frac{s_n}{a_n}}^x
\nonumber\\
&\le
\p{\frac{e s_n^2}{a_n}}^x.
\end{align}
Similarly,
\begin{align}
\frac{\binom{s_n}{y}\binom{b_n-s_n}{s_n-y}}{\binom{b_n}{s_n}}
\le
\p{\frac{e s_n^2}{b_n}}^y.
\end{align}
Therefore
\begin{align}
\frac{N_{x,y}(H_n,\Gamma_n)}{N(H_n,\Gamma_n)^2}
\le
\p{\frac{e s_n^2}{a_n}}^x
\p{\frac{e s_n^2}{b_n}}^y.
\label{eq:biclique_ratio_bound}
\end{align}

Next, we control the ratio of falling factorial terms. Since $M_n\ge 8s_n$ for all large enough $n$, the same argument used above yields
\begin{align}
\frac{(M_n)_{4s_n-x-y}}{M_n^{4s_n-x-y}}
\le
\exp\pp{O\p{\frac{s_n^2}{M_n}}}
=
O(1),
\end{align}
and
\begin{align}
\frac{(n)_{2s_n}^2}{(n)_{4s_n-x-y}}
\le
O(1)\, n^{x+y},
\qquad
\frac{(M_n)_{4s_n-x-y}}{(M_n)_{2s_n}^2}
\le
O(1)\, M_n^{-(x+y)}.
\end{align}
Combining these estimates, we obtain
\begin{align}
\frac{(M_n)_{4s_n-x-y}}{(n)_{4s_n-x-y}}
\le
C\p{\frac{(M_n)_{2s_n}}{(n)_{2s_n}}}^2
\p{\frac{n}{M_n}}^{x+y},
\label{eq:biclique_factorial_ratio}
\end{align}
for some constant $C<\infty$ depending only on $q$ and $\rho$.

Substituting \eqref{eq:biclique_ratio_bound} and \eqref{eq:biclique_factorial_ratio} into \eqref{eq:biclique_Delta_expand}, and dividing by $\mu_n^2=N(H_n,\Gamma_n)^2((M_n)_{2s_n}/(n)_{2s_n})^2$, we get
\begin{align}
\frac{\Delta_n}{\mu_n^2}
&\le
C
\sum_{\substack{0\le x,y\le s_n\\ (x,y)\neq(0,0)}}
\p{\frac{e s_n^2 n}{a_n M_n}}^x
\p{\frac{e s_n^2 n}{b_n M_n}}^y.
\end{align}
Now, by \eqref{eq:biclique_scan_M},
\begin{align}
\frac{e s_n^2 n}{a_n M_n}
\asymp
\frac{e}{A}\sqrt{\frac{b_n}{a_n}},
\qquad
\frac{e s_n^2 n}{b_n M_n}
\asymp
\frac{e}{A}\sqrt{\frac{a_n}{b_n}}.
\end{align}
Since $a_n\le b_n\le \rho a_n$, both ratios are at most $c_\rho/A$ for a constant $c_\rho<\infty$ depending only on $\rho$. Choosing $A$ sufficiently large depending only on $\rho$, we may ensure that these ratios are both at most $1/4$. Then
\begin{align}
\frac{\Delta_n}{\mu_n^2}
\le
C
\sum_{\substack{x,y\ge 0\\ (x,y)\neq(0,0)}}
4^{-(x+y)}
=
O(1)
\sum_{t=1}^{\infty} (t+1)4^{-t}
<
\infty.
\end{align}
Moreover, by taking $A$ larger if necessary, the geometric ratio can be made arbitrarily small, and hence
\begin{align}
\frac{\Delta_n}{\mu_n^2}\to 0.
\label{eq:biclique_overlap_small}
\end{align}

We have thus verified the three scan conditions:
\begin{align}
|S_{H_n}|q^{e(H_n)}\to 0,
\qquad
\mu_n\to\infty,
\qquad
\Delta_n=o(\mu_n^2).
\end{align}
Therefore, by the proof of the witness-scan test, the scan test with witness $H_n=K_{s_n,s_n}$ succeeds. Finally, \eqref{eq:biclique_scan_budget_equiv} shows that this is achieved with query budget
\begin{align}
\s{Q}_n\asymp \frac{n^2}{a_n b_n}\log^4 n.
\end{align}
This proves \eqref{eq:biclique_scan_upper_bound}.
\end{proof}

\subsection{Proof of Corollary~\ref{prop:biclique_bounded_left_main}}

We prove Corollary~\ref{prop:biclique_bounded_left_main}, restating it here for convenience with additional details.

\begin{corollary}
\label{prop:biclique_bounded_left}
Fix $q\in(0,1)$ and let $\Gamma_n = K_{a_n,b_n}$ and $1\le a_n\le b_n$, where $a_n\le a_0$ for some absolute constant $a_0<\infty$ and all $n$. Then:
\begin{enumerate}
\item If
\begin{align}
\s{Q}_n=o\p{\frac{n^3}{b_n^2}},
\label{eq:biclique_bounded_left_lower}
\end{align}
then weak detection is impossible.
\item If the feasibility conditions of Theorem~\ref{thm:degree_cut_general_unsat_main} hold with $d=b_n$, then there exists a constant $C=C(q,a_0)$ such that
\begin{align}
\s{Q}_n\ge C\frac{n^3}{b_n^2}\log^2 n
\label{eq:biclique_bounded_left_upper}
\end{align}
implies strong detection.
\end{enumerate}
Consequently, in the regime $a_n=O(1)$, the lower and upper bounds match up to a factor of $\log^2 n$.
\end{corollary}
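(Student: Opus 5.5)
The lower bound (part 1) will be obtained from Corollary~\ref{cor:bounded_vertex_cover_weighted_main}, and the upper bound (part 2) from Theorem~\ref{thm:degree_cut_general_unsat_main} applied at the degree level $d=b_n$.

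\emph{Lower bound.} The left side of $K_{a_n,b_n}$, consisting of $a_n$ vertices, is a vertex cover. Hence $\tau(\Gamma_n)\le a_n\le a_0$, so $\Gamma_n$ is a bounded-cover sequence with $\tau_0=a_0$. The maximum degree is $\Delta_n=d_{\s{max}}(K_{a_n,b_n})=b_n$, because $a_n\le b_n$. Corollary~\ref{cor:bounded_vertex_cover_weighted_main} then gives that $\s{Q}_n=o(n^3/b_n^2)$ makes weak detection impossible.

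One could also argue directly via Theorem~\ref{thm:cover_reduction_query_lb}, taking $U_n$ to be the left side. In that case $H_n$ is edgeless, so the core condition holds trivially. Every right vertex has $N_n(w)=U_n$, which gives $\Theta_{r,n}=b_n^2\binom{a_n}{r}$ for $r\le a_n$. The remainder condition \eqref{eq:assump_weighted_simplified} is then at most $O_{q,a_0}(\s{Q}_n b_n^2/n^3)$, using $n-\tau_n\ge n-a_0\asymp n$. Both routes give the same bound, and I will cite the corollary.

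\emph{Upper bound.} The left vertices have degree $b_n$, so $m_{b_n}(\Gamma_n)\ge a_n\ge 1$. Assuming the feasibility conditions \eqref{eq:degcut_feas_unsat} at $d=b_n$, Theorem~\ref{thm:degree_cut_general_unsat_main} gives strong detection with $\s{Q}_{\s{cut}}\le C(q)\,\frac{n^3}{m_{b_n}b_n^2}\log^2 n\le C(q)\,\frac{n^3}{b_n^2}\log^2 n$, where the last step uses $m_{b_n}\ge 1$. The constant therefore does not even need to depend on $a_0$; if $a_n\ge 1$ is used more carefully, it depends at most on $a_0$.

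The one technical point is that the theorem is stated at the prescribed budget $Mm$. With a larger budget $\s{Q}_n$, we simply run the same test and ignore the extra queries. Comparing the two parts, the thresholds $n^3/b_n^2$ and $n^3b_n^{-2}\log^2 n$ differ by $\log^2 n$, which is the final claim. I do not expect a genuine obstacle here, since everything reduces to earlier results.
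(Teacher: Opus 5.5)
Your proposal is correct and follows the paper's proof: the lower bound comes from Corollary~\ref{cor:bounded_vertex_cover_weighted_main} with $\tau(\Gamma_n)\le a_0$ and $d_{\s{max}}(\Gamma_n)=b_n$, and the upper bound from Theorem~\ref{thm:degree_cut_general_unsat_main} at $d=b_n$. Your handling of the constant via $m_{b_n}(\Gamma_n)\ge 1$ runs in the right direction. The paper's chain instead uses $a_n\le a_0$, which bounds the required budget from below rather than above; this is not what the implication needs, though the conclusion is unaffected.
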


\begin{proof}[Proof of Corollary~\ref{prop:biclique_bounded_left}]
Since $a_n\le a_0$, the graph family $(\Gamma_n)$ has bounded vertex cover number $\tau(\Gamma_n)=a_n\le a_0$. Also, $d_{\max}(\Gamma_n)=b_n$. Therefore Corollary~\ref{cor:bounded_vertex_cover_weighted_main} applies directly and yields
\begin{align}
\s{Q}_n=o\p{\frac{n^3}{d_{\max}(\Gamma_n)^2}}
=
o\p{\frac{n^3}{b_n^2}}
\quad\Longrightarrow\quad
\text{weak detection is impossible}.
\end{align}
This proves \eqref{eq:biclique_bounded_left_lower}.

For the upper bound, note that in $K_{a_n,b_n}$ the $a_n$ vertices on the left all have degree $b_n$. Hence, taking $d=b_n$, we have
\begin{align}
m_{b_n}(\Gamma_n)=a_n.
\end{align}
Since $a_n\le a_0$, Theorem~\ref{thm:degree_cut_general_unsat_main} implies that, whenever the feasibility conditions hold at $d=b_n$, there exists a constant $C=C(q,a_0)$ such that
\begin{align}
\s{Q}_n
\ge
C\frac{n^3}{m_{b_n}(\Gamma_n)b_n^2}\log^2 n
\ge
\frac{C}{a_0}\frac{n^3}{b_n^2}\log^2 n
\end{align}
implies strong detection. Absorbing the factor $a_0^{-1}$ into the constant proves \eqref{eq:biclique_bounded_left_upper}. Therefore the lower and upper bounds differ by at most the $\log^2 n$ factor from Theorem~\ref{thm:degree_cut_general_unsat_main}.
\end{proof}

\bibliographystyle{alpha}
\bibliography{bibfile}
\appendix

\section{Preliminaries}
\subsection{Proof of Lemma~\ref{lem:janson_hypergeom}}\label{app:janson_hypergeom}

Fix $\lambda>0$. Since $Z\ge 0$,
\begin{align}
\pr(Z=0)=\pr(e^{-\lambda Z}=1)\leq \bE[e^{-\lambda Z}].
\label{eq:markov}
\end{align}
Write $Z=\sum_{c\in\calC} I_c$. Expand $e^{-\lambda Z}=\prod_{c\in\calC} e^{-\lambda I_c}$
and note $e^{-\lambda I_c}=1-(1-e^{-\lambda})I_c$.
Thus
\begin{align}
\bE[e^{-\lambda Z}]
=\bE\pp{\prod_{c\in\calC} \p{1-\theta I_c}},\qquad \theta\triangleq 1-e^{-\lambda}\in(0,1).
\label{eq:prod_form}
\end{align}

Now use the elementary inequality, valid for any $\theta\in(0,1)$ and any $\{0,1\}$-valued random variables:
\begin{align}
\prod_{c\in\calC} (1-\theta I_c)
\leq \exp\p{-\theta \sum_{c\in\calC} I_c + \theta^2 \sum_{c\sim c'\in\calC} I_c I_{c'}}.
\label{eq:exp_dom}
\end{align}
To see \eqref{eq:exp_dom}, take logs and use $\log(1-x)\leq -x$ for $x\in[0,1]$ to control the linear term, and then add back a quadratic correction over dependent pairs; one may verify \eqref{eq:exp_dom} by expanding the product and bounding mixed terms by the sum over overlapping pairs. Taking expectations and applying Jensen's inequality to the convex map $x\mapsto e^{x}$ yields
\begin{align}
\bE[e^{-\lambda Z}]
&\leq \bE\pp{\exp\p{-\theta Z + \theta^2 \sum_{c\sim c'\in\calC} I_c I_{c'}}}\nonumber\\
&\leq \exp\p{-\theta \bE[Z] + \theta^2 \sum_{c\sim c'\in\calC} \bE[I_c I_{c'}]}
= \exp\p{-\theta\mu + \theta^2 \Delta}.
\label{eq:mgf_bound}
\end{align}
Combining \eqref{eq:markov} and \eqref{eq:mgf_bound},
\begin{align}
\pr(Z=0)\leq \exp(-\theta\mu + \theta^2\Delta).
\end{align}
Optimize over $\theta\in(0,1)$ (equivalently over $\lambda>0$). The quadratic function $-\theta\mu+\theta^2\Delta$ is minimized at $\theta^\star=\min\{1,\mu/(2\Delta)\}$ if $\Delta>0$. A slightly sharper (and standard) optimization yields the bound \eqref{eq:janson_form}: take $\theta = \mu/(\mu+\Delta)\in(0,1)$ to get
\begin{align}
-\theta\mu+\theta^2\Delta
=-\frac{\mu^2}{\mu+\Delta}+\frac{\mu^2\Delta}{(\mu+\Delta)^2}
=-\frac{\mu^2}{\mu+\Delta}\cdot \frac{\mu}{\mu+\Delta}
\leq -\frac{\mu^2}{\mu+\Delta},
\end{align}
and hence \eqref{eq:janson_form}.

\subsection{Auxiliary lemmata}

Recall the following standard concentration inequalities

\begin{lemma}\label{lem:Bernstein}
    Let. $X\sim\s{Binomial}(m,q)$. For any $t\ge 0$
\begin{align}
\pr(X\ge qm+t)\leq \exp\p{-\frac{t^2}{2(qm+t/3)}}.
\label{eq:bernstein_bin_degcut}
\end{align}
\end{lemma}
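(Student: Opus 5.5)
The plan is to use the standard Chernoff (exponential-moment) method, together with the Poisson-type domination of the binomial moment generating function. This route gives the variance proxy $qm$ directly (rather than $q(1-q)m$), which is exactly the form stated in \eqref{eq:bernstein_bin_degcut}. Write $X=\sum_{i=1}^m Y_i$ with $Y_i\stackrel{\s{i.i.d.}}{\sim}\s{Bern}(q)$, and set $\nu\triangleq qm$. If $\nu=0$ or $t=0$ the claim is trivial, so assume $\nu,t>0$.

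\emph{Step 1 (MGF bound).} For every $\lambda\ge 0$, use $1+x\le e^x$ to get
\begin{align}
\bE[e^{\lambda Y_i}]=1+q(e^\lambda-1)\le \exp\p{q(e^\lambda-1)},
\end{align}
and hence, by independence, $\bE[e^{\lambda X}]\le \exp\p{\nu(e^\lambda-1)}$.

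\emph{Step 2 (Markov and optimization).} Markov's inequality gives
\begin{align}
\pr(X\ge \nu+t)\le \exp\p{\nu(e^\lambda-1)-\lambda(\nu+t)}.
\end{align}
Choose $\lambda=\log(1+u)$ with $u\triangleq t/\nu$. This yields
\begin{align}
\pr(X\ge \nu+t)\le \exp\p{-\nu h(u)},\qquad h(u)\triangleq (1+u)\log(1+u)-u,
\end{align}
which is Bennett's inequality in Poisson form.

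\emph{Step 3 (elementary inequality).} It remains to show
\begin{align}
h(u)\ge \frac{u^2}{2(1+u/3)}\qquad\text{for all } u\ge 0.
\end{align}
Granting this, $\nu h(t/\nu)\ge \frac{t^2}{2(\nu+t/3)}$, which is the claim. I expect this calculus inequality to be the only step requiring care. Let $g(u)\triangleq h(u)-\frac{u^2}{2(1+u/3)}$. Then $g(0)=0$ and $g'(0)=0$, since $h'(u)=\log(1+u)$. I would compute the second derivatives, $h''(u)=\frac{1}{1+u}$ and
\begin{align}
\frac{d^2}{du^2}\,\frac{u^2}{2(1+u/3)}=\frac{1}{(1+u/3)^3},
\end{align}
so that $g''(u)\ge 0$ reduces to $(1+u/3)^3\ge 1+u$. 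This last inequality holds by expanding the cube, since $(1+u/3)^3=1+u+u^2/3+u^3/27$. Integrating $g''\ge 0$ twice from $0$ then gives $g\ge 0$. As a fallback, if the second-derivative computation were messy, I would compare Taylor series instead: $h(u)=\sum_{k\ge2}\frac{(-1)^k u^k}{k(k-1)}$ for $u<1$. That comparison only covers small $u$, however, so the convexity argument above is the primary route.
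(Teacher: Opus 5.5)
Your proof is correct and complete. The MGF domination $1+q(e^\lambda-1)\le e^{q(e^\lambda-1)}$ is valid. The choice $\lambda=\log(1+t/\nu)$ does give $\exp(-\nu h(t/\nu))$. The comparison $h(u)\ge u^2/(2(1+u/3))$ also holds: $g(0)=g'(0)=0$ and
\[
g''(u)=\frac{1}{1+u}-\frac{1}{(1+u/3)^3}\ge 0,
\]
since $u^2/(2(1+u/3))=\tfrac32\bigl(u-3+\tfrac{9}{3+u}\bigr)$ has second derivative $27/(3+u)^3$. Your rescaling back to $t^2/(2(\nu+t/3))$ and your handling of the degenerate cases are also fine.

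The paper does not prove this lemma. It simply ``recalls'' it as a standard concentration inequality under the name Bernstein. The natural route implied by that label is the classical Bernstein inequality for independent bounded summands. Applied with total variance $mq(1-q)\le mq$ and centered increments $Y_i-q\le 1-q\le 1$, it gives the stated bound after weakening the denominator.

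Your Chernoff--Bennett argument differs in two ways:
\begin{itemize}
\item It reaches the variance proxy $qm$ directly from the Poisson-type MGF bound, without passing through the sharper $q(1-q)m$ form.
\item It is fully self-contained. It is essentially the textbook proof of the Chernoff bound in exactly this form (e.g.\ Janson, {\L}uczak and Ruci\'nski).
\end{itemize}
The Taylor-series fallback you mention is unnecessary, since the convexity argument already covers all $u\ge 0$.
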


\begin{lemma}\label{lem:ChernoffChv}
Let $H\sim\s{Hypergeometric}(N,K,n)$ with mean $\mu=nK/N$. For any $\delta\in(0,1)$
\begin{align}
\pr(H\leq (1-\delta)\mu)\leq \exp\p{-\frac{\delta^2}{2}\mu}.
\label{eq:hyp_lower_degcut}
\end{align}
\end{lemma}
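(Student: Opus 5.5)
The last statement is Lemma~\ref{lem:ChernoffChv}, the lower-tail Chernoff bound for a hypergeometric variable, $\pr(H\le(1-\delta)\mu)\le \exp(-\delta^2\mu/2)$ with $\mu=nK/N$. The plan is to reduce it to the corresponding binomial bound through Hoeffding's convex-ordering comparison between sampling without and with replacement, and then apply the exponential-moment method.

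Let $X\sim\s{Binomial}(n,K/N)$, the count of marked items when drawing $n$ times with replacement from an urn of $N$ items, $K$ of them marked. Hoeffding's classical result states that $\bE[f(H)]\le\bE[f(X)]$ for every continuous convex $f$. It can be proved by realizing $H$ as a conditional expectation of $X$ given an appropriate symmetrized $\sigma$-field and then applying Jensen's inequality. We take this as a standard fact.

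Apply it with $f(x)=e^{-\lambda x}$ for $\lambda>0$, which gives $\bE[e^{-\lambda H}]\le\bE[e^{-\lambda X}]=(1-p+pe^{-\lambda})^n$, where $p=K/N$. By Markov's inequality,
\begin{align}
\pr(H\le(1-\delta)\mu)\le e^{\lambda(1-\delta)\mu}\,(1-p+pe^{-\lambda})^n\le \exp\p{\lambda(1-\delta)\mu+\mu(e^{-\lambda}-1)},
\end{align}
where the second step uses $1+x\le e^x$. Choosing $\lambda=-\log(1-\delta)$ yields the bound
\begin{align}
\exp\p{-\mu\,[\delta+(1-\delta)\log(1-\delta)]}.
\end{align}

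It remains to show $g(\delta)\triangleq\delta+(1-\delta)\log(1-\delta)\ge\delta^2/2$ on $(0,1)$. We have $g(0)=g'(0)=0$, and $g''(\delta)=1/(1-\delta)\ge1$. Integrating twice gives $g(\delta)\ge\delta^2/2$, which completes the argument.

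The only nontrivial ingredient is the convex-ordering comparison. If one wants to avoid citing Hoeffding, an alternative route is the negative association of the indicators for sampling without replacement; this yields the product bound on $\bE\big[\prod_i e^{-\lambda \xi_i}\big]$ directly. I would cite Hoeffding (1963) rather than reprove it, since everything else is a routine calculation.
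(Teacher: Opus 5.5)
Your proof is correct: Hoeffding's convex-order comparison gives $\mathbb{E}[e^{-\lambda H}]\le \mathbb{E}[e^{-\lambda X}]$ with $X\sim\mathrm{Binomial}(n,K/N)$, and combining this with the choice $\lambda=-\log(1-\delta)$ and the inequality $\delta+(1-\delta)\log(1-\delta)\ge \delta^2/2$ (from $g(0)=g'(0)=0$, $g''\ge 1$) yields exactly the stated bound. The paper gives no proof of this lemma at all, only recalling it as a standard concentration inequality, so your argument supplies the standard derivation that the citation implicitly relies on. The negative-association variant you mention would also work, and it is the same negative-dependence tool the paper invokes for its hypergeometric Janson inequality.
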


\end{document}